\def\eqbd{\mathop{{:}{=}}}
\theoremstyle{definition}
\newtheorem{definition}{Definition}
\newtheorem{lemma}[definition]{Lemma}
\newtheorem{theorem}[definition]{Theorem}
\newtheorem{proposition}[definition]{Proposition}
\newtheorem{corollary}[definition]{Corollary}
\newenvironment{@abssec}[1]{%
     \if@twocolumn
       \section*{#1}%
     \else
       \vspace{.05in}\footnotesize
       \parindent .2in
         {\bfseries #1. }\ignorespaces
     \fi}
     {\if@twocolumn\else\par\vspace{.1in}\fi}
\newenvironment{keywords}{\begin{@abssec}{Key words}}{\end{@abssec}}
\newenvironment{AMS}{\begin{@abssec}{AMS subject classification}}{\end{@abssec}}
\newcommand{\ang}[1]{\langle\! \langle#1\rangle\! \rangle}
\DeclareMathOperator{\tr}{tr}
\begin{document}

\title{Szeg\H{o}'s theorem for matrix orthogonal polynomials}


\author{Maxim Derevyagin\footnotemark[1], Olga Holtz
\footnotemark[1] \footnotemark[2] ,
Sergey Khrushchev\footnotemark[3], and Mikhail 
Tyaglov\footnotemark[1]}

\renewcommand{\thefootnote}{\fnsymbol{footnote}}

\footnotetext[1]{
Department of Mathematics MA 4-5,
Technische Universit\"at Berlin,
Strasse des 17. Juni 136,
D-10623 Berlin,
Germany.}
\footnotetext[2]{
Department of Mathematics,
  University of California-Berkeley,
      821 Evans Hall,
      Berkeley, California, 94720.
Telephone: +1 510 642 2122 
Fax: +1 510 642 8204
Email: holtz@math.berkeley.edu
}
\footnotetext[3]{
Department of Mathematics
Kazakh-British University
Tole bi 59, 050000
Almaty, Kazakhstan.
}



\date{\small December 9, 2011}
\maketitle

\begin{keywords} 
Matrix orthogonal polynomials, operator functions, Szeg\H{o}'s theory.
\end{keywords}

\begin{AMS} 
42C05, 30C10, 62M10, 60G25, 46G10, 46G25, 15A54, 15A16, 15A45.
\end{AMS}

\begin{abstract}
We  extend some classical theorems in the theory of orthogonal polynomials on the unit circle to the matrix case.
In particular, we prove a matrix analogue of Szeg\H{o}'s theorem.
As a by-product, we also obtain an elementary proof of
the distance formula by Helson and Lowdenslager.\end{abstract}
\maketitle

\section{Historical background and motivation}

The classical work~\cite{Szego} of Szeg\H{o} was the first to address the asymptotics
of orthogonal polynomials on the unit circle $\mathbb{T}$ under the assumption that
the entropy of the underlying measure $\sigma$ is finite, i.e.,
$$ \int_{\mathbb T} \log \sigma' {d\theta \over 2\pi} > - \infty. $$
Further aspects of Szeg\H{o}'s theory were developed by Geronimus, Verblunsky and 
 others, which led to a number of other formulas, in various setups, involving 
the entropy such as the formula of Helson-Lowdenslager \cite{HelsonLowdenslager} 
for multivariate random processes (for a historical account, see \cite[\S 1.1]{SimonI}).

Verblunsky \cite[formulas (v) and (vi)]{Verblunsky} showed that, for any probability measure
$\sigma$ on the unit circle ${\mathbb T}$,
\begin{equation}\label{GeronimSzego}
\lim_n\prod_{k=0}^n(1-|a_k|^2)=\exp\int_{\mathbb{T}}\log\sigma^{\,\prime}\,\frac{d\theta}{2\pi}.
\end{equation}
Here $\{a_k\}_{k\geq 0}$ is a sequence of points in the unit disc $\mathbb{D}$ called the \textit{parameters} of $\sigma$ \cite[\S 8.3]{Khrushchev} and $\sigma^{\,\prime}=2\pi d\sigma/d\theta$ is the Lebesgue derivative of $\sigma$. The numbers $\{a_k\}_{k\geq 0}$ have different names depending on the area where they are considered. In the theory of orthogonal polynomials they are known as the {\em Szeg\H{o} recurrence coefficients, Verblunsky parameters, Geronimus parameters,\/} in Schur's theory they are {\em Schur's parameters,\/} in inverse scattering problems they are {\em reflection coefficients,\/} see \cite[\S 1.1]{SimonI}.

In the matrix setting,  $\mathbf{\sigma}$ is a Borel measure on $\mathbb{T}$ with values in the
set ${\mathcal M}_\ell^{+}$ of all nonnegative definite matrices in ${\mathcal M}_\ell$,
the set of all $\ell\times \ell$ matrices with complex entries. We denote by 
$\textsf{P}_{\ell}(\mathbb{T})$ the set of all matrix-valued
nonnegative measures $\sigma$ on $\mathbb{T}$ that are normalized, i.e.,
$$\sigma(\mathbb{T})=\mathbf{1}$$ to the unit matrix $\mathbf{1}$ 
 in ${\mathcal M}_\ell^{+}$. We refer to $\textsf{P}_{\ell}(\mathbb{T})$ 
as the class of {\em matrix probability measures}.

The matrix case is important in multivariate Time Series and Prediction Theory \cite{Hannan,HelsonLowdenslager,Masani,Rozanov,Wiener}. As far as we know, 
the first Szeg\H{o}-type results  on matrix-valued orthogonal polynomials
were obtained by Delsarte, Genin and Kamp \cite{DGK:78}; this line of research
was continued by Aptekarev and Nikishin~\cite{AN:83}.

Our method is a combination of the recent theory of matrix orthogonal polynomials presented in \cite{DPS:08} and the approach to Szeg\H{o}'s theory developed in \cite{Khrushchev2001, Khrushchev}. This combination allows us to avoid using factorization theory in matrix Hardy classes. Instead, we use only methods of Real Analysis and Matrix/Operator Theory.

Our main goal is to respond to the following challenge
of Damanik, Pushnitski, and Simon~\cite{DPS:08}:

{\it Among the deepest and most elegant methods in OPUC 
are those of Khrushchev [125, 126, 101].
We have not been able to extend them to MOPUC! 
We regard their extension as an important
open question...}
 
Below we provide a full matrix-valued version of Szeg\H{o}'s theorem, yielding
the previously known trace versions  as corollaries of our matrix formula.

Throughout the paper, we mostly follow the notation and terminology of~\cite{DPS:08}.

\section{Main results}

In the matrix case, the parameters $\alpha_k$ are matrices in ${\mathcal M}_\ell$ with norms $\Vert{\alpha}_k\Vert$  not exceeding $1$. Here $\Vert{\alpha}\Vert$ is the norm of the linear operator  defined by the matrix $\alpha$ subordinate to the usual Euclidean vector norm ($2$-norm)  on ${\mathbb C}^\ell$. This operator norm is also known as the {\em spectral\/} or the {\em Euclidean\/} norm.
This norm is well known to equal the largest singular value of the matrix $\alpha$;
in particular, if $\alpha$ is self-adjoint, the norm $\Vert{\alpha}\Vert$ equals
the spectral radius of $\alpha$.

 We denote by
$\alpha^\dagger$ the Hermitian conjugate of $\alpha\in{\mathcal M}_\ell$. The symbol  $\,^*\,$ is reserved for the
Szeg\H{o} dual, so we do not use it for the adjoint (see \eqref{reverseP}).

We assume that  the matrix
\begin{equation}\label{matrixpoly}
\int_{\mathbb{T}}p(e^{i\theta})^\dagger d\sigma(\theta)p(e^{i\theta})
\end{equation}
is positive (definite) for any polynomial $p$ with coefficients in ${\mathcal M}_\ell$.
 Condition~\eqref{matrixpoly} is equivalent to the requirement that $d\sigma$ has full rank $\ell$ for infinitely many points in $\mathbb{T}$. Under this condition, the right (left) orthogonal matrix polynomials $\varphi_n^R$ ($\varphi_n^L$) are uniquely determined by the standard Gram-Schmidt orthonormalization. It is important to notice that the left orthogonal matrix polynomials are obtained with respect to the left quadratic `form':
\begin{equation}\label{matrixpoly2int}
\int_{\mathbb{T}}p(e^{i\theta}) d\sigma(\theta)p(e^{i\theta})^\dagger .
\end{equation}
 Every $\sigma\in\textsf{P}_{\ell}(\mathbb{T})$ is uniquely determined by the sequence of its parameters $\{\alpha_k\}_{k\geq 0}$. These parameters are contractive matrices in ${\mathcal M}_\ell$.  If $\sigma$ is a matrix-valued measure with parameters $\{\alpha_k\}_{k\geq 0}$, then the parameters $\{\alpha_k^\dagger\}_{k\geq 0}$ correspond to the measure $\overline{\sigma}$ such that
 \begin{equation*}
 \overline{\sigma}(E)=\sigma(\overline{E}),\quad \overline{E}=\{\overline{z}:z\in E\},
 \end{equation*}
for any Borel set $E$, where $\overline{z}$ stands for the complex conjugate of a complex number $z$. We write $\varphi_n(z,\sigma)$ for the
orthogonal polynomials  if the dependence on $\sigma$ is important.

For a matrix polynomial $P_n$ of degree $n$, we define the {\em reversed\/}
 (or {\em Szeg\H{o} dual)\/}  polynomial $P_n^*$ by
\begin{equation} \label{reverseP}
P_n^*(z) = z^n P_n(1/\bar{z})^\dagger.
\end{equation}
The relationship between the left orthogonal polynomials ${\varphi}_n^L$ and the right orthogonal polynomials ${\varphi}_n^R$ is given by the formula
\begin{equation}\label{lefttoright45}
{\varphi}_n^L(e^{i\theta},\overline{\sigma}){=}{\varphi}_n^R(e^{-i\theta},\sigma)^\dagger
\end{equation}
(see Corollary \ref{eqnumberfive8}).
The $n$th left normalized orthogonal polynomial $\varphi_n^L(z,\overline{\sigma})$  depends on the parameters $\alpha_0^\dagger$, $\alpha_1^\dagger$, $\ldots$, $\alpha_{n-1}^\dagger$. Hence, the $n$th right polynomial $\varphi_n^R(z,\sigma)$ can be obtained from the left $\varphi_n^L(z,{\sigma})$ by replacing each $\alpha_k^\dagger$  by $\alpha_k$, replacing $z\in\mathbb{T}$ by $\overline{z}$ and applying the conjugation $\dagger$.

The main result of this paper is the following theorem. \bigskip

\noindent 
{\bf Theorem.\/} 
 Every matrix probability measure $\sigma\in\textsf{P}_{\ell}(\mathbb{T})$ satisfies
the following matrix equality:
\begin{equation}\label{theMainFormula}
\lim_{n\rightarrow +\infty}\int_{0}^{2\pi}\log([\varphi_n^{R,*} (e^{i\theta})^\dagger \varphi_n^{R,*} (e^{i
\theta})]^{-1})\frac{d\theta}{2\pi}=\int_{\mathbb{T}}\log\sigma^{\,\prime}\,\frac{d\theta}{2\pi}~.
\end{equation} \bigskip

\noindent
 If the parameters $\alpha_k$ of $\sigma$ form a family of commuting normal matrices, then
\eqref{theMainFormula} can be simplified to
 \begin{equation}\label{commutcase}
 \prod_{k=0}^{\infty}\left({\bf 1} -\alpha_k \alpha_k^\dagger\right)=\exp\int_{\mathbb{T}}\log\sigma^{\,\prime}\,\frac{d\theta}{2\pi}.
 \end{equation}

 \vskip 1mm\noindent
{\bf Remark.} Alternatively, the commuting case reduces to the diagonal and hence to the scalar case.
\vskip 2mm

 Regardless of the normality or commutativity of $\{ \alpha_k \}$, the following determinantal-trace version \cite{DGK:78} follows from~(\ref{theMainFormula}):
 \begin{equation}\label{traceclass}
 \prod_{k=0}^{\infty}\det\left({\bf 1} -\alpha_k \alpha_k^\dagger\right)=\exp\int_{\mathbb{T}}\text{tr}\log\sigma^{\,\prime}\,\frac{d\theta}{2\pi}.
 \end{equation}
The symmetry $z\mapsto \overline{z}$ keeps the Lebesgue measure on $\mathbb{T}$ invariant.
Hence, combining \eqref{lefttoright45} and a simple formula
 \begin{equation*}
\int_{\mathbb{T}}\log\overline{\sigma}^{\,\prime}\,\frac{d\theta}{2\pi}=\int_{\mathbb{T}}\log\sigma^{\,\prime}\,\frac{d\theta}{2\pi},
\end{equation*}
 we also obtain the left version of~(\ref{theMainFormula}):
\begin{equation}\label{LeftFormula4}
\lim_{n\rightarrow +\infty}\int_{0}^{2\pi}\log([\varphi_n^{L,*} (e^{i\theta}) \varphi_n^{L,*} (e^{i
\theta})^\dagger]^{-1})\frac{d\theta}{2\pi}=\int_{\mathbb{T}}\log\sigma^{\,\prime}\,\frac{d\theta}{2\pi}.
\end{equation}

\section{Matrix preliminaries}

Recall that we denote by ${\mathcal M}_\ell$ the ring of all $\ell\times \ell$ complex-valued
matrices, its identity matrix by ${\bf 1}$ and its zero matrix by ${\bf 0}$.
Along with the Euclidean norm $\| \cdot \|$ on ${\mathcal M}_\ell$,
we also consider the trace norm $\pmb{|\!\!|}\alpha\pmb{|\!\!|}_1=\text{tr}(\alpha^\dagger\alpha)^{1/2}$ and the Hilbert-Schmidt norm $\pmb{|\!\!|}\alpha\pmb{|\!\!|}_2=(\text{tr}(\alpha^\dagger\alpha))^{1/2}$. It is easy to see that
\begin{equation}\label{ineqfornorms}
\Vert{\alpha}\Vert\leq\pmb{|\!\!|}\alpha\pmb{|\!\!|}_2\leq \pmb{|\!\!|}\alpha\pmb{|\!\!|}_1\leq \ell\Vert{\alpha}\Vert.
\end{equation}

We say that a self-adjoint matrix $A(=A^\dagger) \in {\mathcal M}_\ell$ is {\em nonnegative} ({\em positive}) if the
corresponding quadratic form $x\mapsto x^\dagger A x$ is nonnegative definite (positive definite).
We denote the class of all nonnegative self-adjoint $\ell\times\ell$ matrices by
${\mathcal M}^+_\ell$. The corresponding
partial order is known as the {\em Loewner ordering\/} and is denoted by~$\succ$: $A\succ B$ means that $A-B$ is positive, i.e., $A-B\succ {\mathbf 0}$, and $A\succeq B$ means that $A-B\succeq {\mathbf 0}$, or
$A-B\in{\mathcal M}^+_\ell$. 

Here is the
first fact about the Loewner ordering that we will use later:

\begin{lemma}\label{triv87} Let ${\bf 0}\preceq A_j\preceq B_j$ for $j=1,\ldots k$. Then
$
{\bf 0}\preceq A_1+\cdots+A_k\preceq B_1+\cdots+B_k.
$
\end{lemma}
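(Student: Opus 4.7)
The plan is to reduce the matrix inequality to the scalar inequality between the associated quadratic forms, since the Loewner ordering is defined pointwise through these forms. By the definition given just before the lemma, $A \succeq B$ is equivalent to $x^\dagger (A-B) x \geq 0$ for every $x \in \mathbb{C}^\ell$, so I only need to verify two scalar inequalities for each fixed vector $x$.

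Concretely, I would first fix an arbitrary $x \in \mathbb{C}^\ell$ and apply the hypothesis ${\bf 0}\preceq A_j\preceq B_j$ to produce the two chains of scalar inequalities $0 \leq x^\dagger A_j x$ and $x^\dagger A_j x \leq x^\dagger B_j x$ for $j = 1, \ldots, k$. Summing each chain over $j$ and using linearity of the map $M \mapsto x^\dagger M x$ yields
\[
0 \;\leq\; x^\dagger (A_1+\cdots+A_k) x \;\leq\; x^\dagger (B_1+\cdots+B_k) x.
\]
Since $x$ was arbitrary, translating back through the definition of $\succeq$ gives ${\bf 0} \preceq A_1+\cdots+A_k \preceq B_1+\cdots+B_k$, which is exactly the conclusion. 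Equivalently, one can phrase this as the statement that $\mathcal{M}_\ell^+$ is a convex cone closed under addition, so $A_j - {\bf 0} \in \mathcal{M}_\ell^+$ and $B_j - A_j \in \mathcal{M}_\ell^+$ for all $j$ imply that their respective sums $\sum_j A_j$ and $\sum_j (B_j - A_j)$ both lie in $\mathcal{M}_\ell^+$.

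There is no real obstacle here; the lemma is a direct consequence of the definition, and the only thing to be careful about is that the Loewner order is a genuine partial order on self-adjoint matrices (which is implicit in the setup of the previous paragraph). A short induction on $k$ starting from the base case $k=2$ would also work, but the one-shot argument via quadratic forms is cleaner and avoids any inductive bookkeeping.
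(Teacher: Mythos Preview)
Your proof is correct and follows exactly the approach the paper takes: the paper's one-line proof simply says ``Evaluate and compare the quadratic forms of both sums,'' which is precisely what you have written out in detail.
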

\begin{proof} Evaluate and compare the quadratic forms of both sums.
\end{proof}

We will also need the following result connecting
traces of self-adjoint matrices and their Loewner ordering:

\begin{lemma}\label{triv88}
Suppose $A\succeq B$ and $\tr A= \tr B$. Then $A=B$.
\end{lemma}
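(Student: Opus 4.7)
The plan is to reduce to showing that a positive semidefinite matrix with zero trace must vanish. Concretely, I would set $C \eqbd A-B$. By hypothesis $A\succeq B$, so $C\in\mathcal{M}_\ell^+$, and by the trace hypothesis $\tr C=\tr A-\tr B=0$. Thus it suffices to prove that any $C\in\mathcal{M}_\ell^+$ with $\tr C=0$ is the zero matrix.

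For this, I would invoke the spectral theorem. Because $C$ is self-adjoint, it is unitarily diagonalizable, and because $C\succeq \mathbf{0}$, all its eigenvalues $\lambda_1,\ldots,\lambda_\ell$ are nonnegative real numbers. Since the trace is unitarily invariant and equals the sum of eigenvalues, $\sum_{i=1}^\ell \lambda_i = \tr C = 0$. A sum of nonnegative reals is zero only if each summand is zero, so $\lambda_i=0$ for all $i$. A self-adjoint matrix whose spectrum is $\{0\}$ is unitarily similar to (hence equal to) the zero matrix, giving $C=\mathbf{0}$ and therefore $A=B$.

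There is no real obstacle here; the only thing to be careful about is using the correct notion of positivity used in the paper (the quadratic form $x\mapsto x^\dagger C x$ being nonnegative), which is precisely equivalent to the eigenvalues being nonnegative for a self-adjoint matrix via the spectral theorem. Alternatively, one could avoid the spectral theorem entirely: pick an orthonormal eigenbasis and observe $0=\tr C=\sum_i e_i^\dagger C e_i$ with each $e_i^\dagger C e_i\geq 0$, so each term vanishes; then by Cauchy--Schwarz for the semi-inner product $\langle x,y\rangle_C \eqbd x^\dagger C y$, we get $Ce_i=0$ for every $i$, whence $C=\mathbf{0}$.
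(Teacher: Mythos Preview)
Your proof is correct and follows essentially the same reduction as the paper: set $C=A-B$ and show that a nonnegative matrix with zero trace vanishes. The paper argues via the standard basis, writing $\tr C=\sum_j e_j^\dagger C e_j$ with each summand nonnegative (your alternative argument, with the standard basis in place of an eigenbasis); your primary spectral-theorem route is an equally valid minor variant.
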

\begin{proof} By the linearity of traces, this is equalent
to the statement: Suppose $A\succeq {\bf 0}$ and $\tr A=0$. Then
$A={\bf 0}$. The latter follows from the fact that $\tr A = \sum_{j=1}^\ell
e_j^\dagger A e_j$, so if the trace of $A$ is zero, the action of $A$
on all standard unit vectors (hence on the entire space)
must be trivial.
\end{proof}

Another fact about traces we will need is the following:

\begin{lemma}\label{triv89}
 If $A\succ {\bf 0}$, then
$
\log \det(A)=\text{tr}(\log A).
$
\end{lemma}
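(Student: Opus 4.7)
The plan is to reduce to the scalar case by diagonalizing $A$ via the spectral theorem. Since $A$ is self-adjoint and positive definite, I would write $A = U D U^\dagger$ with $U$ unitary and $D = \operatorname{diag}(\lambda_1,\ldots,\lambda_\ell)$ where $\lambda_j > 0$ are the eigenvalues of $A$ (this is the step that uses $A \succ \mathbf{0}$, ensuring each $\lambda_j$ lies in the domain of $\log$).

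Next, I would invoke the functional calculus for self-adjoint matrices to define $\log A \eqbd U (\log D) U^\dagger$, where $\log D = \operatorname{diag}(\log \lambda_1, \ldots, \log \lambda_\ell)$. With this definition in hand the identity becomes a matter of comparing two scalar expressions: on the one hand,
\begin{equation*}
\log \det(A) = \log \det(U D U^\dagger) = \log \det(D) = \sum_{j=1}^\ell \log \lambda_j,
\end{equation*}
using $\det U \cdot \det U^\dagger = 1$, and on the other hand, by the cyclic invariance of the trace,
\begin{equation*}
\operatorname{tr}(\log A) = \operatorname{tr}(U (\log D) U^\dagger) = \operatorname{tr}(\log D) = \sum_{j=1}^\ell \log \lambda_j.
\end{equation*}
Equality of the two right-hand sides then yields the result.

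There is no real obstacle here; the only point that deserves attention is making sure that the definition of $\log A$ is the one intended by the paper (spectral functional calculus) and that it is independent of the choice of unitary $U$, which follows because $\log$ depends only on the eigenvalues and the spectral projectors of $A$. If the authors prefer an approach not using the spectral theorem, one could alternatively argue via the power series $\log A = \sum_{k\geq 1} (-1)^{k+1}(A-\mathbf 1)^k/k$ after rescaling $A$ to have spectrum in a neighborhood of $1$, but the diagonalization route is cleaner and more direct.
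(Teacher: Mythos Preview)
Your proof is correct and follows essentially the same approach as the paper: reduce to the diagonal case via unitary similarity (the spectral theorem), where both sides equal $\sum_{j=1}^\ell \log \lambda_j$. The paper states this more tersely by simply noting that the identity is invariant under unitary similarity and then checking it for diagonal matrices, but the substance is identical.
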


\begin{proof} Without loss of generality, $A$ is a diagonal matrix with positive diagonal
elements, since the formula is invariant under unitary similarity. But
then $\log A$ is the diagonal matrix whose elements are the logarithms of
the diagonal elements of $A$. The conclusion of the Lemma is thus straightforward.
 \end{proof}

\begin{corollary}\label{triv90}
Let $A_1$, $\ldots$, $A_n\succ {\bf 0}$ and let $A:=A_1\cdots A_n \succ {\bf 0}$. Then
$$
\text{tr}\log(A_1\cdots A_n)=\log \det(A_1\cdots A_n)=\sum_{k=1}^n\text{tr}(\log A_k).
$$
\end{corollary}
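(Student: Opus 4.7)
The plan is to chain Lemma \ref{triv89} together with the multiplicativity of the determinant; no new ideas are required beyond these two ingredients. Since the hypothesis states that $A_1\cdots A_n\succ\mathbf{0}$ (and in particular is self-adjoint in the paper's sense), the functional-calculus logarithm of the product is well-defined, even though a product of positive matrices is generally neither self-adjoint nor positive.

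For the first equality, I would simply apply Lemma \ref{triv89} to the matrix $A:=A_1\cdots A_n$, which is positive definite by hypothesis. This immediately gives
\[
\tr\log(A_1\cdots A_n)=\log\det(A_1\cdots A_n).
\]

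For the second equality, I would use the multiplicativity of the determinant to write $\det(A_1\cdots A_n)=\prod_{k=1}^n\det(A_k)$. Since each $A_k\succ\mathbf{0}$, every $\det(A_k)$ is a strictly positive real number, so the scalar identity $\log(xy)=\log x+\log y$ is legitimately applicable and yields
\[
\log\det(A_1\cdots A_n)=\sum_{k=1}^n\log\det(A_k).
\]
Applying Lemma \ref{triv89} to each individual factor $A_k$ then converts $\log\det(A_k)$ into $\tr(\log A_k)$, and summing gives the desired right-hand side. There is no real obstacle here: the corollary is essentially a bookkeeping consequence of Lemma \ref{triv89} combined with the multiplicativity of the determinant, with the only mild point of care being that $A_1\cdots A_n$ need not itself be positive in general, so one has to use the hypothesis that it is.
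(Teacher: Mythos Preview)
Your proof is correct and follows essentially the same approach as the paper, which simply says ``Apply Lemma~\ref{triv89} to the product $A=A_1\cdots A_n$.'' You have merely spelled out the second equality (multiplicativity of the determinant followed by Lemma~\ref{triv89} applied to each factor) and flagged the care point about the product being positive by hypothesis, both of which the paper leaves implicit.
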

\begin{proof} Apply Lemma~\ref{triv89} to the product $A=A_1\cdots A_n$.
\end{proof}

Finally, we will need the following interesting characterization
of the determinant via the trace (also used by Helson and Lowdenslager
in~\cite{HelsonLowdenslager}, also see, e.g.,~\cite[Exercise 19, p.486]{HornJohnson}):

\begin{lemma}\label{HL-Lemma} Let $\mathcal{A}$ be the set of all matrices in $\mathcal{M}_\ell$ with determinant $1$. Then every positive matrix $C$ satisfies
\begin{equation}\label{bestin54}
\inf_{A\in\mathcal{A}}\frac{1}{\ell}\,\text{tr}\left(ACA^\dagger\right)=[\det(C)]^{1/\ell}.
\end{equation}
\end{lemma}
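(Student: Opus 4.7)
The plan is to establish the inequality $\frac{1}{\ell}\tr(ACA^\dagger)\ge [\det(C)]^{1/\ell}$ for every $A\in\mathcal{A}$ by an eigenvalue/AM-GM argument, and then exhibit an explicit $A^\star\in\mathcal{A}$ that achieves equality.

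For the lower bound, note that whenever $A\in\mathcal{A}$ the matrix $B:=ACA^\dagger$ is positive (since $C\succ\mathbf{0}$ and $A$ is invertible because $\det A=1$), so all of its eigenvalues $\lambda_1,\dots,\lambda_\ell$ are strictly positive. The classical AM-GM inequality applied to these eigenvalues gives
\begin{equation*}
\frac{1}{\ell}\tr(B)=\frac{1}{\ell}\sum_{j=1}^{\ell}\lambda_j\;\ge\;\Bigl(\prod_{j=1}^{\ell}\lambda_j\Bigr)^{1/\ell}=[\det(B)]^{1/\ell}.
\end{equation*}
Since $\det(ACA^\dagger)=\det(A)\,\det(C)\,\overline{\det(A)}=|\det(A)|^{2}\det(C)=\det(C)$, this yields the desired lower bound. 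Equality in AM-GM forces all eigenvalues of $B$ to coincide, i.e.\ $ACA^\dagger=\mu\mathbf{1}$ with $\mu=[\det(C)]^{1/\ell}$.

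To achieve equality, diagonalize $C$: write $C=UDU^\dagger$ with $U$ unitary and $D=\mathrm{diag}(d_1,\dots,d_\ell)$, $d_j>0$. Set
\begin{equation*}
A^\star:=[\det(C)]^{1/(2\ell)}\,D^{-1/2}U^\dagger.
\end{equation*}
A direct check gives $\det(A^\star)=[\det(C)]^{1/2}\cdot (d_1\cdots d_\ell)^{-1/2}\cdot \overline{\det(U)}=[\det(C)]^{1/2}/[\det(C)]^{1/2}=1$ (up to a unit-modulus factor from $\det U$, which can be absorbed by multiplying $A^\star$ by an appropriate unimodular scalar; this preserves $\det=1$ and $ACA^\dagger$). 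Then
\begin{equation*}
A^\star C (A^\star)^\dagger=[\det(C)]^{1/\ell}\,D^{-1/2}U^\dagger U D U^\dagger U D^{-1/2}=[\det(C)]^{1/\ell}\,\mathbf{1},
\end{equation*}
so $\frac{1}{\ell}\tr(A^\star C(A^\star)^\dagger)=[\det(C)]^{1/\ell}$, matching the lower bound and completing the proof.

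The only step that requires real care is the achievement-of-equality part, specifically handling the phase of $\det U$ so that one truly lands in $\mathcal{A}$; everything else is just AM-GM combined with $|\det A|=1$. One can alternatively argue by a compactness/Lagrange multiplier reduction, but the direct construction above is cleaner and avoids analytic subtleties.
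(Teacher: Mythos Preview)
Your proof is correct and, for the lower bound, takes a genuinely cleaner route than the paper's. The paper first reduces to diagonal $C=D$, writes $\tr(ADA^\dagger)=\sum_k \lambda_k\|a_k\|^2$ in terms of the columns $a_k$ of $A$, and then applies two inequalities in tandem: AM--GM to the numbers $\lambda_k\|a_k\|^2$ and Hadamard's inequality $\prod_k\|a_k\|\ge|\det A|=1$. You bypass Hadamard entirely by applying AM--GM directly to the eigenvalues of $B=ACA^\dagger$ and using the multiplicativity of the determinant to get $\det B=|\det A|^2\det C=\det C$. This is shorter and makes the mechanism (invariance of $\det$ under the $A$-action combined with the trace/determinant AM--GM) more transparent. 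The construction of the minimizer is essentially the same in both proofs (diagonalize, rescale by the square roots of the eigenvalues, fix the phase), and your remark about absorbing the unimodular factor $\overline{\det U}$ into a scalar is correct since multiplying $A^\star$ by $c$ with $|c|=1$ leaves $A^\star C(A^\star)^\dagger$ unchanged while $c^\ell$ can be chosen to cancel $\overline{\det U}$.
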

\begin{proof} Let $U$ be a unitary matrix such that $C=UDU^\dagger$, where $D$ is the diagonal matrix with eigenvalues $\lambda_1,\ldots\lambda_\ell$. Then $\lambda=\det(U)\in\mathbb{T}$. It follows that $ACA^\dagger$=$(A\overline{\lambda}U)D(A\overline{\lambda}U)^\dagger$, implying that we may assume without loss of generality that $C=D$. Then
\begin{equation*}
\text{tr}(ADA^\dagger)=\lambda_1\Vert a_1\Vert^2+\lambda_2\Vert a_2\Vert^2+\cdots+\lambda_\ell\Vert a_\ell\Vert^2,
\end{equation*}
where $a_k$ denotes the $k$th column of $A$. By the arithmetic-geometric mean inequality,
\begin{equation}\label{AGM749}
\frac{\lambda_1\Vert a_1\Vert^2+\lambda_2\Vert a_2\Vert^2+\cdots+\lambda_\ell\Vert a_\ell\Vert^2}{\ell}\geq \sqrt[\ell]{\lambda_1\cdots\lambda_\ell\Vert a_1\Vert^2\cdots\Vert a_\ell\Vert^2}.
\end{equation}
By Hadamard's inequality \cite[Inequality 7.8.2]{HornJohnson},
\begin{equation}\label{Hadamard}
\Vert a_1\Vert\cdots\Vert a_\ell\Vert\geq \det(A)=1.
\end{equation}
The equality in \eqref{AGM749} occurs if and only if
\begin{equation*}
\lambda_1\Vert a_1\Vert^2=\cdots=\lambda_\ell\Vert a_\ell\Vert^2.
\end{equation*}
The equality in \eqref{Hadamard} occurs if and only if the columns $a_k$ form an orthogonal system in $\mathbb{C}^\ell$. It follows that the equality in \eqref{bestin54} is attained for the diagonal matrix $A$ with $a\lambda_1^{-1/2},\ldots,a\lambda_{\ell}^{-1/2}$ on the diagonal. Here $a$ is chosen so as to make the determinant of $A$ equal $1$.
\end{proof}

\section{Matrix measures}

A matrix-valued nonnegative measure $\mu$ on the unit circle ${\mathbb T}$ is a countably additive mapping of the Borel $\sigma$-algebra $\mathfrak{B}({\mathbb T})$ on ${\mathbb T}$ into the
set ${\mathcal M}_\ell^{+}$ of all nonnegative $\ell\times \ell$ matrices $\mu: B\mapsto\mu(B)\in {\mathcal M}_\ell^{+}$.
It follows that for any $E\in\mathfrak{B}({\mathbb T})$
\begin{equation*}
0\leq \mu(E)\leq \mu(E)+\mu(\mathbb{T}\setminus E)=\mu(\mathbb{T}).
\end{equation*}
Then $\nu(E)=\mu(\mathbb{T})^{-1/2}\mu(E)\mu(\mathbb{T})^{-1/2}$ is also a matrix-valued nonnegative measure which is called the normalization of $\mu$.
As before, we assume that $\mu$ is normalized: $ \mu({\mathbb T})={\bf 1}. $

Recall that  $\textsf{P}_{\ell}(\mathbb{T})$ denotes the set of all matrix probability
measures, i.e., the normalized matrix-valued nonnegative measures on $\mathbb{T}$.
Let $\{e_j\}_{j=1}^l$ be the standard basis in ${\mathbb C}^\ell$.
Then for every $E\in\mathfrak{B}({\mathbb T})$ we obtain the matrix of $\mu(E)$
\begin{equation}\label{matrixOfMeasure}
\mu(E)=\begin{pmatrix}\mu_{11}(E)&\mu_{12}(E)&\cdots&\mu_{1\ell}(E)\\
\mu_{21}(E)&\mu_{22}(E)&\cdots&\mu_{2\ell}(E)\\
\vdots&\vdots&\ddots&\vdots\\
\mu_{\ell 1}(E)&\mu_{\ell 2}(E)&\cdots&\mu_{\ell \ell}(E)
\end{pmatrix}.
\end{equation}
 Since $\pmb{|\!\!|}\alpha\pmb{|\!\!|}_1=\text{tr}(\alpha)$ for every $\alpha\in{\mathcal M}_\ell^{+}$, we see that
\begin{equation}\label{tracemeasure}
|\mu_{ij}(E)|=|(\mu(E)e_j,e_i)|\leq \Vert{\mu(E)}\Vert\leq\pmb{|\!\!|}\mu(E)\pmb{|\!\!|}_1=\text{tr}(\mu(E)).
\end{equation}
It follows that the entries $\mu_{ij}(E)$ of  $\mu(E)$ are finite complex measures on $\mathbb{T}$ which are absolutely continuous with respect to $\tr(\mu)$. Thus any element $\mu$
of $\textsf{P}_{\ell}(\mathbb{T})$ is nothing but a table of measures  \eqref{matrixOfMeasure} subject to positivity conditions  and domination by the $\tr(\mu)$. We say that $\mu\in\textsf{P}_{\ell}(\mathbb{T})$ is {\em absolutely continuous (discrete, singular)\/} if so is its trace measure. It follows that any $\mu\in\textsf{P}_{\ell}(\mathbb{T})$ can be uniquely decomposed into the sum
\begin{equation}\label{LebesgueDecomp}
\mu=\mu_a+\mu_d+\mu_s,
\end{equation}
where $\mu_a$ is absolutely continuous with respect to the Lebesgue measure $d\theta/(2\pi)$, $\mu_d$ is the discrete part of $\mu$ and $\mu_s$ is its singular part. Indeed, taking the Hahn-Lebesgue decomposition of the trace measure, we can associate three matrix-valued measures with it. Namely, the entries of $\mu_a$ are the absolutely continuous parts of $\mu_{ij}$ with respect to $\text{tr}(\mu)_a$, and similarly the enties of $\mu_d$ and $\mu_s$ for the discrete and singular parts, respectively. Since Borel supports of $\text{tr}(\mu)_a$, $\text{tr}(\mu)_d$, $\text{tr}(\mu)_s$ can be chosen to be disjoint, the positivity of the corresponding matrices follows immediately. Moreover $d\mu=\mathbf{M}(\mu, \zeta)\text{tr}(d\mu)$ where $\mathbf{M}(\mu,\zeta)\in{\mathcal{M}}_\ell^+$ for $\zeta\in\mathbb{T}$.

The measure $\mu_a$ can be found by Lebesgue's differentiation theorem:
\begin{equation}\label{difflebesgue}
\mu^{\prime}(e^{i\theta})=\lim_{\epsilon\rightarrow 0^+}\frac{\mu(I_\epsilon)}{2\epsilon}\quad\text{a.e. on } \mathbb{T},
\end{equation}
where $I_{\epsilon}$ denotes the arc of length $2\epsilon$ on $\mathbb{T}$ centered at $e^{i\theta}$. Then
\begin{equation*}
\mu_a(E)=\int_E\mu^{\prime}(e^{i\theta})\frac{d\theta}{2\pi}.
\end{equation*}

We say that a sequence $\{\mu^{(n)}\}_{n\geq 0}$ in $\textsf{P}_{\ell}(\mathbb{T})$ converges to $\mu\in\textsf{P}_{\ell}(\mathbb{T})$ in $*$-weak topology if
\begin{equation}\label{starconv47}
*{-}\lim_n \mu_{ij}^{(n)}=\mu_{ij}
\end{equation}
for every pair of indices $(i,j)$.  For our class
 $\textsf{P}_{\ell}(\mathbb{T})$, we need matrix analogues  
of two Helley's lemmas as they are stated in
\cite[Lemma 8.5, Theorem 8.6]{Khrushchev} for scalar measures:
\begin{theorem}\label{Helley1} If $*{-}\lim_n \mu^{(n)}=\mu$ in $\textsf{P}_{\ell}(\mathbb{T})$,
 then
\begin{equation}\label{arctest}
\lim_n \mu^{(n)}(I)=\mu(I)
\end{equation}
for any open arc $I$ on $\mathbb{T}$ such that $\mu$ vanishes at the endpoints of $I$.
\end{theorem}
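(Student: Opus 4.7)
The plan is to reduce everything to the scalar Helley lemma by looking at suitable quadratic forms. Recall that by hypothesis the entry measures $\mu^{(n)}_{ij}$ converge $*$-weakly to $\mu_{ij}$, so for every fixed vector $u \in \mathbb{C}^\ell$ the \emph{positive} scalar measures
\[
\mu_u^{(n)}(E) \eqbd u^\dagger \mu^{(n)}(E) u = \sum_{i,j} \overline{u_i} u_j \, \mu_{ij}^{(n)}(E)
\]
converge $*$-weakly (as positive measures) to $\mu_u(E) \eqbd u^\dagger \mu(E) u$. Their total masses are all equal to $\|u\|^2$ because each $\mu^{(n)}$ is normalized.

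The key observation about the endpoints is that the hypothesis ``$\mu$ vanishes at the endpoints of $I$'' means $\mu(\{z\}) = \mathbf{0}$ for each endpoint $z$ of $I$, which forces $u^\dagger \mu(\{z\}) u = 0$ for every $u$, i.e., every positive measure $\mu_u$ has no mass at the endpoints. First, I would apply the scalar Helley lemma (\cite[Lemma 8.5]{Khrushchev}) to each $\mu_u^{(n)}$, obtaining
\[
\lim_n \mu_u^{(n)}(I) = \mu_u(I) \qquad\text{for every } u \in \mathbb{C}^\ell.
\]

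Next, I would recover the matrix statement from this vector statement by polarization. For any two vectors $u,v \in \mathbb{C}^\ell$ one has
\[
4\, v^\dagger \mu^{(n)}(I) u = \sum_{k=0}^{3} i^{k}\,(u + i^{k} v)^\dagger \mu^{(n)}(I)\,(u + i^{k} v),
\]
and each of the four terms on the right converges to the analogous expression with $\mu$ in place of $\mu^{(n)}$ by the previous step. Hence $v^\dagger \mu^{(n)}(I) u \to v^\dagger \mu(I) u$ for every pair $u,v$; taking $u = e_j$, $v = e_i$ gives entrywise convergence $\mu_{ij}^{(n)}(I) \to \mu_{ij}(I)$, and since ${\mathcal M}_\ell$ is finite-dimensional this is the desired matrix convergence $\mu^{(n)}(I) \to \mu(I)$.

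The main obstacle is the standard subtlety that the off-diagonal entries $\mu_{ij}$ are only \emph{complex} measures, so the scalar Helley lemma cannot be applied directly to them (their total variations are not controlled in an obvious way, and the hypothesis on endpoints is phrased in terms of the matrix $\mu(\{z\})$, not each entry separately). Routing through the positive quadratic forms $\mu_u$ bypasses this: positivity gives us both the uniform total-mass bound $\|u\|^2$ and the vanishing at the endpoints for free from the matrix hypothesis, and polarization then transfers the scalar conclusion back to the matrix level.
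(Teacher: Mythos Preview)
Your proof is correct and takes essentially the same route as the paper: both reduce to the scalar positive measures $u^\dagger \mu^{(n)} u$ and then pass back to the matrix statement. The only cosmetic difference is that you invoke the scalar Helley lemma \cite[Lemma~8.5]{Khrushchev} as a black box and then polarize, whereas the paper inlines that scalar argument (approximation from below by continuous bump functions on $I$ and on the complementary arc, followed by a squeeze using $\mu^{(n)}(\mathbb{T})=\mathbf{1}$) and concludes directly from the arbitrariness of the test vector.
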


\begin{proof} Let ${x}$ be an arbitrary fixed column-vector in $\mathbb{C}^\ell$; as usual,
 ${x}^\dagger$ denotes its conjugate transpose (row-vector). Let $t\mapsto f(t)$ be a nonnegative 
continuous function with values in $[0,1]$ supported on an open arc $I$. Then
\begin{equation}\label{mainineq538}
x^\dagger\mu^{(n)}(I)x=\int_I x^\dagger d\mu^{(n)}(t)x\geq \int_I f(t)x^\dagger d\mu^{(n)}(t)x.
\end{equation}
By \eqref{starconv47}
\begin{equation*}
\lim_n \int_I f(t)x^\dagger d\mu^{(n)}(t)x=\lim_n\int_I f(t)\sum_{i,j=1}^\ell\overline{x_i} \, d\mu_{ij}^{(n)} x_j=\sum_{i,j=1}^\ell\lim_n\int_I f(t)\overline{x_i} \,  d\mu_{ij}^{(n)}  x_j=\int_I f(t)x^\dagger d\mu(t)x.
\end{equation*}
This and \eqref{mainineq538} imply
\begin{equation}\label{firstineq245}
\limsup_n x^\dagger\mu^{(n)}(I)x\geq \liminf_n x^\dagger\mu^{(n)}(I)x\geq \sup_{f}\int_I f(t)x^\dagger d\mu(t)x=\int_Ix^\dagger d\mu(t)x=x^\dagger \mu(I) x.
\end{equation}
Similarly for the complementary arc $J$ to the closure of $I$ in $\mathbb{T}$ we have
\begin{equation}\label{secondineq245}
\limsup_n x^\dagger\mu^{(n)}(J)x\geq \liminf_n x^\dagger\mu^{(n)}(J)x\geq \sup_{f}\int_J f(t)x^\dagger d\mu(t)x=\int_Jx^\dagger d\mu(t)x=x^\dagger \mu(J) x.
\end{equation}
Since $\mu$ vanishes at the endpoints of $I$ and $\mu^{(n)}\in \textsf{P}_{\ell}(\mathbb{T})$ we obtain that
\begin{equation}\label{twoineq76}
\mu^{(n)}(I)+\mu^{(n)}(J)\preceq\mathbf{1}~, \qquad \mu(I)+\mu(J)=\mathbf{1}.
\end{equation}
Combining \eqref{firstineq245} and \eqref{secondineq245} with \eqref{twoineq76} we conclude that
\begin{multline*}
x^\dagger x=x^\dagger\mathbf{1}x\geq \limsup_n x^\dagger(\mu^{(n)}(I)+\mu^{(n)}(J))x
\geq \liminf_n x^\dagger(\mu^{(n)}(I)+\mu^{(n)}(J))x \\ \geq \int_Ix^\dagger d\mu(t)x+\int_Jx^\dagger d\mu(t)x=x^\dagger \mu(I) x+x^\dagger \mu(J) x=
x^\dagger \mu(\mathbb{T}) x=x^\dagger x.\\
\end{multline*}
This is only possible if equalities hold in \eqref{firstineq245} and in \eqref{secondineq245}.
Since the vector $x$ was arbitrary, this implies the conclusion of the theorem.
\end{proof}
\begin{theorem}\label{HelleysTh} Let $\{\mu^{(n)}\}_{n\geq 0}$ be a sequence  in $\textsf{P}_{\ell}(\mathbb{T})$ and $\mu\in\textsf{P}_{\ell}(\mathbb{T})$. Then $*{-}\lim_n \mu^{(n)}=\mu$ if and only if $\lim_n \mu^{(n)}(I)=\mu(I)$ for any open arc $I$ on $\mathbb{T}$ such that $\mu$ does not have point masses at the endpoints of $I$.
\end{theorem}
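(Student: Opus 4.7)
The forward implication is exactly Theorem~\ref{Helley1}, so all the work is in the converse. My plan is to reduce the converse entry-by-entry to the scalar Helley theorem \cite[Theorem 8.6]{Khrushchev} by means of polarization.

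First, I would fix an arbitrary $x\in\mathbb{C}^\ell$ and set $\sigma^{(n)}(E):=x^\dagger\mu^{(n)}(E)x$ and $\sigma(E):=x^\dagger\mu(E)x$; each is a scalar nonnegative finite Borel measure on $\mathbb{T}$ of constant total mass $\|x\|^2$. For any open arc $I$ whose endpoints $z$ satisfy $\mu(\{z\})=\mathbf{0}$, the hypothesis $\mu^{(n)}(I)\to\mu(I)$ yields $\sigma^{(n)}(I)\to\sigma(I)$. Such $z$ are automatically non-atoms of $\sigma$ since $\sigma(\{z\})=x^\dagger\mu(\{z\})x$, and the atoms of the matrix measure $\mu$ form a countable set (the sum of $\tr\mu(\{z\})$ over atoms is bounded by $\ell$). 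Hence $\sigma^{(n)}(I)\to\sigma(I)$ on a cocountable family of arcs; applying the scalar Helley theorem gives $*{-}\lim_n\sigma^{(n)}=\sigma$ as scalar measures on $\mathbb{T}$.

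To pass from scalar quadratic forms back to individual entries, I would apply the preceding conclusion to the four specific vectors $x=e_j\pm e_k$ and $x=e_j\pm ie_k$ and combine the resulting scalar limits via the polarization identity
\begin{equation*}
4\,e_j^{\dagger}\mu\,e_k \;=\; (e_j+e_k)^\dagger\mu(e_j+e_k)-(e_j-e_k)^\dagger\mu(e_j-e_k)+i(e_j-ie_k)^\dagger\mu(e_j-ie_k)-i(e_j+ie_k)^\dagger\mu(e_j+ie_k).
\end{equation*}
Integrating any continuous test function $f$ against both sides yields $\int f\,d\mu_{jk}^{(n)}\to\int f\,d\mu_{jk}$ for every pair $(j,k)$, which is precisely the definition \eqref{starconv47} of $*$-weak convergence $\mu^{(n)}\to\mu$ in $\textsf{P}_\ell(\mathbb{T})$.

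The only subtle point is the invocation of the scalar Helley theorem when convergence is guaranteed only on arcs whose endpoints avoid the atoms of the matrix $\mu$ rather than the (possibly smaller) atom set of the individual scalar $\sigma$. This cocountable restriction is harmless: any continuous function on $\mathbb{T}$ can be uniformly approximated by step functions whose subdivision points avoid any prescribed countable set, so the scalar proof carries through verbatim. No other genuine obstacle is anticipated.
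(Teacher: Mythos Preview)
Your proposal is correct and follows essentially the same route as the paper: reduce to scalar measures $x^\dagger\mu^{(n)}x$, invoke the scalar Helly theorem \cite[Theorem~8.6]{Khrushchev}, and recover the matrix entries by polarization. Your treatment is in fact slightly more explicit than the paper's about why the restriction to arcs avoiding atoms of $\mu$ (rather than of $x^\dagger\mu x$) causes no difficulty.
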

\begin{proof} One direction has been already proved in Theorem \ref{Helley1}.
Suppose now that $\lim_n \mu^{(n)}(I)=\mu(I)$ for any open arc $I$ on $\mathbb{T}$ such that $\mu$ does not have point masses at the endpoints of $I$. Then for every $x\in\mathbb{C}^\ell$, $\Vert x\Vert=1$, the sequence of usual probability measures $x^\dagger\mu^{(n)}x$ converges to $x^\dagger\mu x$ on any interval which does not have point masses of $\mu$ (and therefore of $x^\dagger\mu x$ since it is absolutely continuous with respect to $\mu$). Then by Theorem 8.6 of \cite{Khrushchev} the sequence $x^\dagger\mu^{(n)}x$ converges to $x^\dagger\mu x$ in the $*$-weak topology. Now the polarization identity implies $*{-}\lim_n x^\dagger\mu^{(n)}y=x^\dagger\mu y$ for any pair of vectors $(x,y)$. 
Setting $x:=e_i$, $y:=e_j$ for all pairs  $i,j$, we obtain the weak limits for all entries of $\mu$.
\end{proof}
\begin{theorem}\label{Ltwoth} Suppose that $\nu_{n}=h_{n}d\theta/(2\pi)$ where $h_{n}$ are matrix-valued functions on $\mathbb{T}$. Suppose that there is a positive constant $C$ such that
\begin{equation*}
\int_{\mathbb{T}}\Vert{h_{n}}\Vert^2\frac{d\theta}{2\pi}<C.
\end{equation*}
Then any $*$-weak limit point of $\{\nu_{n}\}$ is an absolutely continuous matrix-valued measure.
\end{theorem}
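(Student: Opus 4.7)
The idea is to exploit the uniform $L^2$-bound on the densities $h_n$ to extract a weakly convergent subsequence in $L^2(\mathbb{T};\mathcal{M}_\ell)$, and then identify this $L^2$-limit with the $*$-weak limit $\nu$, which will exhibit $\nu$ as absolutely continuous with respect to Lebesgue measure.

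First, suppose $\nu_{n_k}\to\nu$ in the $*$-weak topology. By the norm equivalences~\eqref{ineqfornorms}, the hypothesis uniformly bounds the Hilbert--Schmidt norms $\pmb{|\!\!|}h_{n_k}\pmb{|\!\!|}_2$ in $L^2(d\theta/(2\pi))$. The space $L^2(\mathbb{T};\mathcal{M}_\ell)$, equipped with the inner product $\langle f,g\rangle\eqbd\int_{\mathbb{T}}\tr(g^\dagger f)\,d\theta/(2\pi)$, is a Hilbert space, so by reflexivity (Banach--Alaoglu) a further subsequence, still denoted $\{h_{n_k}\}$, converges weakly in $L^2$ to some $h\in L^2(\mathbb{T};\mathcal{M}_\ell)$.

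Next, I would identify this weak $L^2$-limit with the Radon--Nikodym derivative of $\nu$. Fix indices $i,j\in\{1,\ldots,\ell\}$ and a continuous scalar function $\phi$ on $\mathbb{T}$. Testing the weak $L^2$-convergence against the $L^2$-function $\theta\mapsto\overline{\phi(\theta)}\,e_j e_i^\dagger$ yields
$$\int_{\mathbb{T}}\phi(\theta)\,(h_{n_k})_{ij}(\theta)\,\frac{d\theta}{2\pi}\;\longrightarrow\;\int_{\mathbb{T}}\phi(\theta)\,h_{ij}(\theta)\,\frac{d\theta}{2\pi}.$$
Because $\nu_{n_k}=h_{n_k}\,d\theta/(2\pi)$, the left-hand side equals $\int_{\mathbb{T}}\phi\,d(\nu_{n_k})_{ij}$, which by $*$-weak convergence of the scalar entries tends to $\int_{\mathbb{T}}\phi\,d\nu_{ij}$. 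Varying $\phi$ over $C(\mathbb{T})$ and invoking the Riesz representation theorem on the compact set $\mathbb{T}$ forces $d\nu_{ij}=h_{ij}\,d\theta/(2\pi)$ for every pair $(i,j)$, so $d\nu=h\,d\theta/(2\pi)$ and $\nu$ is absolutely continuous with respect to the Lebesgue measure.

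The only genuinely analytic step is the weak $L^2$-compactness, which is immediate from reflexivity of $L^2$; the rest of the argument is a matter of pairing the two limiting procedures ($*$-weak of measures and weak of $L^2$) against continuous test functions. An alternative route would use Cauchy--Schwarz to derive the estimate $x^\dagger \nu_n(I)\,x\le\sqrt{C}\,\sqrt{|I|/(2\pi)}$ on arcs and then pass to the limit via Theorem~\ref{Helley1}, but the square-root bound does not directly yield absolute continuity through a covering argument, so the $L^2$-compactness approach above is cleaner.
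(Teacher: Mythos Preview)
Your proof is correct and follows essentially the same approach as the paper: both use the norm equivalence~\eqref{ineqfornorms} to pass to Hilbert--Schmidt (hence entrywise $L^2$) bounds, and then invoke the standard scalar fact that a uniformly $L^2$-bounded sequence of densities can only have absolutely continuous $*$-weak limit points. The paper's proof is extremely terse---it simply declares the scalar result ``standard'' and applies it entrywise---whereas you spell out the weak $L^2$-compactness and the pairing argument explicitly, but the underlying strategy is the same.
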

\begin{proof} By norm equivalence \eqref{ineqfornorms} in ${\mathcal M}_\ell$, we can replace the operator norm of $h_n$ by its Hilbert-Schmidt norm. For each matrix entry, the result of this
theorem is standard, so it holds for the Hilbert-Schmidt (and hence the spectral) norm of the entire matrix as well.
\end{proof}

Every $\mu\in\textsf{P}_{\ell}(\mathbb{T})$ defines two positive definite quadratic forms on the two-sided module $C(\mathbb{T}, {\mathcal M}_\ell)$ over ${\mathcal M}_\ell$  of all continuous functions with values in ${\mathcal M}_\ell$. They correspond to the right and left multiplication and are defined as matrix-valued `inner products' by
\begin{eqnarray}
\ang{f,g}_R:=\int f(x)^\dagger\, d\mu(x) g(x), \label{ileft} \\
\ang{f,g}_L:=\int g(x)\, d\mu(x) f (x)^\dagger. \label{iright}
\end{eqnarray}

Let ${\mathcal P}$ denote the set of all polynomials in $z\in{\mathbb C}$ with
coefficients from ${\mathcal M}_\ell$. For a nonnegative integer $n$,
${\mathcal P}_n$ will denote the set of polynomials in ${\mathcal P}$ of degree at most $n$.
Note that, to generate an infinite sequence of orthogonal
polynomials, $\mu$ must satisfy \eqref{matrixpoly} for every nonzero polynomial $p$.
This is equivalent to the condition that the non-negative Borel measure
\begin{equation*}
\det\left(\mathbf{M}(\mu,\zeta)\right)\text{tr}(d\mu)
\end{equation*}
has  infinite Borel support, see~\cite{DPS:08,Wiener}.

\section{Analysis of operator functions}

In this section we list some properties of the logarithm as an operator
function.
We start with the definitions of operator monotone, convex, and concave
functions
defined on the half real line $(0,\infty)$.
Let $\mathcal{H}$ be an infinite-dimensional (separable) Hilbert space.
Let $B_+(\mathcal{H})$ denote the set of all positive operators in
$B(\mathcal{H})$. A continuous real function $f$ on $(0,\infty)$ is said
to be {\em operator monotone\/} (or, more precisely, {\em operator monotone
increasing)\/} if
$A\preceq B$ implies $f(A)\preceq f(B)$ for $A,B\in B_+(\mathcal{H})$,
and {\em operator monotone decreasing\/} if $-f$ is operator monotone increasing,
i.e.,  if
$A\preceq B$ implies
$f(A)\succeq f(B)$, where $f(A)$
and $f(B)$ are defined via functional calculus as usual. Also, $f$ is
said to be
{\em operator convex\/} if $f(\lambda A+(1-\lambda)B)\preceq \lambda
f(A)+(1-\lambda)f(B)$ for
all $A,B\in B_+(\mathcal{H})$ and $\lambda\in(0,1)$, and {\em operator
concave} if $-f$ is
operator convex (see also ~\cite{Bhatia:96}).

One should not expect that the operator monotonicity and the operator
convexity of $f$ follow from
the same properties of the scalar function $f$. For example, a power
function $t^\alpha$ on $(0,\infty)$ is operator monotone if and only if
$\alpha\in[0,1]$,
operator monotone decreasing if and only if
$\alpha\in[-1,0]$, and operator convex if and only if
$\alpha\in[-1,0]\cup[1,2]$ (see, for instance,~\cite[Chapter V]{Bhatia:96}).
Moreover, the function $f(t)=\exp(t)$ is neither operator monotone nor
operator convex on any (spectral) interval.

As is known, the operator monotone functions are generated by
holomorphic functions that
map the upper half plane into the upper half plane. Clearly, if one
fixes a branch of the
logarithm so that it is real on $(0,\infty)$ then the corresponding
holomorophic function
maps the upper half plane into the upper half plane.

\begin{proposition}\label{OpMon}
The functions $\log t$ and $-1/t$ are operator monotone increasing on
$(0,\infty)$.
\end{proposition}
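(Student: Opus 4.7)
I would handle the two functions in the stated order, reducing $\log t$ to the case of $-1/t$ via an integral representation.

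\textbf{Step 1: $-1/t$ is operator monotone on $(0,\infty)$.} Suppose $0 \prec A \preceq B$. Both $A$ and $B$ are invertible, and it suffices to show $B^{-1} \preceq A^{-1}$. The plan is the standard sandwich trick: conjugate $A \preceq B$ by $A^{-1/2}$ on both sides to obtain $\mathbf{1} \preceq A^{-1/2} B A^{-1/2}$. Since $A^{-1/2} B A^{-1/2}$ is positive and $\succeq \mathbf{1}$, the spectral theorem (applied to this self-adjoint matrix, whose spectrum lies in $[1,\infty)$) gives that its inverse satisfies
\[
(A^{-1/2} B A^{-1/2})^{-1} = A^{1/2} B^{-1} A^{1/2} \preceq \mathbf{1}.
\]
Conjugating once more, this time by $A^{-1/2}$, produces $B^{-1} \preceq A^{-1}$, which is exactly the claim. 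This also shows that the translate $t \mapsto -1/(t+s)$ is operator monotone on $(0,\infty)$ for every fixed $s \ge 0$, because shifting does not disturb the order ($A+s\mathbf{1} \preceq B+s\mathbf{1}$ whenever $A \preceq B$, by Lemma~\ref{triv87}).

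\textbf{Step 2: Integral representation of $\log t$.} I would verify, by elementary scalar calculus, that
\[
\log t \;=\; \int_{0}^{\infty}\!\left(\frac{1}{1+s} - \frac{1}{t+s}\right) ds \qquad (t > 0),
\]
since the antiderivative is $\log\frac{1+s}{t+s}$, which tends to $0$ as $s\to\infty$ and evaluates to $-\log t$ at $s=0$. By the functional calculus for self-adjoint matrices, the same representation holds with $t$ replaced by a positive matrix $A$, namely
\[
\log A \;=\; \int_{0}^{\infty}\!\left(\frac{1}{1+s}\,\mathbf{1} - (A + s\mathbf{1})^{-1}\right) ds,
\]
where the integrand is a norm-convergent Bochner integral (one checks the $O(s^{-2})$ decay for large $s$ and integrability near $s=0$ using a common spectral bound on $A$).

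\textbf{Step 3: Combine.} For $0 \prec A \preceq B$ and each fixed $s \ge 0$, Step 1 gives
\[
\tfrac{1}{1+s}\,\mathbf{1} - (A+s\mathbf{1})^{-1} \;\preceq\; \tfrac{1}{1+s}\,\mathbf{1} - (B+s\mathbf{1})^{-1}.
\]
Integrating this Loewner inequality over $s\in(0,\infty)$ (which preserves $\preceq$ because for any fixed vector $x$ the scalar integrand $x^\dagger(\cdots)x$ is pointwise dominated by the right-hand side) yields $\log A \preceq \log B$. This completes the proof for $\log t$.

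\textbf{Main obstacle.} The only genuinely delicate point is the last step: justifying that term-by-term integration preserves the operator inequality and that the representation of $\log A$ through the matrix-valued Bochner integral really coincides with the $\log A$ defined via functional calculus. Both issues are handled by testing against an arbitrary vector $x\in\mathbb{C}^\ell$, which reduces everything to scalar integrals of nonnegative functions; the matrix case then follows since $\ell<\infty$ makes all convergence issues trivial.
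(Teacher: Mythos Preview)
Your proof is correct. The paper itself does not give an argument but simply refers to \cite[Section~V.4]{Bhatia:96}; the approach you outline (the conjugation trick for $-1/t$, followed by the integral representation $\log t=\int_0^\infty\big(\tfrac{1}{1+s}-\tfrac{1}{t+s}\big)\,ds$ and pointwise comparison) is exactly the standard proof one finds in that reference, so your write-up effectively supplies the details the paper omits.
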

A detailed proof can be found in~\cite[Section~V.4]{Bhatia:96}.

So,  $1/t$ is operator monotone decreasing on
$(0,\infty)$. Furthermore,
it follows from~\cite[Exercise~V.3.14]{Bhatia:96} that the integration
of an operator monotone
decreasing function gives an operator concave function.

\begin{proposition}\label{OpConc}
The function $\log t$ is operator concave on $(0,\infty)$.
\end{proposition}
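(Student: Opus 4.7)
The plan is to leverage Proposition~\ref{OpMon} through the scalar integral representation
\begin{equation*}
\log t = \int_0^\infty \Bigl(\frac{1}{1+s}-\frac{1}{t+s}\Bigr)\,ds, \qquad t>0,
\end{equation*}
which is immediate from the fundamental theorem of calculus. Lifting this to positive operators $A$ via the functional calculus yields
\begin{equation*}
\log A = \int_0^\infty \Bigl(\tfrac{1}{1+s}\,\mathbf{1}-(A+s\,\mathbf{1})^{-1}\Bigr)\,ds,
\end{equation*}
with the integrand controlled in operator norm by the spectral bounds of $A$. Since Loewner inequalities survive integration against nonnegative measures (a Riemann-sum limit of Lemma~\ref{triv87}), operator concavity of $\log$ reduces to showing that, for each fixed $s\ge 0$, the function $t\mapsto -(t+s)^{-1}$ is operator concave on $(0,\infty)$.

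After the harmless shift $A\mapsto A+s\,\mathbf{1}$, what remains is the operator convexity of the inverse on the positive cone:
\begin{equation*}
\bigl(\lambda A+(1-\lambda)B\bigr)^{-1}\;\preceq\;\lambda A^{-1}+(1-\lambda)B^{-1}, \qquad A,B\succ\mathbf{0},\ \lambda\in(0,1).
\end{equation*}
I would prove this by the standard conjugation trick. Set $M:=\lambda A+(1-\lambda)B$ and $X:=M^{-1/2}AM^{-1/2}$, $Y:=M^{-1/2}BM^{-1/2}$. Then $\lambda X+(1-\lambda)Y=\mathbf{1}$, so $Y$ is an affine function of $X$ and the two commute. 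Conjugating the target inequality by $M^{1/2}$ and simultaneously diagonalizing $X,Y$ collapses everything to the scalar inequality $\lambda/x+(1-\lambda)/y\ge 1$ under the constraint $\lambda x+(1-\lambda)y=1$, which is just convexity of $1/t$ on $(0,\infty)$.

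The main obstacle is organizational rather than technical: verifying the norm convergence of the operator-valued integral and checking carefully that a pointwise Loewner inequality in the integrand lifts to the integrated inequality. Once those points are granted, the integral representation and the commuting reduction are both routine. An alternative and essentially trivial path, flagged by the authors immediately before the proposition, is to observe that $1/t$ is operator monotone decreasing by Proposition~\ref{OpMon}, so its antiderivative $\log t=\int_1^t s^{-1}\,ds$ is automatically operator concave via the general integration principle \cite[Exercise~V.3.14]{Bhatia:96}.
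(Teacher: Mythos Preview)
Your argument is correct. The paper does not actually give a proof of this proposition: it merely remarks beforehand that integrating an operator monotone decreasing function yields an operator concave one (citing \cite[Exercise~V.3.14]{Bhatia:96}), and afterward points to \cite[Theorem~3.1]{AnHi:11}. Your ``alternative path'' is verbatim the paper's route; your main argument is just a self-contained unpacking of that same integration principle, using the explicit representation $\log t=\int_0^\infty\bigl((1+s)^{-1}-(t+s)^{-1}\bigr)\,ds$ together with a direct proof of the operator convexity of $t\mapsto t^{-1}$ via the commuting reduction. So the approaches coincide, with yours supplying the details the paper outsources to references; the only cost is that you must justify the norm convergence of the operator integral and the passage of Loewner order through it, which you correctly flag as routine.
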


This statement can also be verified by means of~\cite[Theorem~3.1]{AnHi:11}.

Now, we are in a position to formulate the matrix Jensen inequality for
the logarithm.
Namely, Proposition~\ref{OpConc} and~\cite[Theorem~4.2]{FarZh:07} yield
the following statement.
\begin{proposition}\label{prJensen}
Let $f : {\mathbb T} \to B_+({\mathcal M}_\ell)$ be a measurable function.
Then the following inequality holds:
\begin{equation}\label{MJIN}
\int_{\mathbb{T}}\log f(\theta)
\frac{d\theta}{2\pi}
\preceq \log\left(\int_{\mathbb{T}}f(\theta)\frac{d\theta}{2\pi}\right).
\end{equation}
\end{proposition}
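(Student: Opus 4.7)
The plan is to reduce the continuous inequality to its discrete (finite-sum) version, which in turn follows by induction from the two-point operator concavity of $\log$ asserted in Proposition~\ref{OpConc}. First I would establish the \emph{discrete Jensen inequality}: for positive weights $\lambda_1,\ldots,\lambda_n$ with $\sum_k\lambda_k=1$ and positive matrices $A_1,\ldots,A_n\in B_+(\mathcal{M}_\ell)$,
$$
\sum_{k=1}^n \lambda_k \log A_k \,\preceq\, \log\Bigl(\sum_{k=1}^n \lambda_k A_k\Bigr).
$$
The base case $n=2$ is exactly Proposition~\ref{OpConc}. For the inductive step I would rewrite $\sum_{k\le n}\lambda_k A_k = (1-\lambda_n)B + \lambda_n A_n$ with $B := (1-\lambda_n)^{-1}\sum_{k<n}\lambda_k A_k$, apply two-point operator concavity to this convex combination, and then invoke the induction hypothesis on $\log B$, assembling the inequalities with Lemma~\ref{triv87}.

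Next I would apply the discrete inequality to simple-function approximations of $f$. If $f = \sum_{k=1}^n A_k\chi_{E_k}$ is a simple function with Borel partition $\{E_k\}$ of $\mathbb{T}$ and each $A_k\succ \mathbf{0}$, then setting $\lambda_k := |E_k|/(2\pi)$ collapses both sides of \eqref{MJIN} exactly to $\sum_k\lambda_k\log A_k$ and $\log\bigl(\sum_k\lambda_k A_k\bigr)$, so the discrete Jensen inequality gives \eqref{MJIN} for such $f$. For general measurable $f$ I would pick simple functions $f_m \to f$ pointwise almost everywhere and pass to the limit. Entrywise convergence of the two integrals holds by dominated convergence on any compact range of $f$ inside the strictly positive cone, where $\log$ is continuous and bounded; and the closedness of $\mathcal{M}_\ell^+$ under entrywise limits preserves the Loewner inequality in the limit.

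The main obstacle is precisely this limit passage, because $\log$ is unbounded near the boundary of $B_+(\mathcal{M}_\ell)$, so dominated convergence is not automatic for unbounded $f$. The cleanest way around this is to truncate in the functional calculus to $f_{\varepsilon,R}$ with spectrum in $[\varepsilon,R]$, prove \eqref{MJIN} for the truncation by the argument above, and then let $\varepsilon\to 0$ and $R\to\infty$ using the operator monotonicity of $\log$ from Proposition~\ref{OpMon} together with a Fatou-type estimate applied entrywise to control the divergent directions; in the intended application of \eqref{MJIN} later in the paper, $f$ will in fact take values in a fixed compact subset of the strictly positive cone, so this truncation step is needed only for the sake of the general statement.
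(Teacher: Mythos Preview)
Your approach is sound, and in fact goes further than the paper does: the paper does not prove Proposition~\ref{prJensen} at all, but simply combines the operator concavity of $\log$ (Proposition~\ref{OpConc}) with a ready-made matrix Jensen inequality from Farenick and Zhou~\cite[Theorem~4.2]{FarZh:07}. In other words, the paper outsources exactly the passage from two-point concavity to the integral inequality that you are sketching by hand.

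Your discrete-inequality-plus-approximation route is the standard way to establish such integral Jensen inequalities from operator concavity, and the inductive step you describe is correct. The one place that deserves care, as you yourself note, is the limit passage; a clean version of your truncation idea is to replace $f$ by $f+\varepsilon\mathbf{1}$ (so that the spectrum is bounded below and $\log$ is continuous there), prove \eqref{MJIN} for the shifted function via simple-function approximation and dominated convergence, and then let $\varepsilon\downarrow 0$ using operator monotonicity of $\log$ on the left and operator continuity on the right. The upper truncation is handled by the elementary bound $\log^+ t\le t$, which controls $\log^+ f$ by the integrable $f$. Compared with the paper's one-line citation, your argument is longer but self-contained; the cited theorem of Farenick--Zhou does essentially the same work in greater generality (for arbitrary operator-concave functions and matrix-valued measures), so neither approach is conceptually deeper than the other.
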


Besides monotonicity and convexity, we will also deal with  operator continuity. Recall that
a function $f$ defined on $(0,\infty)$ is operator continuous if the
relation
$\Vert A_n-A\Vert_{\mathcal{H}}\to 0$ implies $\Vert
f(A_n)-f(A)\Vert_{{\mathcal H}}\to 0$
for any $A, A_n\in B_+({\mathcal H})$.

\begin{proposition}\label{OpCont}
The function $\log t$ is operator continuous on $(0,\infty)$.
\end{proposition}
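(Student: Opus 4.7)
The approach is the standard polynomial-approximation argument for continuous functional calculus on a compact interval. The statement ``operator continuous on $(0,\infty)$'' has to be read in a locally uniform sense: given $A\in B_+(\mathcal{H})$ whose spectrum $\sigma(A)$ is contained in some compact $[a,b]\subset(0,\infty)$, and $A_n\in B_+(\mathcal{H})$ with $\|A_n-A\|_{\mathcal{H}}\to 0$, I want to conclude $\|\log A_n-\log A\|_{\mathcal{H}}\to 0$. (The hypothesis $A,A_n\in B_+(\mathcal{H})$ rules out $0$-spectrum since $\log$ is only defined on $(0,\infty)$; the genuine work is to rule it out uniformly along the sequence.)

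First I would localize the spectra. Upper semicontinuity of the spectrum under norm convergence gives that, for every $\delta>0$, $\sigma(A_n)$ is eventually contained in a $\delta$-neighborhood of $\sigma(A)\subset[a,b]$. Choosing $\delta:=a/2$, one obtains an index $N_0$ and a compact set $K:=[a/2,\,b+a/2]\subset(0,\infty)$ such that $\sigma(A_n)\subset K$ for all $n\geq N_0$ and $\sigma(A)\subset K$. This is the key reduction: everything is now happening inside the spectral window $K$, on which $\log$ is a bona fide continuous real-valued function.

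Next I would approximate. By Weierstrass, given $\varepsilon>0$, choose a polynomial $p$ with
\[
\sup_{t\in K}|\log t-p(t)|<\varepsilon/3.
\]
The Borel (or just continuous) functional calculus is isometric on the spectrum, so
\[
\|\log A-p(A)\|_{\mathcal{H}}\leq \sup_{t\in\sigma(A)}|\log t-p(t)|<\varepsilon/3,
\]
and likewise $\|\log A_n-p(A_n)\|_{\mathcal{H}}<\varepsilon/3$ for every $n\geq N_0$. Polynomials in operators are norm-continuous on norm-bounded sets, since the sequence $(A_n)$ is uniformly bounded and composition of addition and multiplication preserves norm convergence; hence there is $N\geq N_0$ with $\|p(A_n)-p(A)\|_{\mathcal{H}}<\varepsilon/3$ for $n\geq N$. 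A triangle inequality
\[
\|\log A_n-\log A\|_{\mathcal{H}}\leq \|\log A_n-p(A_n)\|_{\mathcal{H}}+\|p(A_n)-p(A)\|_{\mathcal{H}}+\|p(A)-\log A\|_{\mathcal{H}}<\varepsilon
\]
finishes the argument.

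The only real obstacle is the preliminary spectral localization: one must convert norm convergence $A_n\to A$ into a uniform lower bound on $\sigma(A_n)$. This is what makes the claim fail near $0$ and forces the implicit requirement that $\sigma(A)\subset(0,\infty)$ is compact (equivalently, $A$ is invertible in $B_+(\mathcal{H})$). After that, everything is a direct application of Weierstrass plus the fact that polynomial functional calculus is norm-continuous.
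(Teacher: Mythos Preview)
Your argument is correct. The paper, however, takes a different and much shorter route: it simply observes that $\log$ extends to a holomorphic function on $\mathbb{C}\setminus(-\infty,0]$ and invokes the Dunford--Schwartz (Riesz) holomorphic functional calculus, under which $A\mapsto f(A)$ is automatically norm-continuous via the contour integral representation $f(A)=\frac{1}{2\pi i}\oint_\Gamma f(\zeta)(\zeta-A)^{-1}\,d\zeta$ and the norm-continuity of the resolvent. Your approach is more elementary in that it avoids holomorphic calculus altogether, relying only on Weierstrass approximation and the continuous functional calculus for self-adjoint operators; it also makes the spectral localization step explicit, which the paper's one-line proof hides inside the Dunford--Schwartz machinery. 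The trade-off is length versus prerequisites: the paper's proof is a sentence but assumes familiarity with holomorphic functional calculus, while yours is self-contained at the level of spectral theory for self-adjoint operators. Note also that the paper's route does not use self-adjointness at all, whereas yours does (through the isometric bound $\|f(A)\|=\sup_{\sigma(A)}|f|$).
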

\begin{proof}
Since $\log t$ can be extended to a holomorphic function on $\mathbb{C}\setminus (-\infty, 0)$, the statement follows directly from the Dunford-Schwarz operator calculus.
\end{proof}

\section{Matrix orthogonal polynomials on the unit circle}

We begin by recalling some basic facts from~\cite{DPS:08} for
 convenience of the reader.
Let $\sigma\in\textsf{P}_{\ell}(\mathbb{T})$ be a matrix probability measure such that $\det(\mathbf{M}(\sigma,\zeta))\text{tr}(d\sigma(\zeta))$ has infinite Borel support. We define right and left monic orthogonal matrix polynomials $\Phi_n^R, \Phi_n^L$ by
applying the Gram--Schmidt procedure in $C(\mathbb{T}, {\mathcal M}_\ell)$ with respect to the `inner products' \eqref{ileft} and  \eqref{iright} to the sequence
$\{{\bf 1}, z {\bf 1},  z^2 {\bf 1}, \ldots \}$.
In other words,  $\Phi_n^R$ is the unique
matrix polynomial $z^n {\bf 1} + \textit{lower order terms}$ \/ satisfying
the orthogonality conditions
\begin{equation} \label{ortR}
0=\ang{z^k{\bf 1}, \Phi_n^R}_R =\int \left(z^k{\bf 1}\right)^\dagger\, d\sigma(x) \Phi_n^R, \qquad k=0,1,\dots, n-1.
\end{equation}
Similarly $\Phi_n^L$ is the unique
matrix polynomial $z^n {\bf 1} + \textit{lower order terms}$ \/ satisfying
\begin{equation}\label{leftmatrixpoly}
0=\ang{z^k{\bf 1},\Phi_n^L}_L:=\int \Phi_n^L\, d\sigma(x) \left(z^k{\bf 1}\right)^\dagger, \qquad k=0,1,\dots, n-1.
\end{equation}

The normalized orthogonal matrix polynomials are defined by
\begin{equation}\label{monicandnorm}
\varphi_0^L=\varphi_0^R={\bf 1},\qquad
\varphi_n^L = \kappa_n^L \Phi_n^L \qquad \text{ and } \quad \varphi_n^R =
\Phi_n^R\kappa_n^R
\end{equation}
where the $\kappa$'s are defined according to the normalization
conditions
\begin{equation}\label{ortcond}
\ang{\varphi_n^R,\varphi_m^R}_R=\delta_{nm}{\bf 1},
\qquad
\ang{\varphi_n^L,\varphi_m^L}_L=\delta_{nm}{\bf 1},
\end{equation}
along with the following positivity conditions:
\begin{equation}\label{NormCond}
\kappa_{n+1}^L (\kappa_n^L)^{-1} \succ {\bf 0} \quad \text{and} \quad
(\kappa_n^R)^{-1}
\kappa_{n+1}^R \succ {\bf 0}.
\end{equation}
Note that the $\kappa_{n}^L$ are determined by the normalization
condition up to multiplication on the left by unitary matrices.
It can be shown that these
unitaries can always be uniquely chosen so as to satisfy
\eqref{NormCond}, see~\cite{DPS:08}.

Now define
\[
\rho_n^L :=\kappa_n^L (\kappa_{n+1}^L)^{-1} \qquad \text{and} \quad
\rho_n^R :=
(\kappa_{n+1}^R)^{-1} \kappa_n^R.
\]
Being inverses of positives matrices, $\rho_n^L$ and $
\rho_n^R $ are positive definite as well.
In particular, we have that
\begin{equation}\label{kappaformula}
\kappa_n^L = (\rho_{0}^L \cdots \rho_{n-1}^L)^{-1} \quad \text{and}
\quad \kappa_n^R =
(\rho_{n-1}^R \cdots \rho_{0}^R)^{-1}.
\end{equation}

In the matrix case as well as in the scalar case we have the Szeg\H{o} recursion.
Before stating it,  we recall that, for a matrix polynomial $P_n$ of
degree $n$, we define the reversed polynomial $P_n^*$ by~(\ref{reverseP}):
$P_n^*(z) = z^n P_n(1/\bar{z})^\dagger.$

\begin{theorem}[\cite{DPS:08}]\label{szegoth}
There is a sequence of contractive matrices $\alpha_n$ in ${\mathcal M}_\ell$
such that
\begin{eqnarray}
z \varphi_n^L - \rho_n^L \varphi_{n+1}^L & = \alpha_n^\dagger
\varphi_n^{R,*},  \label{S1}\\
z \varphi_n^R - \varphi_{n+1}^R \rho_n^R & = \varphi_n^{L,*}
\alpha_n^\dagger, \label{S2}
\end{eqnarray}
where $\rho_n^L$ and $\rho_n^R$ are defined as follows
\begin{equation}\label{rhoalpha4}
\rho_n^L = ({\bf 1} - \alpha_n^\dagger \alpha_n)^
{1/2},\qquad \rho_n^R = ({\bf 1} -\alpha_n \alpha_n^\dagger)^{1/2}.
\end{equation}
\end{theorem}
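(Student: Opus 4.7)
The plan is to prove (S1) in detail and (S2) analogously; the main obstacle will be verifying that the same matrix $\alpha_n$ appears in both recursions. Let $Q(z):=z\varphi_n^L(z) - \rho_n^L \varphi_{n+1}^L(z)$. From $\rho_n^L = \kappa_n^L (\kappa_{n+1}^L)^{-1}$ together with $\varphi_m^L = \kappa_m^L z^m + (\text{lower})$, the degree-$(n{+}1)$ terms cancel, so $Q \in \mathcal{P}_n$.

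Next I verify $\ang{Q, z^k \mathbf{1}}_L = 0$ for $k = 1, \ldots, n$. The contribution from $\rho_n^L \varphi_{n+1}^L$ vanishes because $\varphi_{n+1}^L$ is L-orthogonal to $\mathcal{P}_n$, while the contribution from $z\varphi_n^L$ simplifies (using $|z|=1$) to $\ang{\varphi_n^L, z^{k-1}\mathbf{1}}_L$, which vanishes for $k-1<n$. I then identify the space of all $Q\in\mathcal{P}_n$ obeying these orthogonality conditions with the left $\mathcal{M}_\ell$-module $\{M \varphi_n^{R,*} : M \in \mathcal{M}_\ell\}$. One inclusion is direct: using $\varphi_n^{R,*}(z)^\dagger = z^{-n}\varphi_n^R(z)$ on $\mathbb{T}$, one finds $\ang{M \varphi_n^{R,*}, z^k \mathbf{1}}_L = \ang{z^{n-k} \mathbf{1}, \varphi_n^R}_R M^\dagger = 0$ for $1 \le k \le n$. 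Equality then comes from dimension counting: the L-orthogonal complement of $\text{span}\{z\mathbf{1},\ldots, z^n \mathbf{1}\}$ in $\mathcal{P}_n$ has complex dimension $(n+1)\ell^2 - n\ell^2 = \ell^2$ by non-degeneracy of the L-form, while $\{M \varphi_n^{R,*}\}$ also has dimension $\ell^2$ since $\varphi_n^{R,*}(0) = \kappa_n^R$ is invertible. Therefore $Q = \alpha_n^\dagger \varphi_n^{R,*}$ for a unique $\alpha_n \in \mathcal{M}_\ell$, which is (S1).

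To extract $\rho_n^L = (\mathbf{1} - \alpha_n^\dagger \alpha_n)^{1/2}$, I compute $\ang{z\varphi_n^L, z\varphi_n^L}_L$ in two ways. Directly it equals $\ang{\varphi_n^L, \varphi_n^L}_L = \mathbf{1}$. Expanding via $z\varphi_n^L = \rho_n^L \varphi_{n+1}^L + \alpha_n^\dagger \varphi_n^{R,*}$, the cross-terms vanish (since $\varphi_n^{R,*}\in\mathcal{P}_n$ is L-orthogonal to $\varphi_{n+1}^L$) and the diagonal terms yield $(\rho_n^L)^2 + \alpha_n^\dagger \ang{\varphi_n^{R,*}, \varphi_n^{R,*}}_L \alpha_n$; here $\ang{\varphi_n^{R,*}, \varphi_n^{R,*}}_L = \ang{\varphi_n^R, \varphi_n^R}_R = \mathbf{1}$ follows from $\varphi_n^{R,*}(z) = z^n \varphi_n^R(z)^\dagger$ on $\mathbb{T}$. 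Equating yields $\alpha_n^\dagger \alpha_n = \mathbf{1} - (\rho_n^L)^2 \preceq \mathbf{1}$, so $\alpha_n$ is contractive and $\rho_n^L$ takes the stated form.

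The analogous argument, now using the R-inner product on $z\varphi_n^R - \varphi_{n+1}^R \rho_n^R$, produces (S2) with some $\beta_n \in \mathcal{M}_\ell$ and $\rho_n^R = (\mathbf{1} - \beta_n \beta_n^\dagger)^{1/2}$. The crux is to prove $\alpha_n = \beta_n$. My plan is to apply the degree-$(n{+}1)$ Szeg\H{o} dual to (S1), obtaining the companion identity $\varphi_n^{L,*} = \varphi_{n+1}^{L,*} \rho_n^L + z \varphi_n^R \alpha_n$, and then substitute the R-recursion into the $z\varphi_n^R$ factor. Matching the resulting polynomial identity against the $*$-dual of the R-recursion (while keeping track of the commutation among the $\kappa_n$'s enforced by the normalization convention \eqref{NormCond}) forces $\alpha_n = \beta_n$; with this identification $\rho_n^R = (\mathbf{1} - \alpha_n \alpha_n^\dagger)^{1/2}$ as claimed.
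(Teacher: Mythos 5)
First, note that the paper itself offers no proof of Theorem~\ref{szegoth}: it is imported verbatim from \cite{DPS:08}, so there is no in-paper argument to measure yours against. Judged on its own terms, your derivation of \eqref{S1} is essentially the standard one and is correct: the cancellation of the $z^{n+1}$ terms in $Q=z\varphi_n^L-\rho_n^L\varphi_{n+1}^L$ via $\rho_n^L=\kappa_n^L(\kappa_{n+1}^L)^{-1}$, the L-orthogonality of $Q$ to $z\mathbf{1},\dots,z^n\mathbf{1}$, the identification of that orthogonal complement with the module $\{M\varphi_n^{R,*}\}$ by an $\ell^2$-dimension count (using that $\varphi_n^{R,*}(0)$ is invertible --- it equals $(\kappa_n^R)^\dagger$ rather than $\kappa_n^R$, a harmless slip), and the Pythagorean identity $\mathbf{1}=(\rho_n^L)^2+\alpha_n^\dagger\ang{\varphi_n^{R,*},\varphi_n^{R,*}}_L\alpha_n$ giving contractivity and \eqref{rhoalpha4} for $\rho_n^L$ are all sound.

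The genuine gap is exactly where you place the crux: the identification $\alpha_n=\beta_n$. Your plan --- dualize \eqref{S1} to $\varphi_n^{L,*}=\varphi_{n+1}^{L,*}\rho_n^L+z\varphi_n^R\alpha_n$, substitute the right recursion, and ``match'' while tracking the $\kappa$'s --- is not carried out, and as sketched it does not obviously close. Carrying out the substitution gives $\varphi_n^{L,*}(\mathbf{1}-\beta_n^\dagger\alpha_n)=\varphi_{n+1}^{L,*}\rho_n^L+\varphi_{n+1}^R\rho_n^R\alpha_n$, and pairing with $\varphi_n^{L,*}$ in $\ang{\cdot,\cdot}_R$ yields only $\beta_n^\dagger\alpha_n=\alpha_n^\dagger\alpha_n$, which forces $\beta_n=\alpha_n$ only when $\alpha_n$ is invertible; moreover, any appeal to \eqref{parametersformula} would be circular, since that formula is itself a consequence of the theorem. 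The clean repair is short and avoids all bookkeeping: pair \eqref{S1} with $\varphi_n^{R,*}$ in $\ang{\cdot,\cdot}_L$ to get $\alpha_n^\dagger=\ang{\varphi_n^{R,*},z\varphi_n^L}_L=\int z^{1-n}\varphi_n^L\,d\sigma\,\varphi_n^R$ (using $\varphi_n^{R,*}(z)^\dagger=\overline{z}^{\,n}\varphi_n^R(z)$ on $\mathbb{T}$ and $\ang{\varphi_n^{R,*},\varphi_n^{R,*}}_L=\mathbf{1}$), and pair the right recursion with $\varphi_n^{L,*}$ in $\ang{\cdot,\cdot}_R$ to get $\beta_n^\dagger=\ang{\varphi_n^{L,*},z\varphi_n^R}_R=\int z^{1-n}\varphi_n^L\,d\sigma\,\varphi_n^R$. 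The two mixed moments coincide, whence $\alpha_n=\beta_n$ unconditionally; with that, your proof is complete.
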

Setting $z=0$ in \eqref{S1} and using \eqref{monicandnorm}, we derive the following formulas for the parameters:
\begin{equation}\label{parametersformula}
\alpha_n=-(\kappa_n^R)^{-1}\mathbf{\Phi}^L_{n+1}(0)^\dagger (\kappa_n^L)^\dagger=
-(\kappa_n^R)^\dagger\mathbf{\Phi}^R_{n+1}(0)^\dagger(\kappa_n^L)^{-1} .
\end{equation}
Alternatively, one can also set $z=0$ in formulas (3.11) of \cite{DPS:08}.
\begin{lemma}\label{LeftAndRight} The left and right monic orthogonal polynomials of $\overline{\sigma}$ and $\sigma$ are related by
\begin{equation}\label{monicrel}
\mathbf{\Phi}_n^L(e^{i\theta},\overline{\sigma}){=}\mathbf{\Phi}_n^R(e^{-i\theta},\sigma)^\dagger.
\end{equation}
\end{lemma}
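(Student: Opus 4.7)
The plan is to verify that the function $Q_n(y) := \Phi_n^L(\bar y,\overline{\sigma})^\dagger$ (viewed on $\mathbb{T}$) is a monic right orthogonal polynomial of degree $n$ with respect to $\sigma$, and then invoke uniqueness.

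First I would write out the defining left orthogonality conditions for $\Phi_n^L(\cdot,\overline{\sigma})$ from \eqref{leftmatrixpoly}:
\[
\int_{\mathbb{T}}\Phi_n^L(x,\overline{\sigma})\,d\overline{\sigma}(x)\,x^{-k}\mathbf{1}=0,\qquad k=0,1,\dots,n-1,
\]
using that on $\mathbb{T}$ we have $(x^k\mathbf{1})^\dagger=x^{-k}\mathbf{1}$. Next I would apply the change of variable $x=\bar y$ dictated by $\overline{\sigma}(E)=\sigma(\overline{E})$. For simple functions this identity is immediate by matching indicator functions, and the general case follows by approximation. After the substitution, the condition becomes
\[
\int_{\mathbb{T}}\Phi_n^L(\bar y,\overline{\sigma})\,d\sigma(y)\,y^{k}\mathbf{1}=0,\qquad k=0,1,\dots,n-1.
\]

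Now I would take the Hermitian conjugate of the whole identity. Since $d\sigma$ takes values in self-adjoint matrices, $(d\sigma)^\dagger=d\sigma$, and $(y^k\mathbf{1})^\dagger=y^{-k}\mathbf{1}=(y^{-k}\mathbf{1})$. This transforms the displayed equation into
\[
\int_{\mathbb{T}}(y^{k}\mathbf{1})^\dagger\,d\sigma(y)\,\Phi_n^L(\bar y,\overline{\sigma})^\dagger=0,\qquad k=0,1,\dots,n-1,
\]
which is precisely the system of right orthogonality relations \eqref{ortR} satisfied by $\Phi_n^R(y,\sigma)$, with the unknown replaced by $Q_n(y):=\Phi_n^L(\bar y,\overline{\sigma})^\dagger$.

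Writing $\Phi_n^L(z,\overline{\sigma})=z^n\mathbf{1}+c_{n-1}z^{n-1}+\cdots+c_0$, the function $Q_n$ on $\mathbb{T}$ equals
\[
Q_n(y)=y^n\mathbf{1}+y^{n-1}c_{n-1}^\dagger+\cdots+c_0^\dagger,
\]
so $Q_n$ is in fact a matrix polynomial of degree $n$ with leading coefficient $\mathbf{1}$. By the uniqueness of the monic right orthogonal polynomial of a given degree with respect to $\sigma$ (which holds by the assumption that the right Gram form is positive definite, cf.\ \eqref{matrixpoly}), we conclude $Q_n(y)=\Phi_n^R(y,\sigma)$. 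Taking the Hermitian conjugate and setting $y=e^{-i\theta}$ yields $\Phi_n^L(e^{i\theta},\overline{\sigma})=\Phi_n^R(e^{-i\theta},\sigma)^\dagger$.

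The only subtle point is the manipulation of Hermitian conjugates of matrix integrals with factors on both sides of $d\sigma$; so long as one respects the reversal of order under $\dagger$, everything collapses to a routine verification. No deeper ingredient is required.
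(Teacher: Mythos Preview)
Your proof is correct and follows essentially the same strategy as the paper's: verify that the candidate satisfies the appropriate orthogonality relations via the change of variable $x\mapsto\bar x$ and Hermitian conjugation, then invoke uniqueness of the monic orthogonal polynomial. The only difference is direction: the paper starts from $\Phi_n^R(\cdot,\sigma)$ and shows that $(\Phi_n^R(\bar z,\sigma))^\dagger$ satisfies the \emph{left} orthogonality relations for $\overline{\sigma}$, whereas you start from $\Phi_n^L(\cdot,\overline{\sigma})$ and show that $\Phi_n^L(\bar y,\overline{\sigma})^\dagger$ satisfies the \emph{right} relations for $\sigma$.
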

\begin{proof} For $k<n$ we have,  by \eqref{ortR},
\begin{multline*}
0 =  \left(\int \left(z^k{\bf 1}\right)^\dagger\, d\sigma(z) \Phi_n^R(z,\sigma)\right)^\dagger =
\int \left(\Phi_n^R(z,\sigma)\right)^\dagger\, d\sigma(z) \left(z^k{\bf 1}\right)=
\int \left(\Phi_n^R(\overline{z},\sigma)\right)^\dagger\, d\sigma(\overline{z}) \left(\overline{z}^k{\bf 1}\right)
\\=\int \left(\Phi_n^R(\overline{z},\sigma)\right)^\dagger\, d\overline{\sigma}({z}) \left({z}^k{\bf 1}\right)^\dagger=\int \Phi_n^L({z},\overline{\sigma})\, d\overline{\sigma}({z}) \left({z}^k{\bf 1}\right)^\dagger,
\end{multline*}
which implies \eqref{monicrel} by \eqref{leftmatrixpoly}.
\end{proof}

\begin{proposition}\label{parametersofsymmeasure} If $\{\alpha_k\}_{k\geq 0}$ are the parameters of $\sigma$, then $\{\alpha_k^\dagger\}_{k\geq 0}$ are the parameters of $\overline{\sigma}$.
\end{proposition}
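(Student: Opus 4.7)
The plan is to read the result off directly from formula \eqref{parametersformula}, which expresses $\alpha_n$ in terms of the monic polynomials and the normalization factors $\kappa_n^L,\kappa_n^R$, together with Lemma \ref{LeftAndRight}. Since $0$ is fixed by $z\mapsto\overline z$, Lemma \ref{LeftAndRight} specializes at once to $\mathbf{\Phi}_{n+1}^L(0,\overline\sigma)^\dagger=\mathbf{\Phi}_{n+1}^R(0,\sigma)$.

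The more subtle step is to establish the transfer identities $\kappa_n^L(\overline\sigma)=\kappa_n^R(\sigma)^\dagger$ and $\kappa_n^R(\overline\sigma)=\kappa_n^L(\sigma)^\dagger$. For the first, I will substitute $\mathbf{\Phi}_n^L(z,\overline\sigma)=\mathbf{\Phi}_n^R(\overline z,\sigma)^\dagger$ into the left normalization $\int\varphi_n^L(z,\overline\sigma)\,d\overline\sigma(z)\,\varphi_n^L(z,\overline\sigma)^\dagger=\mathbf{1}$ and apply the change of variable $w=\overline z$, which sends $d\overline\sigma(z)$ to $d\sigma(w)$. This gives $\kappa_n^L(\overline\sigma)^\dagger\kappa_n^L(\overline\sigma)=M_n^{-1}$ where
\[
M_n:=\int_{\mathbb{T}}\mathbf{\Phi}_n^R(w,\sigma)^\dagger\,d\sigma(w)\,\mathbf{\Phi}_n^R(w,\sigma),
\]
while the right normalization of $\sigma$ gives $\kappa_n^R(\sigma)\kappa_n^R(\sigma)^\dagger=M_n^{-1}$. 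Thus both $\kappa_n^L(\overline\sigma)$ and $\kappa_n^R(\sigma)^\dagger$ are candidate matrices $X$ solving $X^\dagger X=M_n^{-1}$ with $X_0=\mathbf{1}$; one then checks that $\kappa_n^R(\sigma)^\dagger$ satisfies the left positivity condition of \eqref{NormCond}, since the ratio on consecutive indices is the adjoint of the positive ratio $(\kappa_n^R(\sigma))^{-1}\kappa_{n+1}^R(\sigma)\succ\mathbf{0}$. Uniqueness of the positive square root at each inductive step then forces the two sequences to coincide, and the companion identity is obtained by the symmetric argument applied to the right inner product.

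Plugging the transfer identities and the boundary-value identity into the first expression in \eqref{parametersformula} yields
\[
\alpha_n(\overline\sigma)=-\kappa_n^R(\overline\sigma)^{-1}\mathbf{\Phi}_{n+1}^L(0,\overline\sigma)^\dagger\kappa_n^L(\overline\sigma)^\dagger=-(\kappa_n^L(\sigma)^{-1})^\dagger\,\mathbf{\Phi}_{n+1}^R(0,\sigma)\,\kappa_n^R(\sigma),
\]
and this is precisely the adjoint of the second expression in \eqref{parametersformula} for $\alpha_n(\sigma)$, so $\alpha_n(\overline\sigma)=\alpha_n(\sigma)^\dagger$. The main obstacle is the uniqueness step for the $\kappa_n$: the normalization integrals alone determine them only up to unitary factors, so the triangular positivity condition \eqref{NormCond} must be invoked carefully (by induction on $n$) to pin down the correct factor. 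Once that is in place, everything else is routine substitution.
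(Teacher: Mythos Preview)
Your proposal is correct and follows the same overall inductive strategy as the paper: use Lemma~\ref{LeftAndRight} to get $\mathbf{\Phi}_{n+1}^L(0,\overline\sigma)^\dagger=\mathbf{\Phi}_{n+1}^R(0,\sigma)$, establish the transfer identities $\kappa_n^L(\overline\sigma)=\kappa_n^R(\sigma)^\dagger$ and $\kappa_n^R(\overline\sigma)=\kappa_n^L(\sigma)^\dagger$, and then read off $\alpha_n(\overline\sigma)=\alpha_n(\sigma)^\dagger$ from \eqref{parametersformula}.

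The one genuine difference is in how the $\kappa$-transfer identities are proved. The paper runs the induction on the statement $\alpha_k(\overline\sigma)=\alpha_k(\sigma)^\dagger$ itself: once this holds for $k<n$, one has $\rho_k^L(\overline\sigma)=\rho_k^R(\sigma)$ and $\rho_k^R(\overline\sigma)=\rho_k^L(\sigma)$ immediately from \eqref{rhoalpha4}, and then the product formula \eqref{kappaformula} yields the $\kappa$-identities with no further work. You instead characterize $\kappa_n^L(\overline\sigma)$ and $\kappa_n^R(\sigma)^\dagger$ as solutions of the same equation $X^\dagger X=M_n^{-1}$ and invoke the positivity condition \eqref{NormCond} plus polar-decomposition uniqueness to force them to coincide (inductively). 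Your route is a bit more self-contained in that it does not appeal to the Szeg\H{o} recursion or \eqref{kappaformula}; the paper's route is shorter because \eqref{kappaformula} does the uniqueness bookkeeping for free.
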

\begin{proof} By \eqref{monicrel}, the matrix coefficients of the polynomial
$\mathbf{\Phi}_n^L(e^{i\theta},\overline{\sigma})$ are the matrices adjoint  to the coefficients of the polynomial $\mathbf{\Phi}_n^R(e^{i\theta},\sigma)$. In particular,
\begin{equation}\label{4589}
\mathbf{\Phi}_{n+1}^L(0,\overline{\sigma})=\mathbf{\Phi}_{n+1}^R(0,\sigma)^\dagger.
\end{equation}

Since $\kappa_0^R=\kappa_0^L=\mathbf{1}$, we see that
\begin{equation*}
\alpha_0(\overline{\sigma})=-\mathbf{\Phi}^L_{1}(\overline{0,\sigma})^\dagger=
-\mathbf{\Phi}^R_{1}({0,\sigma})=\alpha_0(\sigma)^\dagger.
\end{equation*}
Suppose that we already proved that $\alpha_k(\overline{\sigma})=\alpha_k(\sigma)^\dagger$ for $k<n$. Then, by the induction hypothesis and by \eqref{kappaformula},
\begin{equation}\label{leftformright5063}
\kappa_n^R(\overline{\sigma})=\kappa_n^L({\sigma})^\dagger,\qquad \kappa_n^L(\overline{\sigma})=\kappa_n^R({\sigma})^\dagger.
\end{equation}
It follows that
\begin{multline*}
\alpha_n(\overline{\sigma})=-(\kappa_n^R(\overline{\sigma}))^{-1}\mathbf{\Phi}_{n+1}^L
(0,\overline{\sigma})^\dagger(\kappa_n^L(\overline{\sigma}))^\dagger=
-(\kappa_n^L({\sigma})^\dagger)^{-1}\mathbf{\Phi}_{n+1}^R(0,\sigma)\kappa_n^R({\sigma})\\=
\left(-\kappa_n^R({\sigma})^\dagger\mathbf{\Phi}_{n+1}^R(0,\sigma)^\dagger(\kappa_n^L({\sigma}))^{-1}\right)^\dagger=
\alpha_n(\sigma)^\dagger,
\end{multline*}
see \eqref{parametersformula}.
\end{proof}
\begin{corollary}\label{eqnumberfive8} The left and right orthogonal polynomials are related by 
formula $\eqref{lefttoright45}$.
\end{corollary}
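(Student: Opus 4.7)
The plan is to assemble the corollary from two ingredients already in hand: the monic-polynomial identity \eqref{monicrel} from Lemma~\ref{LeftAndRight}, and the relation \eqref{leftformright5063} between the normalization matrices $\kappa_n^{L/R}$ for $\sigma$ and $\overline{\sigma}$ that was established in the course of the proof of Proposition~\ref{parametersofsymmeasure}.

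Concretely, I would start from the definitions \eqref{monicandnorm} of the normalized polynomials, namely $\varphi_n^L = \kappa_n^L\, \Phi_n^L$ on the left side and $\varphi_n^R = \Phi_n^R\, \kappa_n^R$ on the right side. Evaluating the left normalization at $\overline{\sigma}$ and at the point $e^{i\theta}$, one has
\[
\varphi_n^L(e^{i\theta},\overline{\sigma}) \;=\; \kappa_n^L(\overline{\sigma})\,\Phi_n^L(e^{i\theta},\overline{\sigma}).
\]
Substitute \eqref{monicrel} for the monic factor and the second equality of \eqref{leftformright5063}, $\kappa_n^L(\overline{\sigma})=\kappa_n^R(\sigma)^\dagger$, for the prefactor. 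The right-hand side becomes
\[
\kappa_n^R(\sigma)^\dagger\,\Phi_n^R(e^{-i\theta},\sigma)^\dagger \;=\;\bigl(\Phi_n^R(e^{-i\theta},\sigma)\,\kappa_n^R(\sigma)\bigr)^\dagger \;=\;\varphi_n^R(e^{-i\theta},\sigma)^\dagger,
\]
using only the anti-multiplicativity of the Hermitian adjoint and the definition \eqref{monicandnorm}. This is exactly \eqref{lefttoright45}.

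There is essentially no obstacle here: the work has been done in Lemma~\ref{LeftAndRight} (which upgraded the defining orthogonality conditions under $z\mapsto\overline{z}$ and $\sigma\mapsto\overline{\sigma}$ to a pointwise monic identity) and in Proposition~\ref{parametersofsymmeasure} (whose induction supplied the matching identities for the normalizers $\kappa_n^{L/R}$). The corollary is just the compatibility of these two statements with the definition of the normalized orthogonal polynomials, so the only subtlety to watch is the order of multiplication—$\kappa_n^L$ acts from the left and $\kappa_n^R$ from the right, so the adjoint correctly exchanges the two conventions, which is precisely what makes \eqref{lefttoright45} hold as stated.
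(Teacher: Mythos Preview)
Your proof is correct and is essentially identical to the paper's own argument: the paper writes the single chain $\varphi_n^L(e^{i\theta},\overline{\sigma})=\kappa_n^L(\overline{\sigma})\Phi_n^L(e^{i\theta},\overline{\sigma})=\kappa_n^R(\sigma)^\dagger\Phi_n^R(e^{-i\theta},\sigma)^\dagger=\varphi_n^R(e^{-i\theta},\sigma)^\dagger$, invoking exactly \eqref{monicrel} and the second identity in \eqref{leftformright5063}, which is precisely your argument.
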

\begin{proof} We have
\begin{equation*}
\varphi_n^L(e^{i\theta}, \overline{\sigma})=\kappa_n^L(\overline{\sigma})\mathbf{\Phi}_n^L(e^{i\theta}, \overline{\sigma})=\kappa_n^R(\sigma)^\dagger\mathbf{\Phi}_n^R(e^{-i\theta},{\sigma})^\dagger=
\varphi_n^R(e^{-i\theta},{\sigma})^\dagger.
\end{equation*}
\end{proof}


We next recall the notion of Bernstein-Szeg\H{o} approximation.
We begin with a list of properties of matrix orthogonal polynomials.
\begin{theorem}[{\cite[Theorem 3.8]{DPS:08}}] \label{PropMOP}
The polynomials $\varphi^L$, $\varphi^R$ satisfy the following conditions:
\begin{enumerate}
\item[(i)] For $z\in{\mathbb T}$, all of $\varphi_n^{R,*}(z)$,
$\varphi_n^{L,*}(z)$, $\varphi_n^R(z)$,
$\varphi_n^L(z)$ are invertible.

\item[(ii)] For $z\in{\mathbb D}$, $\varphi_n^{R,*}(z)$ and
$\varphi_n^{L,*}(z)$ are invertible.

\item[(iii)] For any $z\in{\mathbb T}$,
\begin{equation}\label{LeftRight}
\varphi_n^R(z) \varphi_n^R(z)^\dagger =
\varphi_n^L (z)^\dagger
\varphi_n^L (z)~.
\end{equation}
\end{enumerate}
\end{theorem}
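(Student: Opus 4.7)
The plan is to establish (iii) first, and then to feed it into a joint induction for (i) and (ii). As preliminaries I would derive the reversed (starred) forms of the Szeg\H{o} recursions \eqref{S1}, \eqref{S2}: substituting $z\mapsto 1/\bar z$, taking $\dagger$, and multiplying by $z^{n+1}$ yields
\[\rho_n^R\,\varphi_{n+1}^{R,*}(z) = \varphi_n^{R,*}(z) - z\,\alpha_n\,\varphi_n^L(z), \qquad \varphi_{n+1}^{L,*}(z)\,\rho_n^L = \varphi_n^{L,*}(z) - z\,\varphi_n^R(z)\,\alpha_n.\]
Together with \eqref{S1}, \eqref{S2}, these give four recursions. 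The key algebraic tools will be the intertwinings $\alpha_n\rho_n^L=\rho_n^R\alpha_n$ and $\alpha_n^\dagger\rho_n^R=\rho_n^L\alpha_n^\dagger$ (immediate from the singular value decomposition of $\alpha_n$ together with \eqref{rhoalpha4}), plus the identity $\alpha_n^\dagger(\rho_n^R)^{-2}\alpha_n=(\rho_n^L)^{-2}-\mathbf{1}$ and its mirror. The standing assumption that $\det(\mathbf{M}(\sigma,\zeta))\tr(d\sigma)$ has infinite support forces $\|\alpha_n\|<1$ strictly, so $\rho_n^L,\rho_n^R\succ\mathbf{0}$.

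To prove (iii), I observe that on $\mathbb{T}$ one has $\varphi_n^{R,*}(z)=z^n\varphi_n^R(z)^\dagger$ and $\varphi_n^{L,*}(z)=z^n\varphi_n^L(z)^\dagger$, so (iii) is equivalent to the polynomial identity $\varphi_n^R(z)\varphi_n^{R,*}(z)=\varphi_n^{L,*}(z)\varphi_n^L(z)$. I would induct on $n$; the case $n=0$ is immediate since $\varphi_0^L=\varphi_0^R=\mathbf{1}$. For the step, I would substitute the four recursions into $\varphi_{n+1}^R\varphi_{n+1}^{R,*}$ and $\varphi_{n+1}^{L,*}\varphi_{n+1}^L$, expand, and regroup using the intertwinings. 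All cross terms cancel pairwise, leaving
\[\varphi_{n+1}^R\varphi_{n+1}^{R,*} - \varphi_{n+1}^{L,*}\varphi_{n+1}^L = z\bigl(\varphi_n^R\varphi_n^{R,*} - \varphi_n^{L,*}\varphi_n^L\bigr),\]
which vanishes by the inductive hypothesis.

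For (i) and (ii), I would run a second induction on $n$, assuming (i), (ii), (iii) at level $n$. Define
\[B(z) := \varphi_n^L(z)\,\varphi_n^{R,*}(z)^{-1},\]
analytic in a neighborhood of $\bar{\mathbb{D}}$ by inductive (ii). The polynomial form of (iii) also gives $B(z) = \varphi_n^{L,*}(z)^{-1}\varphi_n^R(z)$; a direct computation on $\mathbb{T}$, using the boundary relations above, then shows $B(z)^\dagger B(z) = \mathbf{1}$, i.e.\ $B$ is unitary on $\mathbb{T}$. Applying the scalar maximum modulus principle to $w^\dagger B(z)v$ for arbitrary unit vectors $v,w\in\mathbb{C}^\ell$ yields $\|B(z)\|\le 1$ on $\bar{\mathbb{D}}$. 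The starred recursions factor as
\[\rho_n^R\,\varphi_{n+1}^{R,*}(z) = \bigl(\mathbf{1} - z\,\alpha_n B(z)\bigr)\,\varphi_n^{R,*}(z), \qquad \varphi_{n+1}^{L,*}(z)\,\rho_n^L = \varphi_n^{L,*}(z)\bigl(\mathbf{1} - z\,B(z)\alpha_n\bigr),\]
and since $\|z\,\alpha_n B(z)\|\le\|\alpha_n\|<1$ on $\bar{\mathbb{D}}$ (and similarly on the right), both middle factors are invertible; combined with invertibility of $\rho_n^R$, $\rho_n^L$, $\varphi_n^{R,*}$, $\varphi_n^{L,*}$ this gives (ii) at level $n+1$. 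Invertibility of $\varphi_{n+1}^R$ and $\varphi_{n+1}^L$ on $\mathbb{T}$ then follows from the boundary identity $\varphi^{R,*}(z) = z^{n+1}\varphi^R(z)^\dagger$ and its $L$-counterpart, giving (i).

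The most delicate step will be the algebraic cancellation in the proof of (iii): eight matrix products must be expanded and simplified using the intertwinings in the correct order, and the collapse to precisely the scalar multiple $z$ of the previous identity is the bookkeeping that needs careful verification. A related subtlety is that the construction of $B(z)$ depends on $\varphi_n^{R,*}$ having no zeros in $\bar{\mathbb{D}}$, so the three statements must be run together through a single coupled induction; the observation $\varphi_n^L\varphi_n^{R,*,-1} = \varphi_n^{L,*,-1}\varphi_n^R$ (a consequence of (iii)) is what allows the same $B$ to handle both the $R$- and $L$-versions of the invertibility claim uniformly.
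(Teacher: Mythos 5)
Your argument is correct, but note that there is nothing in the paper to compare it against: Theorem \ref{PropMOP} is imported verbatim from \cite[Theorem 3.8]{DPS:08} and the authors give no proof, so yours is a genuinely self-contained reconstruction. The route you take is the natural one and, as far as the key mechanisms go, it is essentially the one underlying the original source: everything is driven by the Szeg\H{o} recursion of Theorem \ref{szegoth}, the intertwining $\alpha_n\rho_n^L=\rho_n^R\alpha_n$ (which the paper itself invokes when factoring \eqref{alphamatrix}), and the contractivity of the analytic matrix function $B(z)=\varphi_n^L(z)\varphi_n^{R,*}(z)^{-1}$ obtained from its unitarity on $\mathbb{T}$ via the maximum principle applied to the scalar functions $w^\dagger B(z)v$. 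I checked the two points you flagged as delicate and both go through. The starred recursions you derive from \eqref{S1}--\eqref{S2} by $z\mapsto 1/\bar z$, conjugation, and multiplication by $z^{n+1}$ are correct. In the induction for (iii), writing out the eight terms of $\varphi_{n+1}^R\varphi_{n+1}^{R,*}-\varphi_{n+1}^{L,*}\varphi_{n+1}^L$ and moving $(\rho_n^R)^{-2}$ past $\alpha_n$ via the intertwining, the $z^0$ and $z^2$ terms cancel exactly, and the $z^1$ terms collapse using $\alpha_n^\dagger(\rho_n^R)^{-2}\alpha_n=(\rho_n^L)^{-2}-\mathbf{1}$ (and its mirror) to precisely $z\bigl(\varphi_n^R\varphi_n^{R,*}-\varphi_n^{L,*}\varphi_n^L\bigr)$, as you claim. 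Two small points worth making explicit in a final write-up: the strict inequality $\Vert\alpha_n\Vert<1$ is what the positivity assumption \eqref{matrixpoly} buys you (it makes $\rho_n^{L},\rho_n^R$ invertible via \eqref{rhoalpha4}), and the analyticity of $B$ on a \emph{neighborhood} of $\overline{\mathbb{D}}$ uses both inductive (i) and (ii) together with the fact that $\det\varphi_n^{R,*}$ is a polynomial nonvanishing on the compact set $\overline{\mathbb{D}}$. Your logical architecture --- prove (iii) for all $n$ first as a pure polynomial identity needing no invertibility, then run the coupled induction for (i) and (ii) --- is sound and avoids any circularity.
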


Given a finite sequence $\{\alpha_j\}_{j=0}^{n-1}$ of contractive matrices,
we can always use the Szeg\H{o} recursion
to define the polynomials $\varphi_j^R, \varphi_j^L$ for $j=0,1,\dots, n$.
Analogously to the scalar case, let us define a measure
$d\mu_n$ on ${\mathbb T}$ by
\begin{equation}\label{eqszego1}
d\mu_n(\theta) = [\varphi_n^R(e^{i\theta})\varphi_n^R (e^{i\theta})^
\dagger]^{-1}
\, \frac{d\theta}{2\pi}.
\end{equation}
In view of~\eqref{LeftRight}, we also see that
\begin{equation}\label{eqszego2}
d\mu_n(\theta)= [\varphi_n^L (e^{i\theta})^\dagger \varphi_n^L (e^{i
\theta})]^{-1} \, \frac{d\theta}{2\pi}\, .
\end{equation}
Also, directly from the definition of the right orthogonal polynomials, we have
\begin{equation}\label{eqszego3}
d\mu_n(\theta)= [\varphi_n^{R,*} (e^{i\theta})^\dagger \varphi_n^{R,*} (e^{i
\theta})]^{-1} \, \frac{d\theta}{2\pi}.
\end{equation}
The measure $d\mu_n$ in~(\ref{eqszego3}) is called the {\em right Bernstein-Szeg\H{o} approximation\/}
to $\sigma$. The {\em left   Bernstein-Szeg\H{o} approximation\/} to $\sigma$ is given by
\begin{equation}\label{eqszego32}
d\mu_n^L(\theta)= [\varphi_n^{L,*} (e^{i\theta})\varphi_n^{L,*} (e^{i
\theta})^\dagger ]^{-1} \, \frac{d\theta}{2\pi}.
\end{equation}

Now we are in a position to formulate the main result of this section.

\begin{theorem}[\cite{DPS:08}]\label{BSconv}
The matrix-valued measure $d\mu_n$ is normalized
and its right matrix orthogonal polynomials for $j=0,\dots, n$ are
$\{\varphi_j^R\}_{j=0}^n$.
The Verblunsky coefficients for $d\mu_n$ are
\begin{equation}
\alpha_j (d\mu_n) = \begin{cases} \alpha_j, & j\leq n, \\
\bf{0}, & j\geq n+1.
\end{cases}
\end{equation}
Moreover, $*{-}\lim_{n \to \infty} d\mu_n =d \sigma$.
\end{theorem}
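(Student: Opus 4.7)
The plan is to compare $d\mu_n$ directly with the matrix probability measure $\widetilde\sigma_n \in \textsf{P}_\ell(\mathbb{T})$ that the matrix Verblunsky theorem of~\cite{DPS:08} assigns to the truncated parameter sequence $(\alpha_0, \ldots, \alpha_{n-1}, \mathbf{0}, \mathbf{0}, \ldots)$. Since every entry of this sequence is a strict contraction, $\widetilde\sigma_n$ admits right orthonormal polynomials $\psi_j^R$ of every degree, and the Szeg\H{o} recursion (Theorem~\ref{szegoth}) identifies them: $\psi_j^R = \varphi_j^R$ for $j \leq n$, since the recursion through index $n-1$ is driven by exactly the same parameters as for $\sigma$, while the vanishing of $\alpha_j$ for $j \geq n$ forces $\rho_j^R = \mathbf{1}$, so that $\psi_{n+k}^R(z) = z^k \varphi_n^R(z)$ for every $k \geq 0$. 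A direct computation of reversals then gives $\psi_{n+k}^{R,*}(z) = \varphi_n^{R,*}(z)$ for all $k\ge 0$.

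The heart of the argument is to leverage the orthonormality relations $\int \psi_{n+i}^{R,\dagger}\, d\widetilde\sigma_n\, \psi_{n+k}^R = \delta_{ik}\mathbf{1}$ at indices beyond $n$. Substituting $\psi_{n+i}^R(z) = z^i \varphi_n^R(z)$, whose adjoint on $|z|=1$ equals $z^{-i}\varphi_n^R(z)^\dagger$, yields
\[
\int_{\mathbb T} z^{k-i}\,\varphi_n^R(z)^\dagger\, d\widetilde\sigma_n(\theta)\,\varphi_n^R(z) \;=\; \delta_{ik}\mathbf{1}
\]
for all $i,k \geq 0$. As $k - i$ ranges over all of $\mathbb{Z}$, this asserts that every Fourier coefficient of the matrix-valued measure $d\eta := \varphi_n^R(z)^\dagger\, d\widetilde\sigma_n(z)\, \varphi_n^R(z)$ vanishes save for the $0$-th, which equals $\mathbf{1}$. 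Applied entrywise, the classical uniqueness of a complex Borel measure on $\mathbb{T}$ from its Fourier coefficients gives $d\eta = \mathbf{1}\,d\theta/(2\pi)$. Since $\varphi_n^R(z)$ is invertible on $\mathbb{T}$ by Theorem~\ref{PropMOP}(i), multiplying the identity for $d\eta$ on the left by $\varphi_n^R(z)^{-\dagger}$ and on the right by $\varphi_n^R(z)^{-1}$, both continuous on $\mathbb{T}$, produces
\[
d\widetilde\sigma_n \;=\; [\varphi_n^R(z)\varphi_n^R(z)^\dagger]^{-1}\,\frac{d\theta}{2\pi} \;=\; d\mu_n
\]
by~\eqref{eqszego1}. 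This single identification delivers normalization, the equality of the first $n$ right orthonormal polynomials, and the formula for the Verblunsky coefficients of $d\mu_n$ all at once.

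For $*\text{-}\lim_n d\mu_n = d\sigma$, the agreement of Verblunsky coefficients up through index $n-1$ forces $\int z^k\, d\mu_n = \int z^k\, d\sigma$ for $|k| \leq n$: these moments are determined by the inner products of the monomials $z^i\mathbf{1}$, $z^j\mathbf{1}$ for $0 \leq i,j \leq n$, which are in turn fixed by the common set of orthonormal polynomials $\{\varphi_j^R\}_{j=0}^n$. Approximating any continuous $f\colon \mathbb{T} \to \mathbb{C}$ uniformly by trigonometric polynomials via Fej\'er's theorem, and using that every entry $\mu_n^{ij}$ has total variation at most $\ell$, one obtains $\int f\, d\mu_n^{ij} \to \int f\, d\sigma_{ij}$ for each pair $(i,j)$, which is the $*$-weak convergence required by~\eqref{starconv47}. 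The most delicate step in this scheme is the passage from Fourier coefficients of $d\eta$ to the identification $d\eta = \mathbf{1}\,d\theta/(2\pi)$, which reduces to the scalar uniqueness statement applied entrywise; once that is secured, the subsequent inversion by the invertible continuous matrix-valued factors $\varphi_n^R(z)^{-\dagger}$ and $\varphi_n^R(z)^{-1}$ is purely algebraic.
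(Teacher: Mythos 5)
Your computations are individually correct, but the argument has a structural problem: it is circular relative to the theory it invokes. The paper offers no proof of Theorem~\ref{BSconv} at all --- it is quoted from~\cite{DPS:08} --- and in that source the logical order is the reverse of yours: the Bernstein--Szeg\H{o} theorem is proved first, by \emph{directly} verifying that $\{\varphi_j^R\}_{j=0}^n$ are orthonormal with respect to $d\mu_n$ (using the invertibility of $\varphi_n^{R,*}$ on the closed disc, so that the relevant integrands are boundary values of analytic functions and the integrals can be read off from Fourier coefficients), and the matrix Verblunsky theorem is then \emph{deduced} by taking $*$-weak limits of precisely these measures $d\mu_n$. Your proof opens by postulating a measure $\widetilde\sigma_n$ whose Verblunsky coefficients are $(\alpha_0,\dots,\alpha_{n-1},\mathbf 0,\mathbf 0,\dots)$; but the standard construction of such a measure \emph{is} $d\mu_n$ together with the theorem you are proving. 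Unless you supply an independent proof of the matrix Verblunsky theorem (e.g.\ via the Schur algorithm and the Geronimus relations), the existence of $\widetilde\sigma_n$ is exactly the content you are not entitled to assume, and the argument begs the question.

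Everything downstream of that assumption is sound and rather elegant: the observation that $\alpha_j=\mathbf 0$ for $j\ge n$ forces $\rho_j^R=\mathbf 1$ and hence $\psi_{n+k}^R(z)=z^k\varphi_n^R(z)$; the resulting computation showing that all Fourier coefficients of $d\eta=\varphi_n^R(z)^\dagger\,d\widetilde\sigma_n\,\varphi_n^R(z)$ vanish except the zeroth; the entrywise uniqueness argument giving $d\eta=\mathbf 1\,d\theta/(2\pi)$; the inversion by the continuous invertible factors $\varphi_n^R(z)^{-\dagger}$ and $\varphi_n^R(z)^{-1}$; and the moment-matching plus Fej\'er argument for $*$-weak convergence (which, given the first part of the theorem, is a legitimate self-contained proof of its final assertion). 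Note also that the statement as reproduced in the paper contains an off-by-one slip: with only $\alpha_0,\dots,\alpha_{n-1}$ given, the correct claim --- and what your argument actually yields --- is $\alpha_j(d\mu_n)=\alpha_j$ for $j\le n-1$ and $\mathbf 0$ for $j\ge n$.
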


Following \cite{DPS:08},  we associate the matrix
\begin{equation}\label{alphamatrix}
A^L(\alpha,z)=\begin{pmatrix}z(\rho^L)^{-1}&-(\rho^L)^{-1}\alpha^\dagger\\
-z(\rho^R)^{-1}\alpha&(\rho^R)^{-1}\end{pmatrix}
\end{equation}
to a given matrix parameter $\alpha$. Then
\begin{equation}\label{it6}
\begin{pmatrix}
\varphi_{n}^L\\
\varphi_{n}^{R,*}
\end{pmatrix}=A^L(\alpha_{n-1},z)\cdots A^L(\alpha_{0},z)
\begin{pmatrix}
\mathbf{1}\\
\mathbf{1}
\end{pmatrix} .
\end{equation}
Applying the adjoint $\dagger$ to both sides and taking the product
over $\alpha_j$ for $j=0, \ldots, n-1$, we obtain
\begin{equation*}
{\varphi_{n}^L}^{\dagger}\varphi_{n}^L+
{\varphi_{n}^{R,*}}^{\dagger}
\varphi_{n}^{R,*}=
\begin{pmatrix}
\mathbf{1}&
\mathbf{1}
\end{pmatrix}
{A^L(\alpha_{0},z)}^{\dagger}\cdots
{A^L(\alpha_{n-1},z)}^{\dagger}
A^L(\alpha_{n-1},z)\cdots A^L(\alpha_{0},z)
\begin{pmatrix}
\mathbf{1}\\
\mathbf{1}
\end{pmatrix} .
\end{equation*}
Note that \eqref{eqszego2} and \eqref{eqszego3} imply that the equality  \begin{equation}\label{importanteq}{\varphi_{n}^L}^{\dagger}\varphi_{n}^L=
{\varphi_{n}^{R,*}}^{\dagger}
\varphi_{n}^{R,*}
\end{equation}
holds  on the circle $\mathbb{T}$, implying that
\begin{equation}\label{idforBernstein}
{\varphi_{n}^{R,*}}^{\dagger}
\varphi_{n}^{R,*}=\frac{1}{2}\begin{pmatrix}
\mathbf{1}&
\mathbf{1}
\end{pmatrix}
{A^L(\alpha_{0},z)}^{\dagger}\cdots
{A^L(\alpha_{n-1},z)}^{\dagger}
A^L(\alpha_{n-1},z)\cdots A^L(\alpha_{0},z)
\begin{pmatrix}
\mathbf{1}\\
\mathbf{1}
\end{pmatrix} .
\end{equation}

Using the fact $\rho^R \alpha = \alpha \rho^L$, 
the matrix in \eqref{alphamatrix} can be factored as follows:
\begin{equation}\label{factoralpha}
A^L(\alpha,z)=\begin{pmatrix}(\rho^L)^{-1}&0\\0&(\rho^R)^{-1}\end{pmatrix}
\begin{pmatrix}z\mathbf{1}&-\alpha^\dagger\\-z\alpha&\mathbf{1}\end{pmatrix}=
\begin{pmatrix}z\mathbf{1}&-\alpha^\dagger\\-z\alpha&\mathbf{1}\end{pmatrix}\begin{pmatrix}(\rho^L)^{-1}&0\\0&(\rho^R)^{-1}\end{pmatrix}.
\end{equation}
To factor the non-diagonal matrix in \eqref{factoralpha}, we apply Schur's factorization
\begin{equation}\label{schurfact}
\begin{pmatrix}A&B\\C&D\end{pmatrix}=\begin{pmatrix}\mathbf{1}&0\\CA^{-1}&\mathbf{1}\end{pmatrix}
\begin{pmatrix}A&0\\0&D-CA^{-1}B\end{pmatrix}
\begin{pmatrix}\mathbf{1}&A^{-1}B\\0&\mathbf{1}\end{pmatrix}
\end{equation}
with
\begin{equation*}
\begin{matrix}A=z\mathbf{1}&B=-\alpha^\dagger\\C=-z\alpha&D=\mathbf{1}\end{matrix} .
\end{equation*}
Then
\begin{equation}\label{4factor}
A^L(\alpha,z)=\begin{pmatrix}(\rho^L)^{-1}&0\\0&(\rho^R)^{-1}\end{pmatrix}
\begin{pmatrix}\mathbf{1}&0\\-\alpha&\mathbf{1}\end{pmatrix}
\begin{pmatrix}z\mathbf{1}&0\\0&\mathbf{1}-\alpha\alpha^\dagger\end{pmatrix}
\begin{pmatrix}\mathbf{1}&-(z)^{-1}\alpha^\dagger\\0&\mathbf{1}\end{pmatrix}.
\end{equation}
Since $\det \rho^L=\det \rho^R$, we conclude that
\begin{equation}\label{detofA}
\det A^L(\alpha, z)=z^{\ell}.
\end{equation}
It is easy to check that
\[
\varphi_1^L(z)=(\rho_0^L)^{-1}(z-\alpha_0^\dagger),
\qquad
\varphi_1^R(z)=(z-\alpha_0^\dagger)(\rho_0^R)^{-1}.
\]
Further, forming the Szeg\H{o} dual, we obtain
\[
\varphi_1^{L,*}(z)=({\bf 1}-z\alpha_0)(\rho_0^L)^{-1},
\qquad
\varphi_1^{R,*}(z)=(\rho_0^R)^{-1}({\bf 1}-z\alpha_0).
\]
After pertinent multiplications, this produces
\[
\varphi_1^{R,*} (z)^\dagger \varphi_1^{R,*}(z)=
({\bf 1}-\overline{z}\alpha_0^\dagger)(\rho_0^R)^{-2}({\bf 1}-z\alpha_0),
\]

\[
\varphi_1^{L,*} (z) \varphi_1^{L,*}(z)^\dagger=
({\bf 1}-{z}\alpha_0)(\rho_0^L)^{-2}({\bf 1}-\overline{z}\alpha_0^\dagger).
\]
\begin{proposition}\label{formulaneq1} For every $\alpha\in{\mathcal M}_\ell$ and $z\in\mathbb{T}$
\begin{equation*}
({\bf 1}-\overline{z}\alpha^{\dag})({\bf 1}-\alpha\alpha^{\dag})^{-1}({\bf 1}-z\alpha)=\left[({\bf 1}-\overline{z}\alpha^{\dag})^{-1}+({\bf 1}-z\alpha)^{-1}-{\bf 1}\right]^{-1}.
\end{equation*}
\end{proposition}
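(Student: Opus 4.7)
The plan is to invert both sides and reduce the identity to a simple polynomial identity in $\alpha,\alpha^\dagger,z,\bar z$. Write $A^{-1}:=\mathbf{1}-\bar z\alpha^\dagger$ and $B^{-1}:=\mathbf{1}-z\alpha$, so the left-hand side factors as $A^{-1}(\mathbf{1}-\alpha\alpha^\dagger)^{-1}B^{-1}$ and its inverse is $B(\mathbf{1}-\alpha\alpha^\dagger)A$. Similarly, the right-hand side is $(A+B-\mathbf{1})^{-1}$, whose inverse is $A+B-\mathbf{1}$. Hence the proposition is equivalent to
$$B(\mathbf{1}-\alpha\alpha^\dagger)A=A+B-\mathbf{1},$$
and, after multiplying on the left by $B^{-1}$ and on the right by $A^{-1}$, to
$$\mathbf{1}-\alpha\alpha^\dagger=B^{-1}+A^{-1}-B^{-1}A^{-1}.$$

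Next, I would expand $B^{-1}A^{-1}=(\mathbf{1}-z\alpha)(\mathbf{1}-\bar z\alpha^\dagger)$. Here the condition $z\in\mathbb{T}$ enters decisively: since $|z|^2=1$, the product becomes $\mathbf{1}-z\alpha-\bar z\alpha^\dagger+\alpha\alpha^\dagger$. Substituting back yields
$$B^{-1}+A^{-1}-B^{-1}A^{-1}=(\mathbf{1}-z\alpha)+(\mathbf{1}-\bar z\alpha^\dagger)-(\mathbf{1}-z\alpha-\bar z\alpha^\dagger+\alpha\alpha^\dagger)=\mathbf{1}-\alpha\alpha^\dagger,$$
which is exactly what we needed. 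Reversing the reduction completes the proof.

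There is no serious obstacle here; the only thing to watch is that all inverses appearing make sense, which is why the non-commutativity of $\alpha$ and $\alpha^\dagger$ never causes trouble. Provided $\mathbf{1}-z\alpha$, $\mathbf{1}-\bar z\alpha^\dagger$, and $\mathbf{1}-\alpha\alpha^\dagger$ are invertible (as is the case for the strictly contractive parameters arising in the sequel, where the identity will be applied), the computation above is a one-line verification. The essential point of the proposition is simply that the unitarity $z\bar z=1$ permits the cross-term cancellation that reduces an expression built out of $\alpha$ and $\alpha^\dagger$ separately to one involving only $\alpha\alpha^\dagger$.
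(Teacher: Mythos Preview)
Your proof is correct and, in fact, cleaner than the paper's. Both arguments hinge on the same essential observation---that $z\bar z=1$ makes the cross term in $(\mathbf{1}-z\alpha)(\mathbf{1}-\bar z\alpha^\dagger)$ collapse to $\alpha\alpha^\dagger$---but they package it differently. The paper works ``forward'': it writes $\mathbf{1}-\alpha\alpha^\dagger$ in two complementary ways, sums them to obtain the symmetric identity
\[
2(\mathbf{1}-\alpha\alpha^\dagger)=(\mathbf{2}-B)B^\dagger+B(\mathbf{2}-B^\dagger)\qquad(B:=\mathbf{1}-z\alpha),
\]
and then inverts inside $p(z)=B^\dagger(\mathbf{1}-\alpha\alpha^\dagger)^{-1}B$. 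You instead invert both sides at the outset and reduce the claim to the single-line polynomial identity $\mathbf{1}-\alpha\alpha^\dagger=B^{-1}+A^{-1}-B^{-1}A^{-1}$, which follows immediately from expanding $B^{-1}A^{-1}$. Your route avoids the detour through the two auxiliary factorizations and the summation trick; the paper's route perhaps makes the symmetry in $(\alpha,z)\leftrightarrow(\alpha^\dagger,\bar z)$ more visible along the way. Your remark on invertibility is well taken and applies equally to the paper's formulation, which tacitly assumes $\mathbf{1}-z\alpha$ and $\mathbf{1}-\alpha\alpha^\dagger$ are invertible.
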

\begin{proof}
Consider the matrix polynomial
\begin{equation}\label{poly}
p(z)=({\bf 1}-\overline{z}\alpha^{\dag})({\bf 1}-\alpha\alpha^{\dag})^{-1}({\bf 1}-z\alpha).
\end{equation}
Since $z\overline{z}=1$, we have
\begin{equation}\label{first}
{\bf 1}-\alpha\alpha^{\dag}={\bf 1}-z\overline{z}\alpha\alpha^{\dag}=({\bf 1}-z\alpha)({\bf 1}+\overline{z}\alpha^{\dag})+z\alpha-\overline{z}\alpha^{\dag}.
\end{equation}
Similarly,
\begin{equation}\label{second}
{\bf 1}-\alpha\alpha^{\dag}={\bf 1}-z\overline{z}\alpha\alpha^{\dag}=({\bf 1}+z\alpha)({\bf 1}-\overline{z}\alpha^{\dag})-z\alpha+\overline{z}\alpha^{\dag}.
\end{equation}
The sum of the expressions~\eqref{first} and~\eqref{second} yields
\begin{equation}\label{main}
\begin{array}{ccl}
2({\bf 1}-\alpha\alpha^{\dag}) &= &({\bf 1}+z\alpha)({\bf 1}-\overline{z}\alpha^{\dag})+({\bf 1}-z\alpha)({\bf 1}+\overline{z}\alpha^{\dag}) \\

&=&({\bf 2}-({\bf 1}-z\alpha))({\bf 1}-\overline{z}\alpha^{\dag})+({\bf 1}-z\alpha)({\bf 2}-({\bf 1}-\overline{z}\alpha^{\dag})).
\end{array}
\end{equation}
Let us denote ${\bf 1}-z\alpha$ by $B$ for brevity. From~\eqref{poly} and~\eqref{main} we obtain
\begin{equation*}\label{poly.2}
\begin{array}{ccl}
p(z) &= &B^{\dag}({\bf 1}-\alpha\alpha^{\dag})^{-1}B \; =\; 2B^{\dag}\left[B({\bf 2}-B^{\dag})+({\bf 2}-B)B^{\dag}\right]^{-1}B  \\

&=&2\left[({\bf 2}-B^{\dag})B^{-\dag}+B^{-1}({\bf 2}-B)\right]^{-1} \; = \; 2\left[2B^{-\dag}-{\bf 2}+2B^{-1}\right]^{-1}\\

&=&\left[({\bf 1}-\overline{z}\alpha^{\dag})^{-1}+({\bf 1}-z\alpha)^{-1}-{\bf 1}\right]^{-1}.
\end{array}
\end{equation*}

\end{proof}

\section{The Bernstein--Szeg\H{o} approximation}

In this section we obtain a formula for the Bernstein--Szeg\H{o} approximation of a matrix
 probability measure.
\begin{lemma}\label{neglemma7} Let $\beta_n$ be the matrix defined by
\begin{equation}\label{an89}
\beta_n=\exp\int_{0}^{2\pi}\log([\varphi_n^{R,*} (e^{i\theta})^\dagger \varphi_n^{R,*} (e^{i
\theta})]^{-1})\frac{d\theta}{2\pi}.
\end{equation}
Then $\log\beta_n$ is self-adjoint and nonpositive.
\end{lemma}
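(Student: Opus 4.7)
The plan is to read off both conclusions directly from material already assembled earlier in the paper. By Theorem~\ref{PropMOP}(i), $\varphi_n^{R,*}(e^{i\theta})$ is invertible for every $\theta$, so the integrand
\[
F(\theta) \;\eqbd\; \bigl[\varphi_n^{R,*}(e^{i\theta})^\dagger \varphi_n^{R,*}(e^{i\theta})\bigr]^{-1}
\]
is a pointwise positive definite, self-adjoint matrix depending continuously on $\theta$. Since $\log$ preserves the property of being self-adjoint on positive matrices (Proposition~\ref{OpCont} and functional calculus), the function $\theta \mapsto \log F(\theta)$ is self-adjoint-valued, and hence so is its integral, which is exactly $\log \beta_n$. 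That takes care of the self-adjoint part.

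For nonpositivity, the key observation is that $F(\theta)\, d\theta/(2\pi)$ is precisely the right Bernstein--Szeg\H o measure $d\mu_n$ defined in~\eqref{eqszego3}. By Theorem~\ref{BSconv}, this measure is normalized, i.e.
\[
\int_{\mathbb T} F(\theta)\,\frac{d\theta}{2\pi} \;=\; \mathbf{1}.
\]
Now I would invoke the matrix Jensen inequality for the logarithm, Proposition~\ref{prJensen}, applied to the positive-matrix-valued function $F$:
\[
\log \beta_n \;=\; \int_{\mathbb T} \log F(\theta)\,\frac{d\theta}{2\pi} \;\preceq\; \log\!\left(\int_{\mathbb T} F(\theta)\,\frac{d\theta}{2\pi}\right) \;=\; \log(\mathbf{1}) \;=\; \mathbf{0}.
\]
This yields $\log\beta_n \preceq \mathbf{0}$, i.e., it is nonpositive.

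There is no real obstacle here: the two substantive inputs, the matrix Jensen inequality and the normalization of the Bernstein--Szeg\H o approximation, have both been established in the preceding sections. The only small point to verify is the identification of the integrand as $d\mu_n/d\theta$, which is immediate from~\eqref{eqszego3}, and the self-adjointness of $\log F(\theta)$, which is immediate because the spectral calculus of a real-valued function applied to a self-adjoint operator returns a self-adjoint operator.
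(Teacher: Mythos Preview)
Your proof is correct and follows essentially the same approach as the paper: both argue self-adjointness from positivity of the integrand, then apply the matrix Jensen inequality (Proposition~\ref{prJensen}) together with the normalization $\mu_n(\mathbb T)=\mathbf{1}$ from Theorem~\ref{BSconv} to conclude $\log\beta_n\preceq\mathbf{0}$.
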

\begin{proof} Since $[\varphi_n^{R,*} (e^{i\theta})^\dagger \varphi_n^{R,*} (e^{i
\theta})]^{-1}$ is a positive matrix for every $\theta$ its logarithm is self-adjoint as well as the integral $\log\beta_n$ of the logarithm of this matrix. By Proposition \ref{prJensen}, \begin{equation*}
\log\beta_n=\int_{0}^{2\pi}\log([\varphi_n^{R,*} (e^{i\theta})^\dagger \varphi_n^{R,*} (e^{i
\theta})]^{-1})\frac{d\theta}{2\pi}\preceq\log\left(\int_{0}^{2\pi}[\varphi_n^{R,*} (e^{i\theta})^\dagger \varphi_n^{R,*} (e^{i
\theta})]^{-1})\frac{d\theta}{2\pi}\right)=\log\mu_n(\mathbb{T})=\mathbf{0}
\end{equation*}
since $\mu_n$ is normalized: $\mu_n(\mathbb T)={\bf 1}$.
\end{proof}

The standard operator calculus and Lemma \ref{neglemma7} imply that  the matrix $\beta_n=\exp \log\beta_n$ is self-adjoint and satisfies
\begin{equation}\label{bettaop9}
\mathbf{0}\prec \beta_n\preceq \mathbf{1}.
\end{equation}
\begin{lemma}\label{detid0} For $\beta_n$, we have
\begin{equation}\label{ident65}
\log\det{\beta_n}=\text{tr}\log\beta_n=\log\prod_{k=0}^{n-1}\det\left({\bf 1} -\alpha_k \alpha_k^\dagger\right)=\sum_{k=0}^{n-1}\text{tr}\log\left({\bf 1} -\alpha_k \alpha_k^\dagger\right).
\end{equation}
\end{lemma}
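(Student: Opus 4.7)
The plan is to establish the three equalities in \eqref{ident65} from left to right. The first equality $\log\det\beta_n=\tr\log\beta_n$ is an immediate application of Lemma~\ref{triv89}, since \eqref{bettaop9} guarantees $\beta_n\succ\mathbf 0$. The last equality is Corollary~\ref{triv90} applied termwise to the positive matrices $\mathbf 1-\alpha_k\alpha_k^\dagger$. Thus the real work is in the middle equality, and I will approach it by a scalar Jensen-formula argument on $P(z):=\det\varphi_n^{R,*}(z)$.

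First I take traces in the definition \eqref{an89} of $\beta_n$ and use Lemma~\ref{triv89} inside the integral:
\begin{equation*}
\tr\log\beta_n=\int_0^{2\pi}\tr\log\bigl([\varphi_n^{R,*}(e^{i\theta})^\dagger\varphi_n^{R,*}(e^{i\theta})]^{-1}\bigr)\,\tfrac{d\theta}{2\pi}=-\int_0^{2\pi}\log|P(e^{i\theta})|^2\,\tfrac{d\theta}{2\pi}.
\end{equation*}
The polynomial $P$ is a scalar polynomial in $z$ of degree at most $n\ell$. By Theorem~\ref{PropMOP}, $\varphi_n^{R,*}(z)$ is invertible for all $z$ in the closed disc $\overline{\mathbb D}$, so $P$ has no zeros in $\overline{\mathbb D}$. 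Scalar Jensen's formula then gives $\int_0^{2\pi}\log|P(e^{i\theta})|\,d\theta/(2\pi)=\log|P(0)|$, so the computation reduces to evaluating $P(0)$.

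Next I identify $P(0)=\det\varphi_n^{R,*}(0)$. Writing $\varphi_n^R(z)=\kappa_n^R z^n+(\text{lower order})$, the definition \eqref{reverseP} of the Szeg\H o dual gives $\varphi_n^{R,*}(0)=(\kappa_n^R)^\dagger$. From \eqref{kappaformula}, $\kappa_n^R=(\rho_{n-1}^R\cdots\rho_0^R)^{-1}$, and since each $\rho_k^R$ is positive definite it has a positive real determinant; hence $\det\kappa_n^R$ is a positive real number and
\begin{equation*}
|P(0)|^2=(\det\kappa_n^R)^2=\prod_{k=0}^{n-1}(\det\rho_k^R)^{-2}=\prod_{k=0}^{n-1}\det(\mathbf 1-\alpha_k\alpha_k^\dagger)^{-1},
\end{equation*}
using $(\rho_k^R)^2=\mathbf 1-\alpha_k\alpha_k^\dagger$ from \eqref{rhoalpha4}. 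Feeding this back into the Jensen identity yields $\tr\log\beta_n=-\log|P(0)|^2=\log\prod_{k=0}^{n-1}\det(\mathbf 1-\alpha_k\alpha_k^\dagger)$, which is exactly the middle equality.

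The only nontrivial point is the Jensen step, which requires knowing that $\det\varphi_n^{R,*}$ has no zeros on $\overline{\mathbb D}$; this however is handed to us by Theorem~\ref{PropMOP}(i)--(ii). Everything else is a bookkeeping exercise in stringing together the formulas already in the paper, so I do not anticipate any genuine obstacle.
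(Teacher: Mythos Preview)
Your proof is correct and follows essentially the same route as the paper's: both reduce $\tr\log\beta_n$ to the mean of $\log|\det\varphi_n^{R,*}|^2$ over $\mathbb T$, invoke the analyticity of $\log\det\varphi_n^{R,*}$ on $\overline{\mathbb D}$ (guaranteed by Theorem~\ref{PropMOP}) to replace the integral by the value at $0$, and then evaluate $\det\varphi_n^{R,*}(0)$ via \eqref{kappaformula} and \eqref{rhoalpha4}. The only cosmetic difference is that the paper writes the integrand as $2\,\mathbf{Re}\,\log\det([\varphi_n^{R,*}]^{-1})$ and applies the mean-value property directly, whereas you phrase the same step as the zero-free case of the scalar Jensen formula for $P=\det\varphi_n^{R,*}$.
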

\begin{proof} Applying elementary transformations and formula~\eqref{rhoalpha4}, we obtain
\begin{multline*}
\text{tr}(\log\beta_n)=\int_{0}^{2\pi}\text{tr}\log([\varphi_n^{R,*} (e^{i\theta})^\dagger \varphi_n^{R,*} (e^{i
\theta})]^{-1})\frac{d\theta}{2\pi}=\int_{0}^{2\pi}\log\det([\varphi_n^{R,*} (e^{i\theta})^\dagger \varphi_n^{R,*} (e^{i
\theta})]^{-1})\frac{d\theta}{2\pi}\\=\int_{0}^{2\pi}\log\det([\varphi_n^{R,*} (e^{i\theta})^\dagger]^{-1})\frac{d\theta}{2\pi}+\int_{0}^{2\pi}\log\det([\varphi_n^{R,*} (e^{i\theta})]^{-1})\frac{d\theta}{2\pi}=2\textbf{Re}\int_{0}^{2\pi}\log\det([\varphi_n^{R,*} (e^{i\theta})]^{-1})\frac{d\theta}{2\pi}\\=2\textbf{Re}\log\det([\varphi_n^{R,*} (0)]^{-1})=
2\log\det\left(\rho_{0}^R\cdots\rho_{n-1}^R\right)=\log\prod_{k=0}^{n-1}\det\left({\bf 1} -\alpha_k \alpha_k^\dagger\right)
\end{multline*}
since the function $z\mapsto\log\det\left([\varphi_n^{R,*} (z)]^{-1}\right)$ is analytic in the closed unit disc.
\end{proof}

\vskip 1mm \noindent
{\bf Remark.\/} In general, $\log (AB)$ cannot be written as $\log A + \log B$ if  
$A$ and $B$ are matrices. So, the integral in Lemma~\ref{neglemma7} cannot be evaluated 
by the mean value theorem. In other words, the function
 $\log([\varphi_n^{R,*} (e^{i\theta})^\dagger \varphi_n^{R,*} (e^{i
\theta})]^{-1})$ is in general not a restriction of a harmonic function
to the unit circle. Our next lemma addresses the easy case when the logarithm
in question does split. \vskip 2mm

If $\{\alpha_1,\ldots,\alpha_{n-1}\}$ is a commuting family of normal matrices,
then the self-adjoint matrix $\beta_n$ can be evaluated explicitly as follows:
\begin{lemma}\label{directcomp} Let $\{\alpha_1,\ldots,\alpha_{n-1}\}$ be a commuting family of normal matrices. Then
\begin{equation*}
\beta_n=\prod_{k=0}^{n-1}\left({\bf 1} -\alpha_k \alpha_k^\dagger\right).
\end{equation*}
\end{lemma}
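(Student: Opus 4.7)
The plan is to reduce the commuting normal case to $\ell$ independent scalar Verblunsky identities by simultaneous unitary diagonalization. Since $\{\alpha_0,\ldots,\alpha_{n-1}\}$ is a commuting family of normal matrices, there exists a single unitary $U\in{\mathcal M}_\ell$ such that $U^\dagger\alpha_k U=\Lambda_k=\text{diag}(\lambda_{k,1},\ldots,\lambda_{k,\ell})$ for every $k=0,\ldots,n-1$. Normality also gives $\alpha_k\alpha_k^\dagger=\alpha_k^\dagger\alpha_k$, so by \eqref{rhoalpha4} we have $\rho_k^L=\rho_k^R$, and these matrices (together with their inverses) lie in the unital commutative $*$-algebra $\mathcal{D}$ generated by the $\alpha_k$, namely the algebra of all matrices diagonalized by $U$.

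Next I verify by induction on $n$ using the Szeg\H{o} recursion (Theorem~\ref{szegoth}) that $\varphi_n^{R}(z)$ and $\varphi_n^{R,*}(z)$ lie in $\mathcal{D}$ for every $z$. Indeed, $\varphi_0^R=\varphi_0^{R,*}=\mathbf{1}\in\mathcal{D}$, and~\eqref{S2} together with the dual relation for $\varphi_{n+1}^{R,*}$ express the next polynomials as rational combinations with coefficients $\alpha_n$, $\alpha_n^\dagger$, $(\rho_n^R)^{-1}$, all in $\mathcal{D}$. Consequently $\varphi_n^{R,*}(z)^\dagger\varphi_n^{R,*}(z)\in\mathcal{D}$, and when we conjugate by $U$ we obtain a diagonal matrix whose $j$th diagonal entry equals $|\tilde{\varphi}_n^{*,(j)}(z)|^2$, where $\tilde{\varphi}_n^{*,(j)}$ denotes the scalar reversed orthogonal polynomial associated with the Verblunsky coefficients $\lambda_{0,j},\ldots,\lambda_{n-1,j}\in\mathbb{D}$.

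Now the operator logarithm, the integration against $d\theta/(2\pi)$, and the operator exponential all preserve $\mathcal{D}$ and act entrywise on the diagonal in the $U$-basis. Therefore
\begin{equation*}
U^\dagger\beta_n U = \text{diag}\left(\exp\int_0^{2\pi}\log|\tilde{\varphi}_n^{*,(j)}(e^{i\theta})|^{-2}\,\frac{d\theta}{2\pi}\right)_{j=1}^\ell.
\end{equation*}
For each $j$ the function $z\mapsto\log|\tilde{\varphi}_n^{*,(j)}(z)|$ is harmonic in a neighborhood of the closed unit disc, since $\tilde{\varphi}_n^{*,(j)}$ is a scalar polynomial with no zeros in $\overline{\mathbb{D}}$ (part~(i)--(ii) of Theorem~\ref{PropMOP} in the scalar case). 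By the mean value property together with the scalar identity $|\tilde{\varphi}_n^{*,(j)}(0)|^{-2}=\prod_{k=0}^{n-1}(1-|\lambda_{k,j}|^2)$, the $j$th diagonal entry simplifies to $\prod_{k=0}^{n-1}(1-|\lambda_{k,j}|^2)$.

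Re-assembling,
\begin{equation*}
\beta_n = U\,\text{diag}\!\left(\prod_{k=0}^{n-1}(1-|\lambda_{k,j}|^2)\right)_{j=1}^\ell U^\dagger = \prod_{k=0}^{n-1}(\mathbf{1}-\alpha_k\alpha_k^\dagger),
\end{equation*}
as desired. The only conceptually delicate step is the stability of the commutative algebra under the Szeg\H{o} recursion, which is precisely where normality of the parameters is used (via $\rho_k^L=\rho_k^R\in\mathcal{D}$); everything after that reduction is a direct application of the classical scalar Szeg\H{o}-Verblunsky identity.
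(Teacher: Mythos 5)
Your proof is correct, but it takes a different route from the one the paper actually writes down. The paper's proof of this lemma is a one-liner: it says to rerun the argument of Lemma~\ref{detid0} (the trace/determinant computation via analyticity of $z\mapsto\log\det([\varphi_n^{R,*}(z)]^{-1})$ and the mean value property) at the level of matrices rather than traces, using the observation that in the commuting normal case $\varphi_n^{R,*}(z)$ is normal for every $z$, so that the matrix logarithm of $[\varphi_n^{R,*}(e^{i\theta})^\dagger\varphi_n^{R,*}(e^{i\theta})]^{-1}$ splits into a sum of two logarithms and the integral $\log\beta_n$ can be evaluated by the mean value theorem, giving $\beta_n=(\rho_0^R\cdots\rho_{n-1}^R)(\rho_{n-1}^R\cdots\rho_0^R)=\prod_{k=0}^{n-1}(\mathbf{1}-\alpha_k\alpha_k^\dagger)$. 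You instead implement the alternative the authors only mention in the Remark following the main theorem: simultaneous unitary diagonalization of the commuting normal family, reduction to $\ell$ decoupled scalar problems, and the classical scalar Szeg\H{o}--Verblunsky identity on each diagonal entry. Both arguments hinge on the same key structural fact --- that the commutative $*$-algebra $\mathcal{D}$ generated by the $\alpha_k$ is stable under the Szeg\H{o} recursion --- which you rightly flag as the delicate step; your version has the advantage of being fully explicit and of importing the scalar theory wholesale, whereas the paper's version stays at the matrix level and avoids diagonalization. One small point of care: the Szeg\H{o} recursion \eqref{S1}--\eqref{S2} couples the left and right families, so your induction should formally carry all four sequences $\varphi_n^L,\varphi_n^R,\varphi_n^{L,*},\varphi_n^{R,*}$ (in the commuting normal case they collapse, with $\varphi_n^L=\varphi_n^R$ and all values in $\mathcal{D}$); this is easily supplied and does not affect the validity of the argument.
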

\begin{proof} The proof follows the proof of Lemma \ref{detid0}
 since in this case $\varphi_n^{R,*} (z)$ is a normal matrix for any value of $z$.
\end{proof}

\section{The Matrix Szeg\"o Theorem}

\begin{definition}
A matrix probability measure $\sigma\in\textsf{P}_{\ell}(\mathbb{T})$ is said to be a Szeg\"o measure if
\begin{equation}\label{defSz}
\int_{\mathbb{T}}\tr\log \sigma^{\prime}\,\frac{d\theta}{2\pi}>-\infty.
\end{equation}
\end{definition}
\begin{theorem}\label{firstineq}For any matrix probability measure $\sigma\in\textsf{P}_{\ell}(\mathbb{T})$ and any $n\in {\mathbb N}$,
\begin{equation}\label{HalfEq}
\int_{\mathbb{T}}\tr\log \sigma^{\prime}\,\frac{d\theta}{2\pi}\leq
\tr\log\beta_n=\log\prod_{k=0}^{n-1}\det(1-\alpha_k^\dagger \alpha_k).
\end{equation}
\end{theorem}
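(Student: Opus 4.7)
The right-hand equality in (\ref{HalfEq}) is essentially Lemma \ref{detid0}, together with the standard Sylvester identity $\det({\bf 1}-\alpha_k\alpha_k^\dagger) = \det({\bf 1}-\alpha_k^\dagger\alpha_k)$. So the real content of the theorem is the inequality
\[
\int_{\mathbb{T}}\tr\log\sigma'\,\frac{d\theta}{2\pi} \;\leq\; \log\prod_{k=0}^{n-1}\det({\bf 1}-\alpha_k\alpha_k^\dagger).
\]
My plan is to combine the matrix Jensen inequality (Proposition \ref{prJensen}) with the orthonormality of the left polynomials $\varphi_n^L$, and then evaluate a boundary integral using harmonicity of $\log|\det\varphi_n^{R,*}|$ on $\overline{\mathbb{D}}$.

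\textbf{Step 1 (Jensen plus orthonormality).} Set $F(\theta):=\varphi_n^L(e^{i\theta})\,\sigma'(\theta)\,\varphi_n^L(e^{i\theta})^\dagger$. Since $\sigma'\,d\theta/(2\pi)\preceq d\sigma$, the orthonormality (\ref{ortcond}) of $\varphi_n^L$ and Lemma \ref{triv87} give $\int F\,d\theta/(2\pi)\preceq\int \varphi_n^L\,d\sigma\,\varphi_n^{L\dagger}={\bf 1}$. Apply Proposition \ref{prJensen} to $F$, take the trace (converting $\tr\log$ to $\log\det$ by Lemma \ref{triv89}), and invoke monotonicity of $\det$ on positive matrices:
\[
\int \log\det F(\theta)\,\frac{d\theta}{2\pi} \;\leq\; \log\det\int F(\theta)\,\frac{d\theta}{2\pi} \;\leq\; \log\det{\bf 1}=0.
\]
Expanding $\det F(\theta) = |\det\varphi_n^L(e^{i\theta})|^2\det\sigma'(\theta)$ rearranges this to
\[
\int\tr\log\sigma'\,\frac{d\theta}{2\pi} \;\leq\; -\int\log|\det\varphi_n^L(e^{i\theta})|^2\,\frac{d\theta}{2\pi}.
\]

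\textbf{Step 2 (boundary integral via harmonicity).} Identity (\ref{LeftRight}) gives $|\det\varphi_n^L|^2=|\det\varphi_n^R|^2$ on $\mathbb{T}$, and the definition (\ref{reverseP}) of the Szeg\H{o} dual gives $|\det\varphi_n^R|=|\det\varphi_n^{R,*}|$ on $\mathbb{T}$. By Theorem \ref{PropMOP}(i)--(ii), $\det\varphi_n^{R,*}(z)$ is a scalar polynomial with no zeros in $\overline{\mathbb{D}}$, so $\log\det\varphi_n^{R,*}$ admits a holomorphic branch on $\overline{\mathbb{D}}$, and the mean-value property yields
\[
\int\log|\det\varphi_n^{R,*}(e^{i\theta})|^2\,\frac{d\theta}{2\pi}
=2\log\det\varphi_n^{R,*}(0)=2\log\det\kappa_n^R
=-\log\prod_{k=0}^{n-1}\det({\bf 1}-\alpha_k\alpha_k^\dagger),
\]
where the last equality uses (\ref{kappaformula}) and (\ref{rhoalpha4}). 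Combining with Step~1 yields the theorem.

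\textbf{Main obstacle.} The strategic choice is which positive matrix-valued function to feed into Jensen. The sandwich $F=\varphi_n^L\sigma'\varphi_n^{L\dagger}$ is natural because its integral is bounded above by ${\bf 1}$ through orthonormality, but $\det\varphi_n^L$ can vanish inside $\mathbb{D}$, blocking a direct mean-value computation; one must pass to the Szeg\H{o} dual $\varphi_n^{R,*}$, which is zero-free on $\overline{\mathbb{D}}$ and whose boundary modulus agrees with $|\det\varphi_n^L|$. A minor nuisance is the case $\det\sigma'=0$ on a set of positive measure, which makes the left side $-\infty$ and the inequality trivial; otherwise the argument above proceeds unchanged.
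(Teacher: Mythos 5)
Your proof is correct and follows essentially the same route as the paper's: the matrix Jensen inequality of Proposition~\ref{prJensen} played against the orthonormality relation $\int\varphi_n^L\,d\sigma\,(\varphi_n^L)^\dagger=\mathbf{1}$, followed by the determinant factorization $\log\det\bigl(\varphi_n^L\sigma'(\varphi_n^L)^\dagger\bigr)=\log|\det\varphi_n^L|^2+\tr\log\sigma'$ and the mean-value evaluation of the zero-free $\log|\det\varphi_n^{R,*}|$ at the origin, which is exactly the computation packaged in Lemma~\ref{detid0}. The only cosmetic differences are that you work with $\varphi_n^L$ rather than $\varphi_n^{R,*}$ (equivalent on $\mathbb{T}$ by \eqref{importanteq}) and omit the conjugation by $\beta_n^{1/2}$, which is harmless here since that factor contributes $\tr\log\beta_n$ to both sides of the traced inequality and cancels.
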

\begin{proof}If
$\int_{\mathbb{T}}\tr\log \sigma^{\prime}\,\frac{d\theta}{2\pi}=-\infty$,
the conclusion of the theorem holds trivially, so assume that $\sigma$
is a Szeg\H{o} measure, i.e., the corresponding integral is not $-\infty$.
Jensen's matrix inequality
from Proposition \ref{prJensen} implies
\begin{equation}\label{GPOP519}
\int_{\mathbb{T}}\log
\left(\beta_n^{1/2}[\varphi_n^{R,*} (e^{i\theta})^\dagger \sigma^{\prime}\varphi_n^{R,*} (e^{i
\theta})\beta_n^{1/2}]\right)
\frac{d\theta}{2\pi}
\preceq \log\left(\int_{\mathbb{T}}\beta_n^{1/2}
[\varphi_n^{R,*} (e^{i\theta})^\dagger \sigma^{\prime}\frac{d\theta}{2\pi}\varphi_n^{R,*} (e^{i
\theta})]\beta_n^{1/2}\right).
\end{equation}
Now we replace $\sigma^{\prime}$ by $\sigma$, which is larger in the Loewner ordering, according to \eqref{LebesgueDecomp}. Since $\sigma$ is absolutely continuous with respect of $\tr(\sigma)$, there exist two disjoint Borel sets $E$ and $F$ and a Borel matrix function $x\mapsto \texttt{M}(x)$ such that
\begin{equation*}
d\sigma =\texttt{M}\chi_E\tr(\sigma_a)+\texttt{M}\chi_F\tr(d\sigma_d+d\sigma_s)\,
\end{equation*}
where $E$ is a Borel support of $\tr(d\sigma_a)$, $F$ is a Borel support of $\tr(d\sigma_d+d\sigma_s)$ and $\chi_E$, $\chi_F$ are the indicators of $E$ and $F$ correspondingly. Notice that
\begin{equation}\label{eqer57}
\begin{aligned}
\beta_n^{1/2}
[\varphi_n^{R,*} (e^{i\theta})^\dagger \texttt{M}\chi_E\tr(\sigma_a)\varphi_n^{R,*} (e^{i
\theta})]\beta_n^{1/2}&=\beta_n^{1/2}
[\varphi_n^{R,*} (e^{i\theta})^\dagger \sigma^{\prime}\frac{d\theta}{2\pi}\varphi_n^{R,*} (e^{i
\theta})]\beta_n^{1/2};\\
\beta_n^{1/2}[\varphi_n^{R,*} (e^{i\theta})^\dagger \texttt{M}\chi_F\tr(d\sigma_d+d\sigma_s)\varphi_n^{R,*} (e^{i
\theta})]\beta_n^{1/2}&=\beta_n^{1/2}
[\varphi_n^{R,*} (e^{i\theta})^\dagger (d\sigma_d+d\sigma_s)\varphi_n^{R,*} (e^{i
\theta})]\beta_n^{1/2}.
\end{aligned}
\end{equation}
Combining \eqref{eqer57} with the result of Lemma \ref{triv87}, we obtain
\begin{multline*}
\int_{\mathbb{T}}\beta_n^{1/2}
[\varphi_n^{R,*} (e^{i\theta})^\dagger \sigma^{\prime}\frac{d\theta}{2\pi}\varphi_n^{R,*} (e^{i
\theta})]\beta_n^{1/2}=\int_{\mathbb{T}}\beta_n^{1/2}
[\varphi_n^{R,*} (e^{i\theta})^\dagger \texttt{M}\chi_E\tr(\sigma_a)\varphi_n^{R,*} (e^{i
\theta})]\beta_n^{1/2}  \\
\preceq \int_{\mathbb{T}}\beta_n^{1/2}
[\varphi_n^{R,*} (e^{i\theta})^\dagger \texttt{M}\chi_E\tr(\sigma_a)\varphi_n^{R,*} (e^{i
\theta})]\beta_n^{1/2}+\int_{\mathbb{T}}[\varphi_n^{R,*} (e^{i\theta})^\dagger \texttt{M}\chi_F\tr(d\sigma_d+d\sigma_s)\varphi_n^{R,*} (e^{i
\theta})]\beta_n^{1/2}\\=
\int_{\mathbb{T}}\beta_n^{1/2}
[\varphi_n^{R,*} (e^{i\theta})^\dagger d\sigma\varphi_n^{R,*} (e^{i
\theta})]\beta_n^{1/2}.
\end{multline*}
By \eqref{GPOP519} and by the operator monotonicity of the logarithm from Lemma \ref{OpMon}, we obtain
\begin{multline}\label{importantineq}
\int_{\mathbb{T}}\log
\left(\beta_n^{1/2}[\varphi_n^{R,*} (e^{i\theta})^\dagger \sigma^{\prime}\varphi_n^{R,*} (e^{i
\theta})]\beta_n^{1/2}\right)
\frac{d\theta}{2\pi}
\preceq\log\left(\int_{\mathbb{T}}\beta_n^{1/2}
[\varphi_n^{R,*} (e^{i\theta})^\dagger{d\sigma} \varphi_n^{R,*} (e^{i
\theta})]\beta_n^{1/2}\right)\\=\log\left(\beta_n^{1/2}\int_{\mathbb{T}}[\varphi_n^{R,*} (e^{i\theta})^\dagger{d\sigma} \varphi_n^{R,*} (e^{i
\theta})]\beta_n^{1/2}\right)=\log\left(\beta_n^{1/2}{\bf 1}\beta_n^{1/2}\right)=\log\beta_n\,,
\end{multline}
in view of the orthonormality  of the polynomials $\varphi^R_n$.
Next, we have
\begin{multline*}
\text{tr}\log\left(\beta_n^{1/2}[\varphi_n^{R,*} (e^{i\theta})^\dagger \sigma^{\prime} \varphi_n^{R,*} (e^{i
\theta})]\beta_n^{1/2}\right)=\log\det\left(\beta_n^{1/2}[\varphi_n^{R,*} (e^{i\theta})^\dagger \sigma^{\prime} \varphi_n^{R,*} (e^{i
\theta})]\beta_n^{1/2}\right)\\=
\log\left[\det(\beta_n)\det([\varphi_n^{R,*} (e^{i\theta})^\dagger  \varphi_n^{R,*} (e^{i
\theta})])\det(\sigma^{\prime})\right]=\log\det(\beta_n)+\log\det([\varphi_n^{R,*} (e^{i\theta})^\dagger  \varphi_n^{R,*} (e^{i
\theta})])+\log\det(\sigma^{\prime})\\=
\text{tr}\log\beta_n+\text{tr}\log[\varphi_n^{R,*} (e^{i\theta})^\dagger  \varphi_n^{R,*} (e^{i
\theta})]+\text{tr}\log\sigma^{\prime}.
\end{multline*}
Integrating the above equality and taking into account
 \eqref{an89}, \eqref{ident65} and \eqref{importantineq}, we arrive at
\begin{multline*}
\text{tr}\log\beta_n\geq \int_{\mathbb{T}}\text{tr}\log
\left(\beta_n^{1/2}[\varphi_n^{R,*} (e^{i\theta})^\dagger \sigma^{\prime}\varphi_n^{R,*} (e^{i
\theta})]\beta_n^{1/2}\right)
\frac{d\theta}{2\pi}\\=
\text{tr}\log\beta_n+\int_{\mathbb{T}}\text{tr}\log
\left([\varphi_n^{R,*} (e^{i\theta})^\dagger \varphi_n^{R,*} (e^{i
\theta})]\right)\frac{d\theta}{2\pi}+\int_{\mathbb{T}}\text{tr}\log\sigma^{\prime}\frac{d\theta}{2\pi}=
\int_{\mathbb{T}}\text{tr}\log\sigma^{\prime}\frac{d\theta}{2\pi}\,.
\end{multline*}
It remains to apply Lemma~\ref{detid0}.
\end{proof}
\begin{corollary}\label{leftineqb} If $\sigma$ is a Szeg\H{o} measure, then
\begin{equation}\label{firstin56}
\int_{\mathbb{T}}\tr\log\sigma^{\prime}\frac{d\theta}{2\pi}\leq\inf_n\text{tr}\log\beta_n\leq-\sup_n\log\Vert\beta_n^{-1}\Vert \leq 0,
\end{equation}
in particular, $\sup_n\Vert\beta_n^{-1}\Vert<+\infty$.
\end{corollary}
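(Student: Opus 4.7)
The plan is to read off the chain of inequalities one link at a time, using only the spectral structure of $\beta_n$ established earlier in the paper, and then deduce the boundedness statement from finiteness of the Szeg\H{o} integral.

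The first inequality is essentially free: Theorem~\ref{firstineq} asserts $\int_{\mathbb{T}}\tr\log\sigma^{\prime}\,d\theta/(2\pi)\leq\tr\log\beta_n$ for every $n$, so taking the infimum over $n$ on the right produces precisely $\int_{\mathbb{T}}\tr\log\sigma^{\prime}\,d\theta/(2\pi)\leq\inf_n\tr\log\beta_n$. No extra argument is needed here.

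The central step is the middle inequality $\inf_n\tr\log\beta_n\leq -\sup_n\log\Vert\beta_n^{-1}\Vert$, and this I expect to be the only step that requires real work (though still short). By Lemma~\ref{neglemma7} together with~\eqref{bettaop9}, each $\beta_n$ is self-adjoint with spectrum contained in $(0,1]$; let $\lambda_1^{(n)},\dots,\lambda_\ell^{(n)}$ denote its eigenvalues. Then $\tr\log\beta_n=\sum_j\log\lambda_j^{(n)}$, while $\Vert\beta_n^{-1}\Vert$ is the spectral radius of $\beta_n^{-1}$, which equals $1/\min_j\lambda_j^{(n)}$, so $-\log\Vert\beta_n^{-1}\Vert=\min_j\log\lambda_j^{(n)}$. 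Because each $\log\lambda_j^{(n)}\leq 0$, the full sum is bounded above by any single term, giving the pointwise bound $\tr\log\beta_n\leq -\log\Vert\beta_n^{-1}\Vert$ for every $n$. Since $\inf_m\tr\log\beta_m$ is then a lower bound for the family $\{-\log\Vert\beta_n^{-1}\Vert\}_n$, passing to the infimum over $n$ on the right yields $\inf_m\tr\log\beta_m\leq\inf_n(-\log\Vert\beta_n^{-1}\Vert)=-\sup_n\log\Vert\beta_n^{-1}\Vert$.

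The third inequality is immediate from $\mathbf{0}\prec\beta_n\preceq\mathbf{1}$ in~\eqref{bettaop9}: this forces $\beta_n^{-1}\succeq\mathbf{1}$, hence $\Vert\beta_n^{-1}\Vert\geq 1$, so each $\log\Vert\beta_n^{-1}\Vert\geq 0$, and therefore $-\sup_n\log\Vert\beta_n^{-1}\Vert\leq 0$. Finally, the ``in particular'' clause is a direct consequence of the chain: when $\sigma$ is a Szeg\H{o} measure, the leftmost quantity $\int_{\mathbb{T}}\tr\log\sigma^{\prime}\,d\theta/(2\pi)$ is finite, and by what we have just shown it bounds $-\sup_n\log\Vert\beta_n^{-1}\Vert$ from below; equivalently, $\sup_n\log\Vert\beta_n^{-1}\Vert$ is bounded above by a finite constant, which gives $\sup_n\Vert\beta_n^{-1}\Vert<+\infty$.
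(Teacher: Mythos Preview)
Your proof is correct and follows essentially the same approach as the paper: both arguments exploit that the eigenvalues of $\beta_n$ lie in $(0,1]$, compare the sum $\sum_j\log\lambda_j^{(n)}$ to the single term $\min_j\log\lambda_j^{(n)}=-\log\Vert\beta_n^{-1}\Vert$, and invoke Theorem~\ref{firstineq} for the leftmost inequality. Your presentation is slightly more explicit in separating and justifying each link of the chain, but there is no substantive difference in method.
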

\begin{proof} Since $\beta_n$ satisfies \eqref{bettaop9}, all its  eigenvalues $\lambda_k$, $1\leq k\leq \ell$, must lie in the interval $(0,1]$. In addition,
\begin{equation*}
\Vert\beta_n^{-1}\Vert=\max_{1\leq k\leq \ell}\lambda_k^{-1}.
\end{equation*}
By \eqref{HalfEq} and Lemma~\ref{triv89}
\begin{equation*}
-\infty<\int_{\mathbb{T}}\tr\log\sigma^{\prime}\frac{d\theta}{2\pi}\leq\log\det\left(\beta_n\right)=\tr\log\beta_n\leq 0,
\end{equation*}
 implying that
\begin{equation*}
\log\Vert\beta_n^{-1}\Vert=\max_k\log\lambda_k^{-1}<\sum_{k=1}^{\ell}\log\lambda_k^{-1}=\tr\log\beta_n^{-1}\leq-\int_{\mathbb{T}}\tr\log\sigma^{\prime}\frac{d\theta}{2\pi}<+\infty.
\end{equation*}
\end{proof}

By compactness of closed balls in the finite-dimensional space ${\mathcal M}_\ell$, a
bounded  sequence of matrices has a limit point. It follows that if $\{\beta_n^{-1}\}_{n\geq 0}$ is uniformly bounded, then any of its limit points $\beta^{-1}$ in ${\mathcal M}_\ell$ satisfies
\begin{equation}\label{boundbeta}
\Vert\beta^{-1}\Vert\leq \sup_n\Vert\beta_n^{-1}\Vert<+\infty.
\end{equation}
We denote by $C(\mathbb{T}, {\mathcal M}_\ell^{+})$ the set of all continuous matrix functions on $\mathbb{T}$ with values in ${\mathcal M}_\ell^{+}$,
by $L^2(\mathbb{T}, {\mathcal M}_\ell^{+})$ the set of all square-integrable matrix functions
on $\mathbb{T}$ with values in ${\mathcal M}_\ell^{+}$, and by $M(\mathbb{T}, {\mathcal M}_\ell^{+})$ the set of all finite Borel measures with values in ${\mathcal M}_\ell^{+}$.

 Let $\mu$ be a finite Borel measure with values in ${\mathcal M}_\ell^{+}$.
Suppose that, for any open arc $I\subset\mathbb{T}$ whose endpoints do not carry point masses of $\sigma$, the inequality
\begin{equation*}
\mu(I)\preceq\sigma(I)
\end{equation*}
holds. Then we write $d\mu\preceq d\sigma$.
\begin{theorem}\label{main295} Let $\sigma\in\textsf{P}_{\ell}(\mathbb{T})$ satisfy $\sup_n\Vert\beta_n^{-1}\Vert<+\infty$, with $\beta_n$ defined as above, and let $\{f_n\}_{n\geq 0}$ be a sequence in $C(\mathbb{T}, {\mathcal M}_\ell^{+})$ such that $f_n(e^{i\theta})\succ \mathbf{0}$ on $\mathbb{T}$ and let
\begin{align}
\int_{\mathbb{T}}f_n&\frac{d\theta}{2\pi}\preceq \mathbf{1};\label{formula1}\\
*{-}\lim_nf_n&\frac{d\theta}{2\pi}\preceq d\sigma;\label{formula2}\\
\log\beta_n\preceq \int_{\mathbb{T}}&\log f_n\frac{d\theta}{2\pi}.\label{formula3}
\end{align}
Then
\begin{equation}\label{matrixeq}
\lim_n\log\beta_n=\int_{\mathbb{T}}\log\sigma^{\,\prime}\frac{d\theta}{2\pi}
\end{equation}
in ${\mathcal M}_\ell$.
\end{theorem}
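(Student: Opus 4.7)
The plan is to identify limit points of the bounded sequence $\{\log\beta_n\}$, sandwich each such limit between a trace lower bound coming from Theorem~\ref{firstineq} and a trace upper bound produced by hypothesis~(3) combined with an upper-semicontinuity argument for the Szeg\H{o} functional, and then upgrade the resulting trace equality to the matrix identity~\eqref{matrixeq} via Lemma~\ref{triv88}.

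By Corollary~\ref{leftineqb} the self-adjoint sequence $\log\beta_n$ is bounded in $\mathcal{M}_\ell$, so by compactness I pass to a subsequence $(n_k)$ along which $\log\beta_{n_k}\to L$. Along a further subsequence I simultaneously arrange $\int\log f_{n_k}\,d\theta/(2\pi)\to M$ (the sequence is bounded below by $\log\beta_n$ thanks to~(3), and above by $\mathbf{0}$ thanks to~(1) and matrix Jensen from Proposition~\ref{prJensen}) and $f_{n_k}\,d\theta/(2\pi)\to d\nu$ in the $*$-weak topology with $\nu\preceq\sigma$ by~(2). Passing~(3) to the limit gives $L\preceq M$, while Theorem~\ref{firstineq} yields $\tr L\ge\int\tr\log\sigma'\,d\theta/(2\pi)$.

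The matching trace upper bound on $M$ comes from a test-polynomial argument. For any matrix polynomial $P$ with $P(0)=\mathbf{1}$ and $\det P$ zero-free in the closed unit disc, concavity of $\log\det$ on the positive cone (Proposition~\ref{prJensen} combined with Lemma~\ref{triv89}) gives
\begin{equation*}
\int\log\det(P^\dagger f_nP)\,\frac{d\theta}{2\pi}\le\log\det\int P^\dagger f_nP\,\frac{d\theta}{2\pi}.
\end{equation*}
Expanding $\log\det(P^\dagger f_nP)=\log\det f_n+2\Re\log\det P$ and using the mean-value identity $\int\log\det P\,d\theta/(2\pi)=\log\det P(0)=0$ (valid since $\log\det P$ is analytic in the disc), the left side collapses to $\int\tr\log f_n\,d\theta/(2\pi)$. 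Passing to the limit along $n_k$ (since $P^\dagger f_nP$ is a continuous test for $*$-weak convergence) and then taking the infimum over such $P$ identifies the right side, via the matrix Helson--Lowdenslager distance formula (of which Lemma~\ref{HL-Lemma} is the finite-dimensional workhorse), with $\int\tr\log\sigma'\,d\theta/(2\pi)$, using $\nu\preceq\sigma$ and monotonicity of $\det$ on $\mathcal{M}_\ell^+$. Chaining this with $L\preceq M$ and $\tr L\ge\int\tr\log\sigma'\,d\theta/(2\pi)$ forces $\tr L=\tr M=\int\tr\log\sigma'\,d\theta/(2\pi)$, and Lemma~\ref{triv88} then gives $L=M$.

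The \emph{main obstacle} is upgrading this trace equality to the full matrix identity $L=\int\log\sigma'\,d\theta/(2\pi)$. The clean scalar splitting $\log\det(P^\dagger f_nP)=\log\det f_n+2\Re\log\det P$ has no operator-valued analogue because $\log$ does not separate through a matrix product, so the Jensen argument above produces only a trace bound. To circumvent this I would exploit the Loewner inequality~\eqref{importantineq} from the proof of Theorem~\ref{firstineq}, pass to the limit inside the logarithm using the operator continuity of Proposition~\ref{OpCont} together with the compactness established in the first step, and thereby extract a Loewner relation between $L$ and $\int\log\sigma'\,d\theta/(2\pi)$. Combined with the trace equality just established and Lemma~\ref{triv88}, this forces the matrix identity, and since every subsequence yields the same limit, the full sequence $\log\beta_n$ converges to $\int\log\sigma'\,d\theta/(2\pi)$ in $\mathcal{M}_\ell$.
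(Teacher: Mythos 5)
There is a genuine gap, and it sits exactly where you flag your ``main obstacle'': the upgrade from trace equality to the matrix identity is not actually carried out, and the route you sketch for it does not work. The inequality~\eqref{importantineq} bounds $\log\beta_n$ from below by $\int_{\mathbb{T}}\log\bigl(\beta_n^{1/2}\,\varphi_n^{R,*\,\dagger}\sigma'\varphi_n^{R,*}\,\beta_n^{1/2}\bigr)\frac{d\theta}{2\pi}$, and the only way to extract $\int\log\sigma'$ from that integrand is to split the logarithm of the triple product into a sum of logarithms --- which fails for non-commuting matrices, as the paper itself stresses in the Remark after Lemma~\ref{detid0} (this is precisely why the trace computation in Theorem~\ref{firstineq} goes through $\log\det$ and does not survive at the operator level). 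Operator continuity of $\log$ and compactness do not repair this: no one-sided Loewner comparison between $L$ and $\int\log\sigma'\frac{d\theta}{2\pi}$ comes out of~\eqref{importantineq}. The paper's actual mechanism is entirely different and is the real content of the proof: it splits $\log f_n=\log^+f_n-\log^-f_n$, uses $(\log^+x)^2\le x$ together with hypothesis~\eqref{formula1} and Theorem~\ref{Ltwoth} to show that every $*$-weak limit of the positive parts is absolutely continuous (so only the negative part can carry a singular measure $\nu_s^-$), applies the matrix Jensen inequality \emph{locally on arcs} $\frac{1}{|I|}\int_I\log f_n\preceq\log\bigl(\frac{1}{|I|}\int_I f_n\bigr)$, passes to the limit with Helly's Theorem~\ref{Helley1} and hypothesis~\eqref{formula2}, and invokes Lebesgue differentiation plus Proposition~\ref{OpCont} to get the pointwise Loewner inequality $\nu'\preceq\log\sigma'$ a.e. That yields $\log\beta+\nu_s^-(\mathbb{T})\preceq\int\log\sigma'\frac{d\theta}{2\pi}$, and only then does the trace lower bound of Theorem~\ref{firstineq} force $\nu_s^-=0$ and, via Lemma~\ref{triv88}, the matrix identity. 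None of this local machinery appears in your proposal, and it cannot be replaced by the global inequality~\eqref{importantineq}.

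A secondary problem: your trace upper bound on $M$ invokes the Helson--Lowdenslager distance formula to identify $\inf_P\log\det\int P^\dagger\,d\sigma\,P$ with $\int\tr\log\sigma'\frac{d\theta}{2\pi}$. In this paper that formula (Theorem~\ref{HL}) is \emph{derived from} the theorem you are proving, so the argument is circular within the paper's architecture; you would have to import the determinantal Szeg\H{o} theorem from elsewhere (e.g.\ the classical scalar theory applied to $\det$, or \cite{DGK:78}) and justify the passage to the limit and the infimum over $P$ in the presence of the singular part of $\sigma$. The paper avoids this entirely: once the Loewner inequality $\log\beta\preceq\int\log\sigma'\frac{d\theta}{2\pi}$ is in hand, the matching trace inequality is exactly Theorem~\ref{firstineq}, with no need for a variational characterization.
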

\begin{proof}By \eqref{bettaop9} and \eqref{boundbeta}, the sequence of negative operators $\log\beta_n$ is uniformly bounded. Suppose that $\log\beta$ is a limit point of this sequence of matrices in ${\mathcal M}_\ell$. Then there is an infinite subset $\Lambda$ of $\mathbb N$ such that
\begin{equation}\label{limitbetan}
\lim_{n\in\Lambda}\log\beta_n=\log\beta.
\end{equation}

Let
\[
\log^{+} x=\max(\log x, 0)\;,\; \log^{-} x= \log^{+} x-\log x\;.
\]
Then $\log^{+} x\leq x$ for every $x>0$. The Spectral Theorem applied to a (strictly)
 positive operator $A$ yields
\begin{equation}\label{logAa2}
\log^{+}(A)\preceq A .
\end{equation}
We apply \eqref{logAa2} to $A:=f_n(e^{i\theta})$ pointwise in $\theta$ and
obtain the operator inequality
\begin{equation}\label{oppointinlog1}
\log^{+}(f_n(e^{i\theta}))\preceq f_n(e^{i\theta}).
\end{equation}
Integrating \eqref{oppointinlog1} and taking into account \eqref{formula1}, we obtain
\begin{equation}\label{IneqJen3.61}
\int_{\mathbb{T}}\log^{+}(f_n(e^{i\theta}))\frac{d\theta}{2\pi}\preceq \mathbf{1}.
\end{equation}
Observing that $\log=\log^{+}-\log^{-}$ and using \eqref{IneqJen3.61} and \eqref{formula3},
we see that
\begin{equation}\label{eqRT3.71}
\int_{\mathbb{T}}\log^{-}(f_n(e^{i\theta}))\frac{d\theta}{2\pi}\preceq \mathbf{1}+\log\beta_n^{-1}.
\end{equation}
Let
\begin{equation}
d\nu_n^{+}\eqbd \log^{+}(f_n(e^{i\theta}))\frac{d\theta}{2\pi},\qquad d\nu_n^{-}\eqbd \log^{-}(f_n(e^{i\theta}))\frac{d\theta}{2\pi}.
\end{equation}
Since $\{\beta_n^{-1}\}_{n\geq 0}$ is bounded, (\ref{eqRT3.71})
implies that $\{\nu_n^{-}\}_{n\geq 0}$ has a $*$-weak limit
point $\nu^-\in M(\mathbb{T}, {\mathcal M}_\ell^{+})$:
\begin{equation}\label{limmuminus5221}
d\nu^-=*{-}\lim_{n\in\Lambda^{'}} d\nu_n^-=(\nu^-)'\frac{d\theta}{2\pi}+d\nu^-_s\quad
\text{\rm for some } \; \Lambda^{'}\subset\Lambda,
\end{equation}
where $d\nu^-_s$ is the singular part of $d\nu^-$ (it may include the discrete part as well),
$(\nu^-)'=d\nu^-/(\frac{d\theta}{2\pi})$.
It follows from the inequality $(\log^{+} x)^2\leq x$ and \eqref{formula1} that
\begin{equation}\label{IneqJen3.6h1}
\int_{\mathbb{T}}\left(\log^{+}(f_n(e^{i\theta})\right)^2\,\frac{d\theta}{2\pi}
\preceq{\bf 1}\;,
\end{equation}
By (\ref{IneqJen3.6h1}), the function $d\nu_n^+/(\frac{d\theta}{2\pi})$ is in the unit ball of
$L^2(\mathbb{T}, {\mathcal M}_\ell)$, which is compact in the weak topology of
$L^2(\mathbb{T}, {\mathcal M}_\ell)$, see Theorem \ref{Ltwoth}. It follows that 
any $*$-limit point $\omega$ of
$\{\nu_n^+\}_{n\geq 0}$ in $M(\mathbb{T}, {\mathcal M}_\ell)$ is absolutely
continuous with respect to the Lebesgue measure and, moreover, belongs to $L^2(\mathbb{T}, {\mathcal M}_\ell^{+})$. Then
there exist a subset $\Lambda^{''}\subset\Lambda^{'}$ and some $\omega^{\prime}$ in
the unit ball of $L^2(\mathbb{T}, {\mathcal M}_\ell^{+})$ such that
\begin{equation}\label{mainpaper523}
\begin{gathered}
d\nu^+ \eqbd *{-}\lim_{n\in\Lambda^{''}}d\nu_n^+=\omega^{\prime}\frac{d\theta}{2\pi}
\;,\qquad *{-}\lim_{n\in\Lambda^{''}}d\nu^{-}_n=d\nu^{-}\;,\\
d\nu \eqbd d\nu^+-d\nu^-=(\omega^{\prime}-(\nu^-)') \frac{d\theta}{2\pi}-d\nu^-_s,
\end{gathered}
\end{equation}
see \eqref{limmuminus5221}. Let $I$ be an open arc on $\mathbb{T}$ such that its endpoints do
not carry point masses of $d\nu^-_s$ or $d\sigma_s$. By matrix Jensen's
inequality  from Proposition \ref{prJensen}, we get
\begin{equation}\label{JenSz3.101}
\frac{1}{|I|}\int_{I}\log(f_n(e^{i\theta}))\frac{d\theta}{2\pi}\preceq
\log\left\{\frac{1}{|I|}\int_I f_n(e^{i\theta})\frac{d\theta}{2\pi}\right\}\;.
\end{equation}
Applying Helly's  Theorem \ref{Helley1} separately to
$\{\nu_n^+\}_{n\in\Lambda''}$ and to
$\{\nu_n^-\}_{n\in\Lambda''}$, we obtain
\begin{equation}\label{mainpap5261}
\lim_{n\in\Lambda''}\frac{1}{|I|}\int_{I}\log(f_n(e^{i\theta}))\frac{d\theta}{2\pi}=\frac{\nu(I)}{|I|}\;.
\end{equation}
Applying Helly's Theorem \ref{Helley1},
we derive from~\eqref{formula2} the inequality
\begin{equation}\label{mainpap5271}
\lim_{n}\frac{1}{|I|}\int_If_n(e^{i\theta})\frac{d\theta}{2\pi}\preceq \frac{\sigma(I)}{|I|}\;.
\end{equation}
A substitution of (\ref{mainpap5261}) and (\ref{mainpap5271}) into
(\ref{JenSz3.101}) results in the inequality
\[
\frac{\nu(I)}{|I|}\preceq \log\left(\frac{\sigma(I)}{|I|}\right)\;
\]
(here we use the operator continuity of the logarithm, see Proposition \ref{OpCont}). 
It follows from Lebesgue's
theorem on differentiation and the operator continuity of the logarithm that
\begin{equation}\label{LebDiff3.111}
\nu^{\prime}\preceq \log(\sigma^{\prime})
\end{equation}
almost everywhere on $\mathbb{T}$. In view of (\ref{LebDiff3.111})
and (\ref{formula3}), we obtain
\begin{equation}\label{MainPaper5291}
\log{\beta}+\nu^-_s(\mathbb{T})\preceq\int_{\mathbb{T}}d\nu+\nu^-_s(\mathbb{T})=
\int_{\mathbb{T}}\nu^{\prime}\frac{d\theta}{2\pi}
\preceq\int_{\mathbb{T}}\log\sigma^{\prime}\frac{d\theta}{2\pi}\;.
\end{equation}
Combining~\eqref{HalfEq} with (\ref{MainPaper5291}), we see that
\begin{equation}\label{logsigmaform85941}
\int_{\mathbb{T}}\tr\log\sigma^{\prime}\frac{d\theta}{2\pi}=\tr\log\beta.
\end{equation}
and $\tr\nu_s(\mathbb{T})=0$, so $\nu^-_s=0$ by the nonnegativity of the measure $\nu^-_s$. 
It follows that
\begin{equation*}\log{\beta}\preceq\int_{\mathbb{T}}\log\sigma^{\prime}\frac{d\theta}{2\pi}.
\end{equation*}
Since the traces of the operators on both sides are equal by \eqref{logsigmaform85941},
we  invoke Lemma~\ref{triv88} and conclude that
\begin{equation*}
\log{\beta}=\int_{\mathbb{T}}\log\sigma^{\prime}\frac{d\theta}{2\pi}.
\end{equation*}
Since $\log\beta$ is an arbitrary limit point of $\{\log\beta_n\}_{n\geq 0}$,
we obtain \eqref{matrixeq}.
\end{proof}
\begin{theorem}\label{secondth0}Let $\sigma\in\textsf{P}_{\ell}(\mathbb{T})$ satisfy $\sup_n\Vert\beta_n^{-1}\Vert<+\infty$. Then
\begin{equation*}
\lim_n\log\beta_n=\int_{\mathbb{T}}\log\sigma^{\,\prime}\frac{d\theta}{2\pi} .
\end{equation*}
\end{theorem}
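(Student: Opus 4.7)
The plan is to apply Theorem~\ref{main295} with the natural choice
\[
f_n(e^{i\theta}) \eqbd [\varphi_n^{R,*}(e^{i\theta})^\dagger\varphi_n^{R,*}(e^{i\theta})]^{-1},
\]
so that $f_n\,d\theta/(2\pi)=d\mu_n$ is the right Bernstein--Szeg\H{o} approximation introduced in~\eqref{eqszego3}. Since $\varphi_n^{R,*}$ is invertible on $\mathbb{T}$ by Theorem~\ref{PropMOP}(i), the function $f_n$ is continuous on $\mathbb{T}$ and takes values in the strictly positive matrices, so it lies in $C(\mathbb{T},{\mathcal M}_\ell^{+})$ with $f_n(e^{i\theta})\succ\mathbf{0}$.

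Next I would verify the three hypotheses \eqref{formula1}--\eqref{formula3} of Theorem~\ref{main295}. For \eqref{formula1}: by Theorem~\ref{BSconv}, each $d\mu_n$ is a normalized matrix-valued probability measure, so $\int_\mathbb{T} f_n\,d\theta/(2\pi)=\mu_n(\mathbb{T})=\mathbf{1}$. For \eqref{formula2}: again by Theorem~\ref{BSconv}, $*\text{-}\lim_{n}d\mu_n = d\sigma$, which certainly gives the inequality $*\text{-}\lim_n f_n\,d\theta/(2\pi)\preceq d\sigma$. For \eqref{formula3}: this is not merely an inequality but an equality by the very definition of $\beta_n$ in \eqref{an89}, namely
\[
\log\beta_n = \int_{\mathbb{T}}\log f_n(e^{i\theta})\,\frac{d\theta}{2\pi}.
\]
Thus all three conditions hold, and the standing hypothesis $\sup_n\Vert\beta_n^{-1}\Vert<+\infty$ is assumed in the statement.

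Applying Theorem~\ref{main295} directly yields
\[
\lim_{n\to\infty}\log\beta_n = \int_{\mathbb{T}}\log\sigma'\,\frac{d\theta}{2\pi}
\]
in ${\mathcal M}_\ell$, which is the desired conclusion. There is essentially no obstacle here beyond checking the hypotheses; the real work was already done in Theorem~\ref{main295}, whose proof carefully handled the compactness argument for the $*$-weak limit points of $\{\nu_n^\pm\}$, the Jensen-type inequality on small arcs, and the use of Lemma~\ref{triv88} to promote the Loewner inequality $\log\beta\preceq\int\log\sigma'\,d\theta/(2\pi)$ to equality via the trace identity from Theorem~\ref{firstineq}. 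The only point worth flagging is that the boundedness assumption $\sup_n\|\beta_n^{-1}\|<\infty$ is exactly what is needed to invoke Theorem~\ref{main295}, and it is guaranteed for Szeg\H{o} measures by Corollary~\ref{leftineqb}.
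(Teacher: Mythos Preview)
Your proof is correct and follows exactly the same approach as the paper: set $f_n=[\varphi_n^{R,*}(e^{i\theta})^\dagger\varphi_n^{R,*}(e^{i\theta})]^{-1}$, verify \eqref{formula1}--\eqref{formula2} via Theorem~\ref{BSconv} and \eqref{formula3} via the definition \eqref{an89}, then apply Theorem~\ref{main295}. The paper's version is simply terser.
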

\begin{proof}  Set
$$
f_n(e^{i\theta})=[\varphi_n^{R,*} (e^{i\theta})^\dagger \varphi_n^{R,*} (e^{i
\theta})]^{-1}
$$
in Theorem \ref{main295}. Then \eqref{formula1} and \eqref{formula2} follow from Theorem \ref{BSconv}. Finally, \eqref{formula3} follows from \eqref{an89}.
\end{proof}

\begin{theorem}[{\cite[Theorem 18]{DGK:78}}]\label{szego_measure} 
For any $\sigma\in\textsf{P}_{\ell}(\mathbb{T})$,
\begin{equation}\label{Szlogcond3.4}
\log\prod_{k=0}^{\infty}\det(1-\alpha_k^\dagger \alpha_k)=
\int_{\mathbb{T}}\tr\log\sigma^{\prime}\frac{d\theta}{2\pi}\;.
\end{equation}
\end{theorem}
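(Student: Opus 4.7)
The plan is to split the argument according to whether $\sigma$ is a Szeg\H{o} measure, and in each case reduce the statement to the matrix-valued convergence already delivered by Theorem~\ref{secondth0}, combined with the trace identity of Lemma~\ref{detid0}. Throughout, I will silently use the Sylvester identity $\det(\mathbf{1}-\alpha_k\alpha_k^\dagger)=\det(\mathbf{1}-\alpha_k^\dagger\alpha_k)$ to reconcile the two equivalent forms of the product.

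\textbf{Szeg\H{o} case.} Suppose $\int_{\mathbb{T}}\tr\log\sigma'\,\frac{d\theta}{2\pi}>-\infty$. Then Corollary~\ref{leftineqb} yields $\sup_n\Vert\beta_n^{-1}\Vert<+\infty$, so Theorem~\ref{secondth0} gives the matrix limit
\begin{equation*}
\lim_n\log\beta_n=\int_{\mathbb{T}}\log\sigma'\,\frac{d\theta}{2\pi}\quad\text{in }\mathcal{M}_\ell.
\end{equation*}
Taking traces of both sides (which is a continuous linear functional on $\mathcal{M}_\ell$) and then invoking Lemma~\ref{detid0} on the left, I obtain
\begin{equation*}
\lim_n\log\prod_{k=0}^{n-1}\det(\mathbf{1}-\alpha_k^\dagger\alpha_k)=\lim_n\tr\log\beta_n=\int_{\mathbb{T}}\tr\log\sigma'\,\frac{d\theta}{2\pi},
\end{equation*}
which is exactly \eqref{Szlogcond3.4}.

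\textbf{Non-Szeg\H{o} case.} Suppose instead $\int_{\mathbb{T}}\tr\log\sigma'\,\frac{d\theta}{2\pi}=-\infty$. The product $\prod_{k=0}^{n-1}\det(\mathbf{1}-\alpha_k^\dagger\alpha_k)$ is non-increasing in $n$ (each factor lies in $[0,1]$), so it has a limit in $[0,1]$, and \eqref{Szlogcond3.4} amounts to showing this limit is $0$. I will argue by contradiction: assume $\prod_{k=0}^{\infty}\det(\mathbf{1}-\alpha_k^\dagger\alpha_k)=c>0$. By Lemma~\ref{detid0}, $\det\beta_n\geq c$ for every $n$. Since all eigenvalues $\lambda^{(n)}_j$ of $\beta_n$ lie in $(0,1]$ (by \eqref{bettaop9}), we have $\min_j\lambda^{(n)}_j\geq \prod_j\lambda^{(n)}_j=\det\beta_n\geq c$, hence $\Vert\beta_n^{-1}\Vert=1/\min_j\lambda^{(n)}_j\leq 1/c$, uniformly in $n$. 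Theorem~\ref{secondth0} then applies and yields $\lim_n\log\beta_n=\int_{\mathbb{T}}\log\sigma'\,\frac{d\theta}{2\pi}$ in $\mathcal{M}_\ell$. Taking traces gives $\lim_n\tr\log\beta_n=\int_{\mathbb{T}}\tr\log\sigma'\,\frac{d\theta}{2\pi}=-\infty$, while Lemma~\ref{detid0} identifies the left-hand side as $\log c>-\infty$, a contradiction. Therefore $c=0$ and both sides of \eqref{Szlogcond3.4} equal $-\infty$.

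\textbf{Anticipated obstacle.} The only non-bookkeeping step is the implication ``positivity of the product $\Rightarrow$ uniform bound on $\Vert\beta_n^{-1}\Vert$'' in the non-Szeg\H{o} case. It is crucial here that the eigenvalues of $\beta_n$ are not merely positive but also bounded above by $1$, which is exactly the content of Lemma~\ref{neglemma7} and \eqref{bettaop9}; without the upper bound one could not pass from a determinantal lower bound to a spectral one. Everything else is a direct assembly of Corollary~\ref{leftineqb}, Theorem~\ref{secondth0}, and Lemma~\ref{detid0}.
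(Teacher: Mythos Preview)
Your proof is correct and follows essentially the same strategy as the paper: once $\sup_n\Vert\beta_n^{-1}\Vert<\infty$ is secured, invoke Theorem~\ref{secondth0} and take traces via Lemma~\ref{detid0}. The only differences are organizational: the paper splits cases on whether $\prod_k\det(\mathbf{1}-\alpha_k^\dagger\alpha_k)>0$ (using Theorem~\ref{firstineq} directly when the product vanishes, and passing through $\sum_k\Vert\alpha_k\Vert^2<\infty$ to bound $\Vert\beta_n^{-1}\Vert$ when it does not), whereas you split on whether $\sigma$ is Szeg\H{o} and obtain the spectral bound on $\beta_n^{-1}$ more directly from $\det\beta_n\geq c$ together with \eqref{bettaop9}.
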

\begin{proof} Since $\det(1-\alpha_k^\dagger\alpha_k)<1$ for all $k$, the sum of the series
\begin{equation*}
\sum_{k=0}^{\infty}\log\det(1-\alpha_k^\dagger\alpha_k)
\end{equation*}
with negative terms satisfies
\begin{equation}\label{pofght68re}
\sum_{k=0}^{\infty}\log\det(1-\alpha_k^\dagger\alpha_k)\geq\int_{\mathbb{T}}\tr\log\sigma^{\prime}\frac{d\theta}{2\pi}
\end{equation}
by Theorem \ref{firstineq}. We have two cases. If the series on the left-hand side of \eqref{pofght68re} diverges, then $\sigma$ is not a Szeg\"o measure and both sides of \eqref{Szlogcond3.4} equal $-\infty$. If the series on the left-hand side of \eqref{pofght68re} converges, then
\begin{equation*}
\lim_k\log\det(1-\alpha_k^\dagger\alpha_k)=\lim_k\text{tr}\log(1-\alpha_k^\dagger\alpha_k)=0.
\end{equation*}
Since the spectral norm $\|\cdot \|$ is the largest eigenvalue of a positive self-adjoint matrix,
it follows that
\begin{equation*}
\lim_k\Vert\alpha_k^\dagger\alpha_k\Vert=0.
\end{equation*}
Since $-x\geq\log(1-x)$ for $0<x<1$, we see that
\begin{equation*}
-\Vert\alpha_k^\dagger\alpha_k\Vert\geq \log(1-\Vert\alpha_k^\dagger\alpha_k\Vert)\geq \text{tr}\log(1-\alpha_k^\dagger\alpha_k),
\end{equation*}
implying that
\begin{equation*}
\sum_{k=0}^{\infty}\Vert\alpha_k^\dagger\alpha_k\Vert<+\infty.
\end{equation*}
Lemma \ref{detid0} implies  that the $\Vert\beta_n^{-1}\Vert$ are bounded. An
application of Theorem \ref{secondth0} now completes the proof.
\end{proof}
\begin{corollary}[{\cite[Theorem 19]{DGK:78}}]\label{parametersth}
 A measure $\sigma\in\textsf{P}_{\ell}(\mathbb{T})$ is a Szeg\"o measure if and only if
\begin{equation*}
\sum_{k=0}^{\infty}\Vert\alpha_k^\dagger\alpha_k\Vert<+\infty.
\end{equation*}
\end{corollary}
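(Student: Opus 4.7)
The plan is to reduce the statement to a purely scalar equivalence by invoking Theorem~\ref{szego_measure}. That theorem identifies the Szeg\H{o} condition $\int_{\mathbb{T}}\tr\log\sigma'\,d\theta/(2\pi)>-\infty$ with finiteness of the series $\sum_{k\ge 0}\log\det({\bf 1}-\alpha_k^\dagger\alpha_k)$, so what remains is to show that this scalar series converges if and only if $\sum_{k\ge 0}\|\alpha_k^\dagger\alpha_k\|<\infty$. The whole argument will come from sandwiching $-\log\det({\bf 1}-\alpha_k^\dagger\alpha_k)$ between two constant multiples of $\|\alpha_k^\dagger\alpha_k\|$, at least once $\|\alpha_k^\dagger\alpha_k\|$ is small.

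For the forward direction, suppose $\sigma$ is a Szeg\H{o} measure; this half is literally contained in the proof of Theorem~\ref{szego_measure}. Convergence of $\sum_k\log\det({\bf 1}-\alpha_k^\dagger\alpha_k)$ forces $\|\alpha_k^\dagger\alpha_k\|\to 0$, and the scalar bound $-x\ge\log(1-x)$ on $[0,1)$, applied to the largest eigenvalue of $\alpha_k^\dagger\alpha_k$, gives $\|\alpha_k^\dagger\alpha_k\|\le-\log(1-\|\alpha_k^\dagger\alpha_k\|)\le -\tr\log({\bf 1}-\alpha_k^\dagger\alpha_k)=-\log\det({\bf 1}-\alpha_k^\dagger\alpha_k)$, which is summable in $k$.

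For the reverse direction, assume $\sum_k\|\alpha_k^\dagger\alpha_k\|<\infty$. Then $\|\alpha_k^\dagger\alpha_k\|\le 1/2$ for all sufficiently large $k$, so each eigenvalue $\lambda_j^{(k)}$ of $\alpha_k^\dagger\alpha_k$ lies in $[0,1/2]$. The elementary inequality $-\log(1-x)\le 2x$ on $[0,1/2]$, applied eigenvalue-wise, yields
\begin{equation*}
-\log\det({\bf 1}-\alpha_k^\dagger\alpha_k)=-\sum_{j=1}^{\ell}\log\bigl(1-\lambda_j^{(k)}\bigr)\le 2\,\tr(\alpha_k^\dagger\alpha_k)\le 2\ell\,\|\alpha_k^\dagger\alpha_k\|.
\end{equation*}
Summing over $k$ bounds $-\sum_k\log\det({\bf 1}-\alpha_k^\dagger\alpha_k)$ by a finite quantity; Theorem~\ref{szego_measure} then certifies that $\sigma$ is a Szeg\H{o} measure.

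There is no real obstacle here. Both directions reduce—via the identity $\det({\bf 1}-\alpha_k^\dagger\alpha_k)=\det({\bf 1}-\alpha_k\alpha_k^\dagger)$ that reconciles the statement of the corollary with Lemma~\ref{detid0}—to two one-line scalar estimates for $-\log(1-x)$ near $x=0$. The only mildly delicate point is handling the dimension factor $\ell$ which enters through the trace--norm inequality, but this factor is harmless for summability.
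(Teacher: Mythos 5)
Your proof is correct, and it is worth noting that it is actually more complete than what the paper itself supplies: the paper proves only the forward direction (Szeg\H{o} $\Rightarrow$ summability), which is embedded in the proof of Theorem~\ref{szego_measure} via the chain $-\Vert\alpha_k^\dagger\alpha_k\Vert\geq \log(1-\Vert\alpha_k^\dagger\alpha_k\Vert)\geq \tr\log({\bf 1}-\alpha_k^\dagger\alpha_k)$ --- exactly your argument --- and then merely remarks that the converse ``can be obtained analogously to'' \cite{DGK:78} without writing it out. Your reverse direction fills that gap in the natural way: the elementary estimate $-\log(1-x)\leq 2x$ on $[0,1/2]$, applied eigenvalue-wise, gives $-\log\det({\bf 1}-\alpha_k^\dagger\alpha_k)\leq 2\ell\Vert\alpha_k^\dagger\alpha_k\Vert$ for large $k$, so the determinantal series converges, and the unconditional identity \eqref{Szlogcond3.4} of Theorem~\ref{szego_measure} then forces $\int_{\mathbb{T}}\tr\log\sigma'\,d\theta/(2\pi)>-\infty$. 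There is no circularity here, since Theorem~\ref{szego_measure} is established for arbitrary $\sigma\in\textsf{P}_\ell(\mathbb{T})$ before the corollary is invoked, and your appeal to $\det({\bf 1}-\alpha_k^\dagger\alpha_k)=\det({\bf 1}-\alpha_k\alpha_k^\dagger)$ correctly reconciles the two forms appearing in Lemma~\ref{detid0} and in the statement. In short: same route as the paper where the paper gives one, plus an explicit and correct treatment of the half the paper outsources to a reference.
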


\noindent
One direction of this corollary was already proved in Theorem~\ref{szego_measure}; the other
direction can be obtained analogously to~\cite{DGK:78}

\begin{corollary}\label{colSz3.1} Let $\sigma$ be a Szeg\"o measure
and let $\{\varphi_n\}_{n\geq 0}$ be the orthogonal polynomials in
$L^2(d\sigma)$. Then
\begin{equation}\label{weakconvSz3.14}
*{-}\lim_n d\mu_n= *{-}\lim_n\log\left([\varphi_n^{R,*} (e^{i\theta})^\dagger \varphi_n^{R,*} (e^{i
\theta})]^{-1}\right)\frac{d\theta}{2\pi}=\log(\sigma^{\prime})\frac{d\theta}{2\pi}
\end{equation}
in the weak topology of $M(\mathbb{T, {\mathcal M}_\ell})$.
\end{corollary}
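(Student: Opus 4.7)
The plan is to extract the weak convergence from the identification already present inside the proof of Theorem \ref{main295} (which is invoked through Theorem \ref{secondth0}). The leftmost equality in \eqref{weakconvSz3.14} is just the definition of $d\mu_n$ as $f_n\,d\theta/(2\pi)$ with density $f_n := [\varphi_n^{R,*}(e^{i\theta})^\dagger\varphi_n^{R,*}(e^{i\theta})]^{-1}$, cf.\ \eqref{eqszego3}; the substantive content is the $*$-weak convergence of the matrix measures $d\nu_n := \log(f_n)\,d\theta/(2\pi)$ to $\log(\sigma')\,d\theta/(2\pi)$ in $M(\mathbb{T},\mathcal{M}_\ell)$.

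I would argue by subsequence-uniqueness: show that every $*$-weak limit point of $\{d\nu_n\}$ equals $\log(\sigma')\,d\theta/(2\pi)$. Since $\sigma$ is a Szeg\H{o} measure, Corollary \ref{leftineqb} gives $\sup_n\|\beta_n^{-1}\|<\infty$, placing us in the setting of Theorem \ref{main295}. Pick any subsequence; decomposing $\log=\log^+-\log^-$ and $d\nu_n=d\nu_n^+-d\nu_n^-$, one uses the $L^2$-bound $(\log^+x)^2\le x$ together with Theorem \ref{Ltwoth}, as well as the $L^1$-bound $\int d\nu_n^-\preceq\mathbf{1}+\log\beta_n^{-1}$, to extract a further subsequence $\Lambda''$ along which $d\nu_n^\pm$ converge $*$-weakly to $d\nu^\pm$, with $d\nu^+=\omega'\,d\theta/(2\pi)$ absolutely continuous and $d\nu^-=(\nu^-)'\,d\theta/(2\pi)+d\nu_s^-$. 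In particular, $d\nu_n\to d\nu:=d\nu^+-d\nu^-$ along $\Lambda''$.

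The identification $d\nu=\log\sigma'\,d\theta/(2\pi)$ then follows from the chain of reasoning already inside the proof of Theorem \ref{main295}: arcwise matrix Jensen (Proposition \ref{prJensen}), Helly's Theorem \ref{Helley1}, operator continuity of $\log$ (Proposition \ref{OpCont}) and Lebesgue differentiation give the a.e.\ pointwise bound $\nu'\preceq\log\sigma'$, where $\nu:=\nu^+-\nu^-$. Combined with Theorem \ref{firstineq} and Theorem \ref{secondth0}, the chain
\[
\int_{\mathbb{T}}\log\sigma'\,\frac{d\theta}{2\pi}=\log\beta\preceq\int_{\mathbb{T}}d\nu+\nu_s^-(\mathbb{T})=\int_{\mathbb{T}}\nu'\,\frac{d\theta}{2\pi}\preceq\int_{\mathbb{T}}\log\sigma'\,\frac{d\theta}{2\pi}
\]
forces equalities throughout; taking traces and using nonnegativity yields $\nu_s^-=\mathbf{0}$, while $\log\sigma'-\nu'\succeq\mathbf{0}$ with integral $\mathbf{0}$ gives $\nu'=\log\sigma'$ a.e.\ by Lemma \ref{triv88} applied to each quadratic form $x^\dagger(\cdot)x$. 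Since the starting subsequence was arbitrary and $\{d\nu_n\}$ has uniformly bounded total variation from the same $L^2$- and $L^1$-estimates, the full sequence converges $*$-weakly.

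The principal obstacle is conceptual rather than computational: one must recognize that the proof of Theorem \ref{main295} already identifies the \emph{entire} subsequential $*$-weak limit of $d\nu_n$, not merely its total mass $\log\beta$; the corollary then amounts to packaging this observation through subsequence-uniqueness, with no additional estimates needed beyond those already established in Theorems \ref{firstineq}, \ref{main295}, and \ref{secondth0}.
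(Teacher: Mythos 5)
Your proposal is correct and follows essentially the same route as the paper: both rerun the proof of Theorem~\ref{main295} on the measures $d\nu_n^{\pm}=\log^{\pm}\bigl([\varphi_n^{R,*}(e^{i\theta})^\dagger\varphi_n^{R,*}(e^{i\theta})]^{-1}\bigr)\frac{d\theta}{2\pi}$, observe that the chain of inequalities there forces $\nu_s^-=\mathbf{0}$ and $\nu'=\log\sigma'$ a.e.\ (hence identifies the full subsequential $*$-weak limit, not just its total mass $\log\beta$), and conclude by arbitrariness of the limit point. Your only departures are cosmetic: you invoke Theorem~\ref{secondth0} to supply $\log\beta=\int_{\mathbb{T}}\log\sigma'\,\frac{d\theta}{2\pi}$ rather than re-deriving it from \eqref{HalfEq} and \eqref{MainPaper5291}, and you make explicit the uniform total-variation bound needed for the subsequence-uniqueness step, which the paper leaves implicit.
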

\begin{proof} 
Apply the proof of Theorem~\ref{main295} to the measures
\begin{eqnarray*}
 d\nu_n^+ \eqbd d\mu_n^+ \eqbd 
\log^+\left([\varphi_n^{R,*} (e^{i\theta})^\dagger \varphi_n^{R,*} 
(e^{i\theta})]^{-1}\right) \frac{d\theta}{2\pi},  \\
 d\nu_n^- \eqbd d\mu_n^- \eqbd 
\log^-\left([\varphi_n^{R,*} (e^{i\theta})^\dagger \varphi_n^{R,*} 
(e^{i\theta})]^{-1}\right) \frac{d\theta}{2\pi}.
\end{eqnarray*}
Taking \eqref{MainPaper5291} into account, we obtain
\begin{equation}\label{mainpape530}
\nu^{\prime}=\log\sigma^{\prime}\quad \text{  a.e. on  } \mathbb{T}\;.
\end{equation}
The substitution of (\ref{mainpape530}) into the last formula of
(\ref{mainpaper523}) results in
\[
d\nu=\log\sigma^{\prime}\frac{d\theta}{2\pi}=(\omega^{\prime}-(\nu^-)')\frac{d\theta}{2\pi}\;.
\]
Since $\omega$ in the proof of Theorem~\ref{main295} was an arbitrary $*$-limit point of
$\{\nu_n^{+}\}_{n\in\Lambda}$, this implies that
$*{-}\lim_{n\in\Lambda'}d\nu_n^+=\omega^{\prime}\frac{d\theta}{2\pi}$. Since $\nu^-$ was an
arbitrary $*$-limit point of $\{\nu_n^{-}\}_{n\in\Lambda}$, we
conclude that $*{-}\lim_{n}d\mu_n=\log\sigma^{\prime}\frac{d\theta}{2\pi}$.
\end{proof}

\section{The Helson-Lowdenslager Theorem}

Since $\mathbf{\Phi}_n^{R,*}$ is left orthogonal to $z\mathbf{1},\ldots,z^n\mathbf{1}$ (see \cite[Lemma 3.2]{DPS:08}), it is also left orthogonal to any linear combination $p$ of these matrix functions with the coefficients in ${\mathcal M}_\ell$. Take any such combination $p$. Then
\begin{multline*}
\ang{\mathbf{\Phi}_n^{R,*}-p,\mathbf{\Phi}_n^{R,*}-p}_L=
\ang{\mathbf{\Phi}_n^{R,*},\mathbf{\Phi}_n^{R,*}}_L+\ang{p,p}_L-\ang{\mathbf{\Phi}_n^{R,*},p}_L
-\ang{\mathbf{\Phi}_n^{R,*},p}_L^\dagger=
\ang{\mathbf{\Phi}_n^{R,*},\mathbf{\Phi}_n^{R,*}}_L+\ang{p,p}_L.
\end{multline*}
Since every polynomial $\mathbf{Q}$ satisfying $\mathbf{Q}(0)=\mathbf{1}$ is of the form $\mathbf{Q}=\mathbf{\Phi}_n^{R,*}-p$ we obtain the matrix inequality
\begin{equation}\label{varprincip}
\ang{\mathbf{\Phi}_n^{R,*},\mathbf{\Phi}_n^{R,*}}_L\preceq \ang{\mathbf{Q},\mathbf{Q}}_L.
\end{equation}
These facts are also derived for the real line in~\cite[Formula (2.10)]{DPS:08}.

It is therefore natural to call the square root of the positive matrix in the left-hand side of \eqref{varprincip} the {\em left operator distance\/} from $\mathbf{1}$ to $z\mathcal{P}_{n-1}$.
Consequently, the usual distance in the left Hilbert space is equal to
\begin{equation}\label{distform}
\text{dist}_L\left(\textbf{1},z\mathcal{P}_{n-1}\right)^2=\text{tr}\left(\ang{\mathbf{\Phi}_n^{R,*},\mathbf{\Phi}_n^{R,*}}_L\right).
\end{equation}
One easily verifies (see also \cite[Lemma 3.1]{DPS:08}) that
\begin{equation}\label{eval745}
\begin{array}{lcl} \ang{\mathbf{\Phi}_n^{R,*},\mathbf{\Phi}_n^{R,*}}_L & = & \ang{\mathbf{\Phi}_n^{R},\mathbf{\Phi}_n^{R}}_R^\dagger=\ang{\mathbf{\Phi}_n^{R},\mathbf{\Phi}_n^{R}}_R 
= (\kappa_n^{R})^{-\dagger}\ang{{\varphi}_n^{R},{\varphi}_n^{R}}_R(\kappa_n^{R})^{-1}
\\[2mm]
&=& (\kappa_n^{R})^{-\dagger}\mathbf{1}
(\kappa_n^{R})^{-1}=(\kappa_n^{R})^{-\dagger}(\kappa_n^{R})^{-1}.
\end{array}
\end{equation}
It follows from \eqref{kappaformula} that
\begin{equation}\label{distformula78}
((\kappa_n^{R})^{-\dagger}(\kappa_n^{R})^{-1})^{1/2} =\rho_{n-1}^R\cdots\rho_{0}^R=({\bf 1} -\alpha_{n-1} \alpha_{n-1}^\dagger)^{1/2}\cdots
({\bf 1} -\alpha_{0} \alpha_{0}^\dagger)^{1/2}
\end{equation}
is the left matrix distance from $\mathbf{1}$ to $z\mathcal{P}_{n-1}$.
\begin{corollary}The identity polynomial $\mathbf{1}$ is in the left closure of the sets of matrix polynomials $z\mathcal{P}_{n-1}$ if and only if
\begin{equation*}
\exp\int_{\mathbb{T}}\log\sigma^{\,\prime}\frac{d\theta}{2\pi}=\mathbf{0}.
\end{equation*}
\end{corollary}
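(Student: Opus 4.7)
The plan is to identify the squared left matrix distance from $\mathbf{1}$ to $z\mathcal{P}_{n-1}$ with a decreasing sequence of PSD matrices whose limit is $\exp\int\log\sigma'$, whereupon the corollary reduces to the trivial equivalence that a PSD matrix vanishes iff its trace vanishes.

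First, by~\eqref{varprincip} and~\eqref{eval745}, this squared distance equals
$$L_n\eqbd\ang{\mathbf{\Phi}_n^{R,*},\mathbf{\Phi}_n^{R,*}}_L=(\kappa_n^R)^{-\dagger}(\kappa_n^R)^{-1},$$
a PSD matrix bounded above by $\mathbf{1}$; enlarging the minimization class only shrinks the infimum, so $\{L_n\}$ decreases in the Loewner order and converges to some PSD limit $L$. Since the scalar distance is $\sqrt{\tr L_n}$, the identity polynomial $\mathbf{1}$ lies in the left closure of $\bigcup_n z\mathcal{P}_{n-1}$ iff $\tr L=0$ iff $L=\mathbf{0}$.

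Second, I would prove the matrix Helson--Lowdenslager identity $L=\exp\int\log\sigma'$. By Lemma~\ref{detid0} and~\eqref{eval745}, $\det L_n=\det\beta_n$, so in the Szeg\H{o} case Theorem~\ref{secondth0} forces $\det L=\det\exp\int\log\sigma'>0$, whence $L\succ\mathbf{0}$. To upgrade this to the matching full matrix equality, I would replay the proof of Theorem~\ref{main295} with the left inner product and with $L_n,\,\mathbf{\Phi}_n^{R,*}$ in place of $\beta_n,\,\varphi_n^{R,*}$; the required normalization $\int L_n^{-1/2}\mathbf{\Phi}_n^{R,*}\,d\sigma\,\mathbf{\Phi}_n^{R,*\dagger}L_n^{-1/2}=\mathbf{1}$ follows directly from $\ang{\mathbf{\Phi}_n^{R,*},\mathbf{\Phi}_n^{R,*}}_L=L_n$. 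The non-Szeg\H{o} case is recovered by regularization: replace $\sigma$ with $\sigma_\epsilon\eqbd(1-\epsilon)\sigma+\epsilon\tfrac{d\theta}{2\pi}\mathbf{1}$ (Szeg\H{o} for every $\epsilon>0$) and send $\epsilon\downarrow 0$, using monotone convergence for $\log(\sigma_\epsilon')\to\log\sigma'$ together with the operator continuity of $\log$ (Proposition~\ref{OpCont}).

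Once $L=\exp\int\log\sigma'$ is established, the corollary is immediate. The main obstacle is the regularization limit: one must confirm that $L^{(\epsilon)}\to L^{(0)}$ as $\epsilon\downarrow 0$, which relies on the stability of the Verblunsky parameters $\alpha_k(\sigma_\epsilon)$ in $\epsilon$, uniformly in $k$, provided by the weak-$*$ continuity of the Bernstein--Szeg\H{o} approximation from Theorem~\ref{BSconv}.
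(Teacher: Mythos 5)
Your first step is sound and agrees with the paper's setup: by \eqref{varprincip} and \eqref{eval745} the squared left matrix distance is $L_n=(\kappa_n^{R})^{-\dagger}(\kappa_n^{R})^{-1}$, it decreases in the Loewner order, and $\mathbf{1}$ lies in the left closure iff $\tr L_n\to 0$ iff $L_n\to\mathbf{0}$. The gap is your second step. The identity $L:=\lim_n L_n=\exp\int_{\mathbb{T}}\log\sigma^{\,\prime}\,\frac{d\theta}{2\pi}$ is false in general, and the paper is organized around precisely this failure: the quantity that converges to $\exp\int\log\sigma^{\,\prime}$ is $\beta_n=\exp\int\log([\varphi_n^{R,*\,\dagger}\varphi_n^{R,*}]^{-1})\frac{d\theta}{2\pi}$, and $\beta_n$ coincides with $L_n$ only when the logarithm under the integral splits, essentially the commuting normal case of Lemma~\ref{directcomp}. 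The Remark after Lemma~\ref{detid0} states explicitly that $\log([\varphi_n^{R,*\,\dagger}\varphi_n^{R,*}]^{-1})$ is in general not the boundary restriction of a harmonic matrix function, so the integral cannot be evaluated at $z=0$ to produce $L_n$. What holds for every $n$ is only $\det L_n=\det\beta_n$ (Lemma~\ref{detid0}), which is exactly why the Helson--Lowdenslager theorem (Theorem~\ref{HL}) is stated in determinant/trace form with the extra infimum over $\det A=1$ rather than as a matrix identity for $L$. Your proposed repair --- replaying Theorem~\ref{main295} with $L_n$ in place of $\beta_n$ --- breaks at hypothesis \eqref{formula3}: with $f_n=[\varphi_n^{R,*\,\dagger}\varphi_n^{R,*}]^{-1}$ you would need $\log L_n\preceq\int\log f_n\,\frac{d\theta}{2\pi}=\log\beta_n$, and since $\tr\log L_n=\tr\log\beta_n$, Lemma~\ref{triv88} would then force $L_n=\beta_n$, which is what fails.

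Because your argument ultimately controls only determinants, it cannot distinguish, for $\ell\geq 2$, between $L=\mathbf{0}$ and $L$ singular but nonzero (e.g.\ a direct sum of a non-Szeg\H{o} and a Szeg\H{o} scalar measure gives $L=\mathrm{diag}(0,c)$ with $c>0$, and correspondingly $\exp\int\log\sigma^{\,\prime}=\mathrm{diag}(0,c)\neq\mathbf{0}$). The paper instead presents the corollary as an immediate consequence of \eqref{distform}--\eqref{distformula78} together with the main formula \eqref{theMainFormula}, without claiming any outer-factorization-type identity for $L$ itself. Two secondary issues: the family $(1-\epsilon)\sigma^{\,\prime}+\epsilon\mathbf{1}$ is not monotone in $\epsilon$ in the Loewner order, so monotone convergence does not apply as stated; and Theorem~\ref{BSconv} provides no uniform-in-$k$ stability of the parameters $\alpha_k(\sigma_\epsilon)$, so the interchange of limits $\epsilon\downarrow 0$ and $n\to\infty$ is unjustified.
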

The distance formula \eqref{distformula78} is useful if the parameters $\{\alpha_k\}_{k\geq0}$ of $\sigma$ are known. If this is not the case, then one can apply an estimate for $(\kappa_n^{R})^{-1}$ from below which was obtained by Helson-Lowdenslager in \cite{HelsonLowdenslager}.

Now we are in a position to prove the main result of \cite{HelsonLowdenslager}.
\begin{theorem}[\cite{HelsonLowdenslager}]\label{HL} For every $\sigma\in\textsf{P}_{\ell}(\mathbb{T})$
\begin{equation}\label{HLeq}
\exp\int_{\mathbb{T}}\frac{1}{\ell}\text{tr}\log\sigma^{\,\prime}\,\frac{d\theta}{2\pi}=
\inf\limits_{A,P}\int_{\mathbb{T}}\frac{1}{\ell}\tr\left[(A+P)^\dagger
d\sigma(A+P)\right],
\end{equation}
where $A$ runs over all matrices with determinant one, and $P$ over
all trigonometric polynomials of the form
\[
P(e^{i\theta})=\sum_{k>0}A_ke^{ik\theta}.
\]
\end{theorem}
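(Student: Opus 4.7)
The plan is to recognize the right-hand side of \eqref{HLeq} as a two-stage variational problem, solve it at each polynomial degree using a right analogue of \eqref{varprincip}, and pass to the limit using Theorem~\ref{szego_measure}. Setting $Q := A + P$, we have $Q(0) = A$ and the integrand equals $\tr\ang{Q,Q}_R/\ell$. So the plan is, for each $n$, first to minimize $\tr\ang{Q,Q}_R$ over polynomials $Q$ of degree at most $n$ with $Q(0) = A$ for $A$ fixed (with $\det A = 1$), and then to minimize the result over $A$.

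For the inner minimization, I would first verify that $\Phi_n^{L,*}$ is right-orthogonal to $z\mathbf{1},z^2\mathbf{1},\ldots,z^n\mathbf{1}$. This follows from the left-orthogonality \eqref{leftmatrixpoly} of $\Phi_n^L$ together with the identity $\Phi_n^{L,*}(z)^\dagger = z^{-n}\Phi_n^L(z)$ valid on $\mathbb{T}$. Any admissible $Q$ then decomposes as $Q = \Phi_n^{L,*}A + R$ with $R \in z\mathcal{P}_{n-1}$, giving, by this right-orthogonality,
\begin{equation*}
\ang{Q,Q}_R \;=\; A^\dagger C_n A + \ang{R,R}_R \;\succeq\; A^\dagger C_n A,\qquad C_n := \ang{\Phi_n^{L,*},\Phi_n^{L,*}}_R,
\end{equation*}
with equality attained when $R = 0$. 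Since $\det A^\dagger = \overline{\det A} = 1$, the substitution $B := A^\dagger$ is a bijection of $\{A:\det A = 1\}$ to itself, so Lemma~\ref{HL-Lemma} yields
\[
\inf_{\det A=1}\frac{1}{\ell}\tr(A^\dagger C_n A)\;=\;\inf_{\det B=1}\frac{1}{\ell}\tr(BC_nB^\dagger)\;=\;[\det C_n]^{1/\ell}.
\]

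The computation of $\det C_n$ would mirror \eqref{eval745}--\eqref{distformula78}. Using $\Phi_n^{L,*}(z)^\dagger = z^{-n}\Phi_n^L(z)$ on $\mathbb{T}$ one obtains $C_n = \ang{\Phi_n^L,\Phi_n^L}_L = (\kappa_n^L)^{-1}(\kappa_n^L)^{-\dagger}$; then by \eqref{kappaformula} and \eqref{rhoalpha4},
\begin{equation*}
\det C_n \;=\; |\det\kappa_n^L|^{-2} \;=\; \prod_{k=0}^{n-1}\det(\mathbf{1} - \alpha_k^\dagger\alpha_k).
\end{equation*}
Since every trigonometric polynomial $P$ of the required form has finite degree, the level-$n$ infima form a non-increasing sequence whose limit coincides with the global infimum. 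Letting $n\to\infty$ and invoking the trace Szeg\H{o} identity Theorem~\ref{szego_measure} gives
\begin{equation*}
\inf_{A,P}\frac{1}{\ell}\tr\!\!\int_{\mathbb{T}}(A+P)^\dagger d\sigma(A+P) \;=\; \Bigl[\prod_{k=0}^{\infty}\det(\mathbf{1}-\alpha_k^\dagger\alpha_k)\Bigr]^{1/\ell} \;=\; \exp\int_{\mathbb{T}}\frac{1}{\ell}\tr\log\sigma^{\,\prime}\,\frac{d\theta}{2\pi},
\end{equation*}
as claimed.

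The main obstacle I anticipate is bookkeeping: the paper develops the variational identity and distance formula only on the \emph{left} (for $\ang{\cdot,\cdot}_L$ and $\Phi_n^{R,*}$), while \eqref{HLeq} is naturally a right-sided problem. Thus the right-orthogonality of $\Phi_n^{L,*}$, the identity $C_n = (\kappa_n^L)^{-1}(\kappa_n^L)^{-\dagger}$, and the accompanying determinant computation must all be re-derived in the mirrored setting. Each step is a routine adaptation of its left counterpart, but the proof must be organized so that the ``right'' machinery really is applied at each stage.
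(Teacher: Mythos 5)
Your proof is correct and follows essentially the same route as the paper: an orthogonal decomposition reducing the inner minimization to $\inf_{\det A=1}\tr(A^\dagger C_n A)/\ell$, Lemma~\ref{HL-Lemma} to evaluate that infimum as $[\det C_n]^{1/\ell}$, the $\kappa_n$-formula for the determinant, and Theorem~\ref{szego_measure} to pass to the limit. The only real difference is the mirroring you yourself flag: you carry the argument out on the right (matching the statement's integrand $(A+P)^\dagger\, d\sigma\,(A+P)$), whereas the paper invokes the left variational principle \eqref{varprincip} and the left distance formula \eqref{distformula78}, working with $(A+P)\,d\sigma\,(A+P)^\dagger$; the two are reconciled by $\det(\mathbf{1}-\alpha_k\alpha_k^\dagger)=\det(\mathbf{1}-\alpha_k^\dagger\alpha_k)$.
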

\begin{proof}

Combining Lemma \ref{HL-Lemma} with formula \eqref{distformula78}, we get
\begin{eqnarray*}
&& \inf_{A\in\mathcal{A}}
\frac{1}{\ell}\text{tr}
 A \left( (\kappa_n^R)^{-\dagger} 
 (\kappa_n^R)^{-1}   \right) A^\dagger  =
[\det((\kappa_n^{R})^{-\dagger} (\kappa_n^R)^{-1})  ]^{1/\ell} \\ && =
\exp\left\{\frac{1}{\ell}\sum_{j=0}^{n-1}\log\det(1-\alpha_j\alpha_j^\dagger)\right\}=
\exp\left\{\frac{1}{\ell}\text{tr}(\log\beta_n)\right\}.
\end{eqnarray*}
Combining this formula with \eqref{distform}, we obtain
\begin{eqnarray*}
\inf\limits_{A\in\mathcal{A},P\in z\mathcal{P}_{n-1}}\int_{\mathbb{T}}\frac{1}{\ell}\tr\left[(A+P)
d\sigma(A+P)^\dagger\right]=\inf\limits_{A\in\mathcal{A},P\in z\mathcal{P}_{n-1}}
\int_{\mathbb{T}}{1\over \ell}\tr A (\mathbf{1}+zA^{-1}P) d \sigma 
(\mathbf{1}+zA^{-1}P)^\dagger A^\dagger   \\
 = \inf_{\mathcal{A}} {1\over \ell} \tr A \left( (\kappa_n^R)^{-\dagger} 
 (\kappa_n^R)^{-1}   \right) A^\dagger  =
\exp\left\{\frac{1}{\ell}\int_{0}^{2\pi} \text{tr}\log([\varphi_n^{R,*} 
(e^{i\theta})^\dagger \varphi_n^{R,*} (e^{i\theta})]^{-1})\frac{d\theta}{2\pi}\right\}.
\end{eqnarray*}
Passing to the limit, we arrive at \eqref{HLeq}, which was initially proved via a different method in~\cite[Theorem~8]{HelsonLowdenslager}.
\end{proof}

\section*{Acknowledgments}
The research leading to these results was carried out at Technische
Universit\"at Berlin and has received funding from the
Alexander von Humboldt Foundation under the Sofja Kovalevskaja Prize Programme
and from the European Research Council under the European Union's Seventh
Framework Programme (FP7/2007-2013) / ERC grant agreement ${\rm n}^\circ$ 259173.

\end{document}